\newtheorem{thm}{Theorem}[section]
\newtheorem{trm}[thm]{Theorem}
\newtheorem{prop}[thm]{Proposition}
\newtheorem{lma}[thm]{Lemma}
\theoremstyle{definition}
\newtheorem{defi}[thm]{Definition}
\newtheorem{ex}[thm]{Example}
\theoremstyle{remark}
\newtheorem{rem}[thm]{Remark}
\let\c@equation\c@thm
\numberwithin{equation}{section}
\newcommand{\C}{\mathbb{C}}
\newcommand{\Z}{\mathbb{Z}}
\newcommand{\N}{\mathbb{N}}
\newcommand{\Kbb}{\Bbbk}
\newcommand{\f}{\textbf}
\newcommand{\mr}{\mathrm}
\newcommand{\mb}{\mathbf}
\newcommand{\bdim}{\mathbf{dim}\,}
\newcommand{\fa}{ \ \mathrm{for}\ \mathrm{all}\ }
\newcommand{\Hom}{\mathrm{Hom}}
\title[Degenerate Affine Flag Varieties and Quiver Grassmannians]{Degenerate Affine Flag Varieties and Quiver Grassmannians}
\author[Alexander P\"{u}tz]{Alexander P\"{u}tz}
\address{Ruhr-University Bochum, Faculty of Mathematics, Universit\"{a}tsstrasse 150, 44780 Bochum (Germany)}
\email{alexander.puetz@rub.de}
\begin{document}

\begin{abstract}
We study finite dimensional approximations to degenerate versions of affine flag varieties using quiver Grassmannians for cyclic quivers. We prove that they admit cellular decompositions parametrized by affine Dellac configurations, and that their irreducible components are normal Cohen-Macaulay varieties with rational singularities.
\end{abstract}

\maketitle

\section*{Introduction}
\pagenumbering{arabic}
This work is based on the identification of the degenerate type A flag variety with a quiver Grassmannian for the equioriented quiver of type A as shown by G.~Cerulli Irelli, E.~Feigin and M.~Reineke in \cite{CFR2012}. Additionally, it is based on a similar construction, using quiver Grassmannians for the loop quiver, giving finite approximations of the degenerate affine Grassmannian as introduced by E.~Feigin, M.~Finkelberg and M.~Reineke in \cite{FFR2017}. We generalise their constructions to describe finite approximations of degenerate affine flag varieties, using quiver Grassmannians for the equioriented cycle. Linear degenerations of the affine flag variety are defined similarly to the construction for the type A flag variety by G.~Cerulli Irelli, X.~Fang, E.~Feigin, G.~Fourier and M.~Reineke \cite{CFFFR2017}.

Quiver Grassmannians were first used by W.~Crawley-Boevey and A.~Schofield \cite{Crawley1989,Schofield1992}. In some special cases quiver Grassmannians for the equioriented cycle were studied by N.~Haupt \cite{Haupt2011,Haupt2012}. The variety of representations of the cycle was studied by G.~Kempken \cite{Kempken1982}. J.~Sauter studied the quiver flag variety for the equioriented cycle \cite{Sauter2017}. The Ringel-Hall algebra of the cyclic quiver was studied by A.~Hubery \cite{Hubery2010}. Based on the work by G.~Kempken and A.~Hubery we derive statements about the geometry of quiver Grassmannians for the equioriented cycle and generalise a result by N.~Haupt about the parametrisation of their irreducible components.

The main goal of this paper is the study of degenerate affine flag varieties of type $\mathfrak{gl}_n$. Analogous to the classical setting the affine flag variety is defined as the quotient of the affine Kac-Moody group by its standard Iwahori subgroup \cite[Chapter XIII]{Kumar2002}. Based on the identification of the finite approximations with quiver Grassmannians for the equioriented cycle, we examine geometric properties of the degenerations.

\subsection*{Main Results}
Finite approximations of the degenerate affine flag variety are isomorphic to certain quiver Grassmannians for the cyclic quiver, as shown in Theorem~\ref{trm:finite_approx-in-the-intro}. The approximations have equidimensional irreducible components, which are parametrised by grand Motzkin paths, and the components are normal, Cohen-Macaulay and have rational singularities (Theorem~\ref{trm:geom-prop-aff-flag-in-the-intro}). Moreover, certain quiver Grassmannians for the cycle admit a cellular decomposition, into attracting sets of the fixed points of an action by a one-dimensional torus (Theorem~\ref{trm:cell_decomp-approx-lin-deg-aff-flag}). The cells in the approximations of the degenerate affine flag are parametrised by affine Dellac configurations, as shown in Theorem~\ref{trm:bij-cells-aff-delllac}.
\subsection*{Methods and Structure}
To utilise the approximations via quiver Grassmannians, it is necessary to understand the quiver Grassmannians for the oriented cycle and their geometric properties. The link between representations of quivers and modules over finite dimensional algebras is the foundation of the realisation of quiver Grassmannians as framed module spaces. This interpretation of quiver Grassmannians allows us to translate properties between the variety of quiver representations and the quiver Grassmannian. Some basic definitions and required constructions are recalled in Section~\ref{sec:quiver-rep}.

In Section~\ref{sec:deg-aff-flag}, we define the affine flag variety of type $\mathfrak{gl}_n$ and its degeneration and state the main theorems of this article. The rest of the article is devoted to prove and generalise these statements.

The equioriented cycle, and the class of quiver Grassmannians which we want to examine, are introduced Section~\ref{sec:quiver-grass-cycle}. Based on word combinatorics we prove a dimension formula for the space of morphisms between nilpotent indecomposable representations of the cycle. This is applied to the parametrisation of irreducible components. Moreover it reveals certain favourable geometric properties of the quiver Grassmannians for the cycle and of the approximations of the degenerate affine flag variety, as claimed in Theorem~\ref{trm:geom-prop-aff-flag-in-the-intro}. The proof of the geometric properties utilises the construction of the quiver Grassmannian as framed moduli space. Hence they lift from the variety of quiver representations, which was studied by G.~Kempken in \cite{Kempken1982}.

In Section~\ref{sec:cell-decomp}, we introduce a $\C^*$-action on the quiver Grassmannians for the equioriented cycle. This action provides us with a combinatorial tool, to compute the Euler characteristic of these quiver Grassmannians, which was introduced by G.~Cerulli Irelli in \cite[Theorem~1]{Cerulli2011}. Moreover, it induces a cellular decomposition of the quiver Grassmannians. For the proof that such a decomposition exists, it is crucial to find the right grading for the basis of the vector spaces belonging to a quiver representation. This grading defines a torus action on the quiver representation and induces a torus action on the quiver Grassmannian. 

For the degenerate affine flag variety, the cells are in bijection with affine Dellac configurations. This bijection is established in Section~\ref{sec:aff-dellac} and is based on the parametrisation of torus fixed points by successor closed subquivers \cite[Proposition~1]{Cerulli2011}. Finally, in Section~\ref{sec:lin-deg-flag}, we generalise the constructions from the previous sections to certain linear degenerations of the affine flag variety and examine their geometry.

The class of quiver Grassmannians studied in this paper has some rather strong restrictions. But deviating from the Dynkin setting, these restrictions are necessary to obtain the results expected from the classical setting. If we drop the restrictions which keep up the analogy, it is hard to say anything about the geometry of the quiver Grassmannians. Only the existence of a cellular decomposition is rather general among the quiver Grassmannians for the cycle. It would be interesting to find out more about the geometry of the linear degenerations, as introduced in Section~\ref{sec:lin-deg-flag}, but this requires new methods, since the identification of the approximations with a framed moduli space fails in this setting. 
\section{Representations of Quivers}\label{sec:quiver-rep}
In this section we recall some definitions and results about quiver representations which are required to examine properties of approximations of the degenerate affine flag variety. Fix an algebraically closed field $\Kbb$. Let $Q$ be a finite quiver with a finite set of vertices $Q_0$, a finite set of edges $Q_1$ between the vertices and two maps $s,t : Q_1 \to Q_0$ providing an orientation of the edges with source $s_\alpha$ and target $t_\alpha$ for all $\alpha \in Q_1$. A $Q$-representation $R$ is a pair of tuples $R=(V,M)$, with a tuple of $\Kbb$-vector spaces over the vertices $V=(V_i)_{ i \in Q_0}$, and a tuple of linear maps between the vector spaces along the arrows of the underlying quiver $M = (M_\alpha )_{\alpha \in Q_1}$.  

The category of finite dimensional $Q$-representations over the field $\Kbb$ is $\mathrm{rep}_\Kbb(Q)$. A morphism $\psi$ of $Q$-representations $R=(V,M)$ and $S=(U,N)$ is a collection of linear maps $\psi_i : V_i \to U_i$ such that $\psi_j \circ M_\alpha = N_\alpha \circ \psi_j$ for all edges $\alpha : i \to j$. The set of all $Q$-morphisms from $R$ to $S$ is denoted by $\mathrm{Hom}_Q(R,S)$. Its dimension is abbreviated as $[R,S]:= \dim \mathrm{Hom}_Q(R,S)$.
\subsection{Quiver Grassmannians}
The entries of the dimension vector $\bdim R \in \Z^{Q_0}$ of a quiver representation $R$ are given by the dimension of the vector spaces $V_i$ over the vertices of the quiver. A subrepresentation $S \subseteq R$ is parametrised by a tuple of vector subspaces $U_i \subset V_i$, which is compatible with the maps between the vector spaces of the representation $R$, i.e. for all arrows $\alpha : i \to j$ of $Q$ we have $M_\alpha(U_i) \subseteq U_j$. 
\begin{defi}
The \textbf{quiver Grassmannian} $\mr{Gr}_{\mb{e}}^Q(M)$ is the set of all subrepresentations, with dimension vector $\mb{e} \in \Z^{Q_0}$, of the $Q$-representation $M$.
\end{defi}
In \cite{CFR2012} the stratum of $U \in \mr{Gr}^{\Delta_n}_{\mb{e}}(M)$ is defined as the isomorphism class of $U$ in the quiver Grassmannian. By \cite[Lemma 2.4]{CFR2012}, it is irreducible, locally closed and of dimension
\[ \dim \mathcal{S}_U = [ U,M ]- [ U,U].\] 
Hence the irreducible components of the quiver Grassmannian are given by the closures of the strata which are not contained in the closure of any other stratum.
\subsection{Path Algebras and Bounded Representations}
A path in a quiver is the concatenation of successive arrows. The path algebra $A:=\Kbb Q$ of the quiver $Q$ is the $\Kbb$-algebra with all paths in $Q$ as basis and multiplication of paths is defined via concatenation \cite[Definition 4.5]{Schiffler2014}. A quiver representation is called indecomposable if it can not be written as direct sum of two non-zero quiver representations. Every finite dimensional quiver representation has a decomposition into indecomposable representations which is unique up to the order of the summands \cite[Theorem~1.11]{Kirillov2016}. $Q$ has finitely many indecomposable representations if and only if the underlying graph of $Q$ is a simply-laced Dynkin diagram \cite{Gabriel1972}. This implies that the path algebra is finite dimensional. 

For arbitrary quivers we have to restrict to representations of bound quivers, i.e. representations of $Q$ such that the maps of the representation satisfy relations from an admissible ideal $\mr{I}$ in the path algebra $\Kbb Q$. Then $A_\mr{I} := \Kbb Q/\mr{I}$ is finite dimensional and in particular there are only finitely many indecomposable representations of the bound quiver $(Q,\mr{I})$ \cite[Theorem~5.4]{Schiffler2014}.
\subsection{Variety of Quiver Representations}
For a dimension vector $\mb{e} \in \Z^{Q_0}$, define the variety of $Q$-representations as
\[ \mr{R}_\mb{e}(Q) := \bigoplus_{(\alpha: i \to j) \in Q_1} \mathrm{Hom}\big(\Kbb^{e_i},\Kbb^{e_j}\big).\] 
This generalises to representations of bound quivers if we assume that the maps satisfy admissible relations $\mr{I}$ and we write $\mr{R}_\mb{e}(Q,\mr{I})$. On points $M$ in both varieties an element $g$ of the group \[\mr{G}_\mb{e} := \prod_{i \in Q_0}\mr{GL}_{e_i}(\Kbb) \] acts via conjugation, i.e.
\[ g.M := \Big(g_j M_\alpha g_i^{-1} \Big)_{(\alpha :i\to j) \in Q_1}. \]
The dimension of an orbit $\mathcal{O}_M := \mr{G}_\mb{e}.M$ is $\dim \mathcal{O}_M = \dim \mr{G}_\mb{e} - [M,M]$. If we fix a basis for the vector spaces over vertices of $Q$, the isomorphism classes of $Q$-representations are exactly the $\mr{G}_\mb{e}$-orbits. For $M,N \in \mr{R}_\mb{e}(Q,\mr{I})$, we write $N \geq M$ if $\mathcal{O}_N \subseteq \overline{\mathcal{O}_M}$ and say $M$ degenerates to $N$. 
\subsection{Simple, Projective and Injective Representations}
The simple representation $S_i$ of $Q$ has a one-dimensional $\Kbb$-vector space over the $i$-th vertex. All other vector spaces and the maps along the arrows are zero. The projective representation $P_k$ has a vector space with a basis indexed by paths from $k$ to $i$ over the $i$-th vertex. The maps along the arrows are determined by concatenation of the arrows with the paths labelling the basis. Analogously, the injective representation $I_k$ has a vector space with a basis indexed by paths from $j$ to $k$ over the $j$-th vertex. The maps along the arrows are determined by the factoring of paths through the arrows. For bound quivers, one has to work with the equivalence classes of paths in the bounded path algebra \cite[Definition~5.3]{Schiffler2014}.
\subsection{Coefficient Quivers}
Let $R=(V,M)$ be a finite dimensional representation of a finite quiver $Q$ and $\mb{d}$ its dimension vector. The coefficient quiver $Q(R)$ has one vertex for each basis element $v^{(i)}_k$ with $k \in [d_i]$ of the vector spaces $V_i$ for $i \in Q_0$. We draw an arrow from a vertex $v^{(i)}_k$ to a vertex $v^{(j)}_\ell$ if $\alpha : i \to j \in Q_1$ and the coefficient of $v^{(j)}_\ell$ in $M_\alpha(v^{(i)}_k)$ is non-zero. A subquiver in $Q(R)$ is called successor closed if for all vertices in the subquiver, their image along the arrows of $Q(R)$ is also contained in the subquiver.
\subsection{Framed Moduli Interpretation of Quiver Grassmannians}\label{sec:framed-moduli-int}
The \f{extended representation variety} is defined as
\[ \mr{R}_{\mb{e},\mb{d}}(Q,\mr{I}) := \mr{R}_{\mb{e}}(Q,\mr{I}) \times \Hom_\Kbb(\mb{e},\mb{d}) \]
\begin{defi}
A point $(M, f )$ of $\mr{R}_{\mb{e},\mb{d}}(Q,\mr{I})$ is called \f{stable} if there is no
non-zero subrepresentation $U$ of $M$, which is contained in $\mr{Ker}f \subseteq M$. The set of all stable points of $\mr{R}_{\mb{e},\mb{d}}(Q,\mr{I})$ is denoted by $\mr{R}^s_{\mb{e},\mb{d}}(Q,\mr{I})$. 
\end{defi}
\begin{trm}
\label{trm:framed-module}
Let $Q$ be a finite connected quiver and $\mr{I}$ an admissible ideal of the path algebra $\Kbb Q$. The indecomposable injective representation of the bound quiver $(Q,\mr{I})$ ending at vertex $j \in Q_0$ is denoted by $I_j$. Then
\[ \mr{Gr}_{\mb{e}}^{Q}(J) \quad \cong \quad \mr{M}^s_{\mb{e},\mb{d}}(Q,\mr{I}), \]
where 
\[ J := \bigoplus_{j \in Q_0} I_j \otimes \Kbb^{d_j}, \]
and $\mr{M}^s_{\mb{e},\mb{d}}(Q,\mr{I})$ is the geometric quotient of $\mr{R}^s_{\mb{e},\mb{d}}(Q,\mr{I})$ by the group $\mr{G}_{\mb{e}}$.
\end{trm}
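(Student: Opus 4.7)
The plan is to realise the isomorphism as the map sending a stable framed pair $(M,f)$ to the image of its frame inside $J$. The algebraic heart of the argument is the natural identification
\[ \Hom_Q(M, I_j) \cong M_j^* \]
for every bound representation $M$, which is the defining property of the indecomposable injective $I_j$ attached to vertex $j$ in the module category of $\Kbb Q/\mr{I}$ (concretely, via $I_j \cong D(Ae_j)$ and the standard duality adjunction). Taking direct sums with multiplicities, this upgrades to a canonical, functorial isomorphism
\[ \Hom_Q(M, J) \cong \Hom_\Kbb(\mb{e}, \mb{d}). \]
Hence the framing datum $f$ at a point of the extended representation variety is equivalent to a morphism of quiver representations $\tilde f \colon M \to J$, and since $\mr{Ker}\,\tilde f$ is itself a subrepresentation of $M$ contained in $\mr{Ker}\, f$, the stability of $(M,f)$ is equivalent to $\tilde f$ being injective.

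Second, I would define the morphism
\[ \Phi \colon \mr{R}^s_{\mb{e},\mb{d}}(Q,\mr{I}) \longrightarrow \mr{Gr}_\mb{e}^Q(J), \qquad (M,f) \longmapsto \tilde f(M), \]
and show that its fibres are exactly the $\mr{G}_\mb{e}$-orbits. Surjectivity is immediate: any $U \in \mr{Gr}_\mb{e}^Q(J)$ is recovered, after choosing bases of the vector spaces $U_i$, from the pair $(M,\tilde f)$ in which $M$ is the expression of $U$ in those bases and $\tilde f$ is the inclusion; two such reconstructions differ precisely by a change of basis on $M$, giving $\mr{G}_\mb{e}$-invariance of $\Phi$ with orbits as fibres. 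The action is moreover free on the stable locus, because any $g \in \mr{G}_\mb{e}$ fixing $(M,f)$ induces an automorphism of $M$ commuting with the injection $\tilde f$, hence is the identity.

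Finally, to upgrade this orbit-wise bijection to an isomorphism of varieties, I would invoke King's construction of moduli of modules over finite dimensional algebras (adapted to the quiver setting by Reineke): the stability used here coincides with GIT-stability for a suitable character of $\mr{G}_\mb{e}$, so the geometric quotient $\mr{M}^s_{\mb{e},\mb{d}}(Q,\mr{I})$ exists and the descended morphism $\overline{\Phi}$ is bijective on closed points. To promote it to an isomorphism I would produce local sections of $\Phi$ exhibiting the principal $\mr{G}_\mb{e}$-bundle structure: over the standard chart of $\mr{Gr}_\mb{e}^Q(J)$ consisting of subrepresentations transverse to a fixed complement of a reference point, subrepresentations are cut out by explicit linear equations in terms of that complement, providing a section and hence a local trivialisation. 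The main obstacle lies in this last passage from a bijective morphism to an isomorphism: while the set-theoretic bijection follows transparently from the universal property of $J$, the scheme-theoretic identification requires one to construct the local sections globally and verify that they remain compatible with the admissible relations encoded by $\mr{I}$.
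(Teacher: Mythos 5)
Your proposal is correct and follows essentially the same route as the proofs the paper relies on: the paper does not prove Theorem~\ref{trm:framed-module} itself but cites Reineke \cite[Proposition~3.9]{Reineke2008} and Fedotov \cite[Theorem~3.5]{Fedotov2013}, whose arguments rest on exactly your identification $\Hom_Q(M,I_j)\cong M_j^*$, the resulting equivalence of stability with injectivity of $\tilde f$, and King's GIT construction to obtain the geometric quotient and the scheme-theoretic isomorphism. No substantive deviation or gap to report.
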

The existence of this geometric quotient is part of the statement and for its proof it is important that the stability as defined above is a stability in the sense of geometric invariant theory. Here $\mb{d}$ is a tuple with multiplicities of injective bounded quiver representations and not the dimension vector of the quiver representation $J$. This theorem was first proven by M.~Reineke for Dynkin quivers in \cite[Proposition 3.9]{Reineke2008}. S.~Fedotov used the same methods to derive the statement in the generality of modules over finite dimensional algebras in \cite[{Theorem 3.5}]{Fedotov2013}.

The following theorem establishes a bijection between orbits in the variety of quiver representations and strata in the corresponding quiver Grassmannian, which preserves geometric properties. It allows us to lift certain properties of the variety of quiver representation, studied by G.~Kempken, to the quiver Grassmannians, which we use for the finite approximations of the affine flag variety and its degeneration. In this generality, it was already known by K.~Bongartz \cite{Bongartz1997} and in the case of Dynkin quivers it is proven by M.~Reineke in \cite[Theorem~6.4]{Reineke2008}. 

Define $\mr{R}^{(\mb{d})}_{\mb{e}}(Q,\mr{I})$ as the image of the projection
\[ pr: \mr{R}^s_{\mb{e},\mb{d}}(Q,\mr{I}) \to \mr{R}_{\mb{e}}(Q,\mr{I}).\]
\begin{trm}
\label{trm:group-action}
There is a bijection between $\mr{Aut}_{Q}(J)$-stable subvarieties of $\mr{M}^s_{\mb{e},\mb{d}}(Q,\mr{I})$ and $\mr{G}_{\mb{e}}$-stable subvarieties of $\mr{R}^{(\mb{d})}_{\mb{e}}(Q,\mr{I})$ such that inclusions, closures, irreducibility and types of singularities are preserved.
\end{trm}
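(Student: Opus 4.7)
The plan is to exploit the double fibration
\[ \mr{M}^s_{\mb{e},\mb{d}}(Q,\mr{I}) \xleftarrow{\ \pi\ } \mr{R}^s_{\mb{e},\mb{d}}(Q,\mr{I}) \xrightarrow{\ \mathrm{pr}\ } \mr{R}^{(\mb{d})}_{\mb{e}}(Q,\mr{I}), \]
where $\pi$ is the geometric quotient by $\mr{G}_\mb{e}$ provided by Theorem~\ref{trm:framed-module} and $\mathrm{pr}$ is the projection forgetting the framing. The first task is to set up two commuting group actions on the middle space: the $\mr{G}_\mb{e}$-action by change of basis, which is free on stable points (and whose quotient is $\pi$), and the action of $\mr{Aut}_Q(J)$ induced on framings $f$ by post-composition under the adjunction $\Hom_\Kbb(\mb{e},\mb{d}) \cong \Hom_Q(M,J)$, for which $\mathrm{pr}$ is invariant. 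With this, $\pi$ is a principal $\mr{G}_\mb{e}$-bundle, hence smooth and surjective, and $\mathrm{pr}$ is a smooth surjective morphism whose fibres are the open (hence irreducible) subsets of $\Hom_\Kbb(\mb{e},\mb{d})$ cut out by stability.

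I would then define the correspondence by $Y \mapsto \pi(\mathrm{pr}^{-1}(Y))$ with inverse $X \mapsto \mathrm{pr}(\pi^{-1}(X))$. Checking that these are mutually inverse reduces to an orbit-level statement: a $\mr{G}_\mb{e}$-orbit of representations $M$ realisable inside $J$ corresponds exactly to the $\mr{Aut}_Q(J)$-orbit of embeddings $M \hookrightarrow J$. Transitivity here uses that the multiplicities encoded by $\mb{d}$ are taken large enough that $J$ contains a copy of every relevant injective summand, which is the point of the framed moduli description. Inclusions are preserved tautologically, and preservation of closures follows because $\pi$ is open and closed on $\mr{G}_\mb{e}$-saturated subsets while $\mathrm{pr}$ is a smooth map with irreducible fibres, so it identifies closures of invariant subsets on either side.

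The main obstacle is the preservation of irreducibility and, above all, of types of singularities. For irreducibility I would use that the fibres of $\pi$ and $\mathrm{pr}$ are irreducible, so taking pre- or image of an irreducible invariant subvariety returns an irreducible subvariety on the other side. For singularities the crucial input is the general principle that a smooth morphism induces a smooth equivalence between the local rings of corresponding points: for $p \in \mr{R}^s_{\mb{e},\mb{d}}(Q,\mr{I})$, the henselisations of the local rings of $Y$ at $\mathrm{pr}(p)$ and of $\pi(\mathrm{pr}^{-1}(Y))$ at $\pi(p)$ differ only by formal power series variables coming from the smooth fibres of the two maps. Hence properties like normality, Cohen--Macaulayness and rationality of singularities transfer in both directions. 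This is precisely the mechanism exploited by Bongartz \cite{Bongartz1997} and, in the Dynkin setting, by Reineke \cite[Theorem~6.4]{Reineke2008}; the generalisation to bounded quivers is a matter of verifying that Theorem~\ref{trm:framed-module} places us in exactly the same geometric setup, so their singularity comparison arguments transfer verbatim.
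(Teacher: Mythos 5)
Your proposal is correct and follows essentially the same route as the paper, which gives no details beyond invoking the arguments of Bongartz and Reineke: the two-sided fibration $\mr{M}^s_{\mb{e},\mb{d}}(Q,\mr{I}) \leftarrow \mr{R}^s_{\mb{e},\mb{d}}(Q,\mr{I}) \rightarrow \mr{R}^{(\mb{d})}_{\mb{e}}(Q,\mr{I})$ with commuting $\mr{G}_\mb{e}$- and $\mr{Aut}_Q(J)$-actions, transitivity of $\mr{Aut}_Q(J)$ on the fibres of the projection, and transfer of irreducibility, closures and singularity types along smooth surjections with irreducible fibres. The one small imprecision is your justification of that transitivity: it rests on the injectivity of $J$ (uniqueness of injective envelopes together with Krull--Schmidt), not on the multiplicities $\mb{d}$ being taken large.
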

The proof of this theorem is obtained with the same arguments as used by M.~Reineke, for the Dynkin case in \cite{Reineke2003,Reineke2008}.
\section{The Degenerate Affine Flag Variety}\label{sec:deg-aff-flag}
In this section we recall the definition of the affine flag variety, define its degeneration and state some geometric properties of the degeneration, which are based on the identification with quiver Grassmannians. Let $\C[[t]]$ be the ring of formal power series over $\C$ and let $\C((t))$ be the field of Laurent series over $\C$. Then 
\[\widehat{\mr{GL}}_n:= \mr{GL}_n(\C) \otimes \C((t))
\]
is the \f{loop group} associated to $\mr{GL}_n(\C)$. Let $B \subset \mr{GL}_n(\C)$ be a Borel subgroup. For example, it can be chosen as the subgroup of upper triangular matrices. Let 
\[ p: \mr{GL}_n(\C) \otimes \C[[t]] \to \mr{GL}_n(\C) \]
be the projection, which evaluates every formal power series at $t=0$. The preimage $I := p^{-1}(B)$ is a subgroup of $\widehat{\mr{GL}}_n$ and is called \f{Iwahori subgroup}.

\begin{defi}
The \f{affine flag variety} of type $\mathfrak{gl}_n$ is defined as
\[ \mathcal{F}l\big(\widehat{\mathfrak{gl}}_n\big) := \widehat{\mr{GL}}_n/I.\]
\end{defi}
\begin{rem}
By \cite[Corollary~13.2.9]{Kumar2002}, this definition coincides with the definition of the affine flag variety, via affine Lie algebras and their associated affine Kac-Moody groups, as mentioned in the introduction. This construction is described in detail in \cite[Chapter~XIII]{Kumar2002} and a brief overview can be found in \cite[Section~1.1]{FFR2017}. For our purposes it is sufficient to have the above version of the definition, since this is the definition used in \cite{KaPe1986}, where the parametrisation as introduced in Proposition~\ref{prop:alt-param-classical} is constructed.
\end{rem}

Before we describe the alternative parametrisation of the affine flag variety, we recall it for the $\mr{SL}_n$-flag variety. Let $B_n$ denote a Borel subgroup of $\mr{SL}_n(\C)$. Then 
\[ \mathcal{F}l\big(\mathfrak{sl}_n\big) := \mr{SL}_n(\C)/B_n \cong \Bigg\{ \big(U_k \big)_{k=1}^{n-1}  \in \prod_{k=1}^{n-1} \mathrm{Gr}_k(\C^n) \ : \  U_1 \subset U_2 \subset \ldots \subset U_{n-1} \Bigg\}. \]
With the parametrisation on the right hand side, it can be realised as quiver Grassmannian \cite[Remark~2.8]{CFR2012}.
\subsection{Alternative Parametrisations of the Affine Flag Variety}
The first step in the direction of approximations by quiver Grassmannians is an alternative description of the affine flag variety, which is similar to the identification with the set of vector space chains in the classical setting. The affine flag variety is infinite dimensional such that we have to replace the finite dimensional vector space by some infinite dimensional objects. 

There are two approaches to this problem. The more common construction is via lattice chains \cite{BeLa1994,Goertz2001,Goertz2010}. It is possible to define approximations and degenerations of the affine flag variety in this setting, but we want to take a different path, where the analogy to the classical setting is more visible. The second construction is based on Sato Grassmannians \cite{FFR2017,KaPe1986}. 

For $\ell \in \Z$ let $V_\ell$ be the vector space 
\[ V_\ell := \mr{span}(v_\ell,v_{\ell-1},v_{\ell-2},\dots), \] 
which is a subspace of the infinite dimensional $\C$-vector space $V$ with basis vectors $v_i$ for $i \in \Z$. The \textbf{Sato Grassmannian} $\mathrm{SGr}_k$ for $k \in \Z$ is defined as
\[ \mathrm{SGr}_k := \big\{ U \subset V \  :  \ \mr{There} \ \mr{exists} \ \mr{a} \ \ell < k \ \mr{s.t.} \  V_\ell \subset U  
\  \mr{and}  \  \dim U/V_\ell = k-\ell \ \big\}. \]
The vector spaces in the chains for the classical flag variety are elements of the Grassmannians $\mr{Gr}_k(n)$. Analogously, we obtain a description of the affine flag variety as a set of cyclic chains where the vector spaces are elements of the Sato Grassmannians $\mr{SGr}_k$.
\begin{prop}[\cite{FFR2017,KaPe1986}]
\label{prop:alt-param-classical}
The affine flag variety $\mathcal{F}l\big(\widehat{\mathfrak{gl}}_n\big)$ as subset in the product of Sato Grassmannians is parametrised as
\[\mathcal{F}l\big(\widehat{\mathfrak{gl}}_n\big) \cong \Bigg\{ \big(U_k \big)_{k=0}^{n-1}  \in \prod_{k=0}^{n-1} \mathrm{SGr}_k  \ : \  U_0 \subset U_1 \subset \ldots \subset U_{n-1} \subset s_n U_0 \Bigg\}, \]
where $s_n : V \to V$ maps $v_i$ to $v_{i+n}$ for all $i \in \Z$.
\end{prop}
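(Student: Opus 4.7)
The plan is to make the correspondence explicit by identifying the infinite-dimensional vector space $V$ with $\C^n \otimes_\C \C((t))$ in such a way that $V_0$ corresponds to the standard lattice $L_0 := \C^n \otimes \C[[t]]$ and the shift $s_n$ becomes multiplication by $t^{-1}$. A convenient choice is $v_{-r} := e_{n-r}$ for $0 \le r \le n-1$, extended to all of $\Z$ by $v_{i+n} := s_n v_i$. Under this identification, the subspaces $V_k$ for $0 \le k \le n-1$ add one basis vector at a time and form the standard cyclic chain $V_0 \subset V_1 \subset \cdots \subset V_{n-1} \subset t^{-1} V_0 = s_n V_0$; I take this as the basepoint. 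Since the linear action of $\widehat{\mr{GL}}_n = \mr{GL}_n(\C((t)))$ on $V$ commutes with multiplication by $t^{\pm 1}$, the orbit map
\[ \Phi : \widehat{\mr{GL}}_n \longrightarrow \prod_{k=0}^{n-1} \mr{SGr}_k, \qquad g \longmapsto (gV_0, gV_1, \dots, gV_{n-1}), \]
is well defined and lands in the set of cyclic chains; the containment $V_\ell \subset gV_k$ for $\ell \ll 0$ needed for the Sato condition comes from the fact that $g$ and $g^{-1}$ have entries of bounded pole order.

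Next I would identify the stabiliser of the standard chain with the Iwahori subgroup $I$. The stabiliser of $V_0$ alone is $\mr{GL}_n(\C[[t]])$, because preserving the $\C[[t]]$-lattice $L_0$ is exactly having $\C[[t]]$-entries. Within that subgroup, the extra condition of fixing each $V_k$ descends to stabilising the standard complete flag inside $s_n V_0 / V_0 \cong \C^n$, and under the evaluation map $p$ this is equivalent to lying in $p^{-1}(B) = I$. Hence $\Phi$ descends to an injection $\widehat{\mr{GL}}_n / I \hookrightarrow \prod_k \mr{SGr}_k$ whose image is contained in the cyclic chains.

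The crucial step is surjectivity. Given a cyclic chain $(U_0, \dots, U_{n-1})$, I would extend it to a bi-infinite chain $(U_k)_{k \in \Z}$ by $U_{k+n} := s_n U_k$. The cyclic condition $U_{n-1} \subset s_n U_0$, together with the filtration, forces $t U_0 \subset U_0$; combined with $V_\ell \subset U_0$ for some $\ell \ll 0$ and the finite-codimension condition, this upgrades $U_0$ to a genuine $\C[[t]]$-lattice in $\C^n \otimes \C((t))$. The Iwasawa/elementary-divisor decomposition then supplies an ordered $\C[[t]]$-basis of $U_0$; refining it so that the first $k$ vectors span the image of $U_k$ in $U_{n-1}/U_0$ compatibly with the flag, and collecting the result as columns, produces the desired $g \in \widehat{\mr{GL}}_n$ with $g V_k = U_k$ for all $k$.

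The main obstacle is precisely this upgrade from a $\C$-subspace satisfying $t U_0 \subset U_0$ to an honest $\C[[t]]$-submodule: $\C[t]$-invariance is automatic, but one must absorb convergent power series. The way around is topological: the subspaces $V_\ell$ form a fundamental system of neighbourhoods of $0$ in the $t$-adic topology on $V$, and $V_\ell \subset U_0$ makes $U_0$ both open and closed, so every convergent sum $\bigl(\sum_n a_n t^n\bigr) u$ with $u \in U_0$ lies in $U_0$. After this step, the identification with the affine Grassmannian $\widehat{\mr{GL}}_n / \mr{GL}_n(\C[[t]])$ and the refinement to $\widehat{\mr{GL}}_n / I$ reduce to standard Bruhat-Tits manipulations already carried out in \cite{KaPe1986}.
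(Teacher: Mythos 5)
The paper itself offers no proof of this proposition: it is imported wholesale from \cite{FFR2017,KaPe1986}, so there is no internal argument to compare yours against. Your strategy --- identify $V$ with $\C^n\otimes\C((t))$ so that $V_0$ becomes the standard lattice and $s_n$ becomes multiplication by $t^{-1}$, send $gI$ to the chain $(gV_0,\dots,gV_{n-1})$, compute the stabiliser of the base chain to be exactly $I$, and prove surjectivity by upgrading $U_0$ to a $\C[[t]]$-lattice and invoking elementary divisors --- is the standard lattice-chain argument and is certainly the intended proof. The stabiliser computation, the reduction of the remaining chain data to a complete flag in $s_nU_0/U_0\cong\C^n$, and the observation that $U_0\subset U_1\subset\cdots\subset U_{n-1}\subset s_nU_0$ forces $tU_0\subset U_0$ are all correct and are the right key points. (One foundational caveat you inherit from the paper: $V$ is defined as the span of the $v_i$, i.e.\ $\C^n\otimes\C[t,t^{-1}]$, on which $\mr{GL}_n(\C((t)))$ does not act and which is not $t$-adically complete; for your topological absorption argument, and indeed for the statement to make sense at all, one must work with the completed space, so that $V_0$ really is $\C^n\otimes\C[[t]]$.)

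The one genuine gap is in the very first step, where you check that $\Phi(g)$ lands in $\prod_k\mathrm{SGr}_k$. Membership in $\mathrm{SGr}_k$ is not merely the existence of some $V_\ell\subset gV_k$ of finite codimension --- which is all your ``bounded pole order'' remark delivers --- but the exact count $\dim gV_k/V_\ell=k-\ell$. That count works out to $k-\ell-\mathrm{val}(\det g)$, so in fact $gV_k\in\mathrm{SGr}_{k-\mathrm{val}(\det g)}$, and $\Phi$ maps into the stated product only on the subgroup where $\mathrm{val}(\det g)=0$. Correspondingly, your surjectivity argument only ever produces such $g$ (the conditions $U_k\in\mathrm{SGr}_k$ force $\mathrm{val}(\det g)=0$), so what the argument actually establishes is that the right-hand side is the neutral connected component of $\mr{GL}_n(\C((t)))/I$, whereas the literal quotient $\mr{GL}_n(\C((t)))/I$ has $\Z$ worth of components indexed by $\mathrm{val}\circ\det$. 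To close the gap you must either build this determinant normalisation into the meaning of $\widehat{\mr{GL}}_n$ (or observe that the Kac--Peterson realisation being cited is the one whose flag variety is this single component), and in any case the dimension count $\dim gV_k/V_\ell=k-\ell$ has to be carried out explicitly rather than gestured at.
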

\begin{rem}
Here we use a slightly different parametrisation for the affine flag variety than the version by E.~Feigin, M.~Finkelberg and M.~Reineke in \cite{FFR2017}. It turns out that this parametrisation fits better for the identification of finite approximations with quiver Grassmannians which we have in mind. 
\end{rem}
It is shown by E.~Feigin in \cite[Theorem~0.1]{Feigin2011} that the degenerate $SL_{n}$-flag variety admits a description via vector space chains, where the spaces are related by projections instead of inclusions. In the same way, we define degenerations of the affine flag variety.
\begin{defi}
\label{def:deg-aff-flag}
The \textbf{degenerate affine flag variety} $\mathcal{F}l^a\big(\widehat{\mathfrak{gl}}_n\big)$ is defined as
\[\mathcal{F}l^a\big(\widehat{\mathfrak{gl}}_n\big)  := \Bigg\{ \big(U_k \big)_{k=0}^{n-1} \in \prod_{k=0}^{n-1} \mathrm{SGr}_k \  : \ \mr{pr}_{k+1} U_k \subset U_{k+1}, \   \mr{pr}_{n}U_{n-1} \subset s_n U_0 \Bigg\}, \]
where $\mr{pr}_{i} : V \to V$ is the projection of $v_i$ to zero.
\end{defi}
\begin{rem}\label{rem:isomorphic-def-of-deg}
The different parametrisation of the affine flag also leads to a different definition of the degeneration, but the version as defined above is isomorphic to the degenerate affine flag variety as introduced in \cite[Definition~1.3]{FFR2017}.
\end{rem}
The isomorphism from Remark~\ref{rem:isomorphic-def-of-deg} is described below. Translated to our notation, Definition~1.3 in \cite{FFR2017} reads as
\[\mathcal{F}l^a\big(\widehat{\mathfrak{gl}}_n\big)  = \Bigg\{ \big(U_k \big)_{k=0}^{n} \in \prod_{k=0}^{n} \mathrm{SGr}_k \  : \ \mr{pr}_{k+1} U_k \subset U_{k+1} \ \  \mr{and} \ \ U_{n} = s_n U_0 \Bigg\}. \]
This follows from their identification of basis of $V$ and $\C^n \otimes \C[t,t^{-1}]$ by 
\[ v_{nk+j} = e_j \otimes t^{-k-1} \quad \mr{for} \ j \in [n] \ \mr{and} \ k \in \Z \]
as in \cite[Equation~1.1]{FFR2017}. Here $e_1,\dots,e_n$ is the standard basis of $\C^n$. Since $U_n$ is required to be equal to $s_n U_0$, we loose no information, if we remove it from the parametrisation and impose the condition $\mr{pr}_{n}U_{n-1} \subset s_n U_0$. 
\subsection{Finite Approximations by Quiver Grassmannians and Geometric Properties}
E.~Feigin, M.~Finkelberg and M.~Reineke studied quiver Grassmannians for the loop quiver, to model finite approximations of the affine Grassmannian \cite{FFR2017}. Analogously, in this paper we restrict us to quiver Grassmannians for the equioriented cycle:
\begin{center}
\begin{tikzpicture}[scale=.62]
\node at ($(-6,0)$) {$\Delta_n \ := $};

\foreach \ang\lab\anch in {90/1/north, 45/2/{north east}, 0/3/east, 270/i/south, 180/{n-1}/west, 135/n/{north west}}{
  \draw[fill=black] ($(0,0)+(\ang:3)$) circle (.08);
  \node[anchor=\anch] at ($(0,0)+(\ang:2.8)$) {$\lab$};
}

\foreach \ang\lab in {90/1,45/2,180/{n-1},135/n}{
  \draw[->,shorten <=7pt, shorten >=7pt] ($(0,0)+(\ang:3)$) arc (\ang:\ang-45:3);
  \node at ($(0,0)+(\ang-22.5:3.65)$) {$\alpha_{\lab}$};
}

\draw[->,shorten <=7pt] ($(0,0)+(0:3)$) arc (360:325:3);
\draw[->,shorten >=7pt] ($(0,0)+(305:3)$) arc (305:270:3);
\draw[->,shorten <=7pt] ($(0,0)+(270:3)$) arc (270:235:3);
\draw[->,shorten >=7pt] ($(0,0)+(215:3)$) arc (215:180:3);
\node at ($(0,0)+(0-20:3.65)$) {$\alpha_3$};
\node at ($(0,0)+(315-25:3.65)$) {$\alpha_{i-1}$};
\node at ($(0,0)+(270-20:3.65)$) {$\alpha_i$};
\node at ($(0,0)+(225-25:3.65)$) {$\alpha_{n-2}$};

\foreach \ang in {310,315,320,220,225,230}{
  \draw[fill=black] ($(0,0)+(\ang:3)$) circle (.02);
}

\end{tikzpicture}
\end{center}
Both, the set of vertices, and arrows, of $\Delta_n$ are in bijection with the set $\Z_n:= \Z/n\Z$. 

For a positive integer $\omega$, the finite approximation of the degenerate affine flag variety is defined as
\[ \mathcal{F}l^a_\omega\big(\widehat{\mathfrak{gl}}_n\big)  := \Big\{   (U_k)_{k=0}^{n-1} \in \mathcal{F}l^a\big(\widehat{\mathfrak{gl}}_n\big) \ : \ V_{-\omega n} \subseteq U_0 \subseteq V_{\omega n}\Big\} .\] 
\begin{trm}
\label{trm:finite_approx-in-the-intro}
Let $\omega \in \N$ be given, define the quiver representation
\[ M_\omega := \Big( \, \big( V_i:= \C^{2\omega n} \big)_{i \in \Z_n}, \big( M_{\alpha_i} := s_1 \circ \mr{pr}_{\omega n} \big)_{i \in \Z_n} \,  \Big),\] and the dimension vector $\mb{e}_\omega := (e_i := \omega n)_{i \in \Z_n}$. Then the finite dimensional approximation of the degenerate affine flag variety is isomorphic to the quiver Grassmannian corresponding to $M_\omega$ and $\mb{e}_\omega$, i.e.
\[ \mathcal{F}l^a_\omega\big(\widehat{\mathfrak{gl}}_n\big) \cong \mr{Gr}^{\Delta_n}_{\mb{e}_\omega}\big(M_\omega\big). \]
\end{trm}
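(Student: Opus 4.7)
The plan is to construct an explicit isomorphism $\phi: \mathcal{F}l^a_\omega\big(\widehat{\mathfrak{gl}}_n\big) \to \mr{Gr}^{\Delta_n}_{\mb{e}_\omega}(M_\omega)$ sending a tuple $(U_k)_{k=0}^{n-1}$ in the approximation to a subrepresentation $(W_k)_{k \in \Z_n}$ of $M_\omega$ of dimension vector $\mb{e}_\omega$, and to verify it is a regular bijection whose inverse is also regular.

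The first step is to fix, at each vertex $i \in \Z_n$ of $\Delta_n$, an identification of the abstract space $\C^{2\omega n}$ with a concrete $2\omega n$-dimensional subquotient of the Sato ambient $V$, chosen so that the abstract map $s_1 \circ \mr{pr}_{\omega n}$—which is the same endomorphism at every arrow—uniformly reproduces the vertex-dependent Sato-side projections $\mr{pr}_{k+1}$ governing $U_k$ into $U_{k+1}$ in Definition~\ref{def:deg-aff-flag}. Next I would verify the key boundedness statement: the hypothesis $V_{-\omega n} \subseteq U_0 \subseteq V_{\omega n}$ propagates through the chain of degenerate projection conditions to control every $U_k$. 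The lower bound $V_{-\omega n} \subseteq U_k$ is immediate by forward induction, since $\mr{pr}_{k+1}$ fixes every $v_i$ with $i \neq k+1$, so $V_{-\omega n} = \mr{pr}_{k+1}(V_{-\omega n}) \subseteq \mr{pr}_{k+1}(U_k) \subseteq U_{k+1}$. A dual backward induction starting from $\mr{pr}_n U_{n-1} \subseteq s_n U_0 \subseteq s_n V_{\omega n} = V_{\omega n + n}$, and using that $\C v_{k+1} \subseteq V_{\omega n + n}$, yields a corresponding uniform upper bound on each $U_k$.

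With the bounds in place, each $U_k$ is trapped inside a fixed finite-dimensional ambient, and I would extract the desired $\omega n$-dimensional $W_k$ as a canonical subquotient of the window at vertex $k$; the Sato-index condition on $U_k$ forces $\dim W_k = \omega n$, and the Sato-side conditions $\mr{pr}_{k+1} U_k \subseteq U_{k+1}$ translate directly into the subrepresentation relations $M_{\alpha_k}(W_k) \subseteq W_{k+1}$. For the inverse, given a subrepresentation $(W_k)$ of $M_\omega$, one recovers each $U_k \in \mr{SGr}_k$ by lifting $W_k$ back to the window and sato-completing by $V_{-\omega n}$; the compatibility $M_{\alpha_k}(W_k) \subseteq W_{k+1}$ then translates exactly into the Sato projection chain and the cyclic condition $\mr{pr}_n U_{n-1} \subseteq s_n U_0$. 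Both directions of $\phi$ are linear in the natural Plücker-type coordinates, yielding an isomorphism of varieties.

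The main obstacle, and where I expect the bulk of the technical work to lie, is pinning down the window identifications so that the single vertex-independent abstract map $s_1 \circ \mr{pr}_{\omega n}$ faithfully encodes every Sato-side projection $\mr{pr}_{k+1}$ around the entire cycle—especially the cyclic wrap-around $\mr{pr}_n U_{n-1} \subseteq s_n U_0$, where the shift $s_n$ appears nontrivially—and ensuring that the $\omega n$-dimensional subspace $W_k$ extracted from each $U_k$ is well-defined and reversible. This is a careful index-juggling exercise whose bookkeeping forms the substantive content of the proof.
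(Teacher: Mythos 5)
Your overall strategy is the same as the paper's: trap each $U_k$ in a finite-dimensional window of $V$, identify that window with $\C^{2\omega n}$ by a vertex-dependent choice of basis, and check that the Sato-side projections $\mr{pr}_{k+1}$ (including the wrap-around involving $s_n$) all become the single quiver map $s_1\circ\mr{pr}_{\omega n}$. The place where your argument as written does not close is the bounds you actually derive. Your forward/backward induction is correct but only yields the \emph{uniform} window $V_{-\omega n}\subseteq U_k\subseteq V_{\omega n+n}$ for every $k$. That window has dimension $2\omega n+n$, and inside it $U_k$ has dimension $\omega n+k$ over $V_{-\omega n}$: this is forced by $U_k\in\mathrm{SGr}_k$ once $V_{-\omega n}\subseteq U_k$. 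So neither the ambient dimension nor the subspace dimension matches $\mathrm{Gr}_{\omega n}(2\omega n)$, the discrepancy grows with $k$, and the ``canonical $\omega n$-dimensional subquotient $W_k$'' you invoke does not exist with these bounds. This is a genuine dimension mismatch, not bookkeeping.

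What is actually needed (and what the paper works with) are the $k$-shifted windows $V_{-\omega n+k}\subseteq U_k\subseteq V_{\omega n+k}$, i.e.\ the truncated Sato Grassmannians $\mathrm{SGr}_{k,\omega}$, each of dimension exactly $2\omega n$ with $U_k$ of codimension $\omega n$; the identification $e_j\leftrightarrow v_{\omega n+k-j+1}$ together with the shift isomorphism $\mathrm{SGr}_k\cong s_k\,\mathrm{SGr}_0$ then turns every $\mr{pr}_{k+1}$ into $s_{-1}\circ\mr{pr}_1$ uniformly, and the base change converts this to $s_1\circ\mr{pr}_{\omega n}$. These shifted bounds are strictly stronger than the uniform ones you prove and do not follow from your induction (your argument never uses membership in $\mathrm{SGr}_k$ beyond the lowest vertex, and one can satisfy your uniform bounds while violating, say, $U_1\subseteq V_{\omega n+1}$). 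To complete the proof along your lines you must establish the shifted window for each $U_k$ before extracting $W_k$; once that is in place the rest of your plan (translation of the projection chain, regularity of both directions) goes through exactly as in the paper.
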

This construction allows us to obtain statements about geometric properties of the approximations from the corresponding quiver Grassmannians.
\begin{trm}
\label{trm:geom-prop-aff-flag-in-the-intro}
For $\omega \in \N$, the approximation $\mathcal{F}l^{a}_{\omega}\big(\widehat{\mathfrak{gl}}_n\big)$ of the degenerate affine flag variety satisfies:
\vskip3pt
(1) $\ \ $ It is a projective variety of dimension $\omega n^2$.
\vskip3pt
(2) $\ \ $ Its irreducible components are equidimensional.
\vskip3pt
(3) $\ \ $ It admits a cellular decomposition.
\vskip3pt
(4) $\ \ $ The irreducible components are normal, Cohen-Macaulay
\vskip1pt $\quad \quad \ $ and have rational singularities.
\vskip3pt
(5) $\ \ $ There is a bijection between irreducible components
\vskip1pt $\quad \quad \ $ and grand Motzkin paths of length $n$. 
\end{trm}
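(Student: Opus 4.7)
The plan is to reduce everything to the quiver Grassmannian side via Theorem~\ref{trm:finite_approx-in-the-intro}, so that the five assertions become statements about $\mr{Gr}^{\Delta_n}_{\mb{e}_\omega}(M_\omega)$, and then to exploit the framed moduli interpretation together with results on the variety of representations of $\Delta_n$. Projectivity in (1) is immediate once the quiver Grassmannian is realised as a closed subscheme of $\prod_{i\in\Z_n}\mr{Gr}_{\omega n}(\C^{2\omega n})$. For the dimension statement in (1) and the equidimensionality in (2), I would compute the dimensions of the strata $\mathcal{S}_U$ using the formula $\dim\mathcal{S}_U = [U,M_\omega]-[U,U]$ recalled from \cite{CFR2012}. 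Here the combinatorial dimension formula for $\Hom$-spaces between nilpotent indecomposable $\Delta_n$-representations, announced in Section~\ref{sec:quiver-grass-cycle} of the paper, is the key input: it reduces both $[U,M_\omega]$ and $[U,U]$ to counts involving the word data describing $U$ as a sum of indecomposables, and the difference collapses to $\omega n^2$ whenever $U$ corresponds to a maximal stratum.

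Parts (2) and (5) I would handle together by parametrising maximal strata. Every $U\in\mr{Gr}^{\Delta_n}_{\mb{e}_\omega}(M_\omega)$ decomposes as a direct sum of nilpotent indecomposables of the cycle, and the dimension vector $\mb{e}_\omega$ together with the requirement that $U$ embed into $M_\omega$ strongly constrains the admissible multiplicities. The plan is to encode such a decomposition by a lattice path of length $n$ whose steps record, at each vertex of $\Delta_n$, the change between the multiplicities of indecomposables starting and ending there; up (horizontal, down) steps will correspond to the three local possibilities, giving a grand Motzkin path. One then checks that exactly the paths of length $n$ correspond to strata that are not contained in the closure of any other, which yields both the bijection (5) and the equidimensionality (2) once the dimension formula from the previous paragraph is applied uniformly to each such $U$.

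For (4) I would invoke the framed moduli interpretation of Theorem~\ref{trm:framed-module} together with Theorem~\ref{trm:group-action}, which translates normality, Cohen-Macaulayness and the type of singularities between $\mr{G}_\mb{e}$-stable subvarieties of $\mr{R}^{(\mb{d})}_{\mb{e}}(\Delta_n,\mr{I})$ and $\mr{Aut}_{\Delta_n}(J)$-stable subvarieties of the quiver Grassmannian. After verifying that the injective hull $J$ relevant to $M_\omega$ is a bounded representation of the cycle with an appropriate nilpotency ideal $\mr{I}$, the closures of orbits of nilpotent representations of $\Delta_n$ have been shown by G.~Kempken in \cite{Kempken1982} to be normal, Cohen-Macaulay with rational singularities, and Theorem~\ref{trm:group-action} lifts this to the closures of the strata indexed by grand Motzkin paths, i.e.\ to the irreducible components. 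Part (3) is the forward reference to Theorem~\ref{trm:cell_decomp-approx-lin-deg-aff-flag}, where a suitable one-dimensional torus action on $M_\omega$ produces a decomposition into attracting sets of isolated fixed points.

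The main obstacle I expect is verifying that the combinatorial parametrisation of irreducible components by grand Motzkin paths matches the geometric picture, i.e.\ confirming that each such path gives a stratum whose closure is actually maximal, and that no maximal stratum is missed. This requires a careful analysis of the degeneration order $N\geq M$ in terms of the word combinatorics of indecomposables of $\Delta_n$, and the resulting dimension count must come out uniformly to $\omega n^2$; once this is in place the other four items follow cleanly from the framed moduli machinery and the torus action of Section~\ref{sec:cell-decomp}.
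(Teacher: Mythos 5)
Your architecture coincides with the paper's: reduce to $\mr{Gr}^{\Delta_n}_{\mb{e}_\omega}(M_\omega)$ via Theorem~\ref{trm:finite_approx-in-the-intro}, identify $M_\omega$ with a direct sum of bounded injectives $\bigoplus_j I_j^\omega\otimes\C^2$ so that Theorems~\ref{trm:framed-module} and~\ref{trm:group-action} apply, compute strata dimensions by $[U,M_\omega]-[U,U]$, lift Kempken's normality/Cohen--Macaulay/rational-singularities results for orbit closures to the components for (4), and quote the $\C^*$-action of Section~\ref{sec:cell-decomp} for (3). The one place where you have not actually supplied an argument is precisely the step you flag as ``the main obstacle'': the inequality $[U,U]\geq[X,X]$ for all $U$ in the quiver Grassmannian, together with the characterisation that equality forces every indecomposable summand of $U$ to have maximal length $\omega n$. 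This is not a routine check of the degeneration order; in the paper it is Proposition~\ref{prop:hom_ineq}, proved by taking two non-maximal summands with overlapping words $w_1w_2$ and $w_2w_3$, replacing them by $w_1w_2w_3$ and $w_2$ to produce $\hat U$ in the same quiver Grassmannian with $\mathcal{O}_U\subset\overline{\mathcal{O}_{\hat U}}$ (Kempken's Satz~5.5), and iterating until no short summand remains. The divisibility $N=\omega n$ is essential here (the paper gives an explicit counterexample for $N\neq\omega n$), and it is what guarantees both that the iteration terminates in a representation with only maximal-length summands and that $[U_i(N),U_j(N)]=\omega$ uniformly, which is what makes the dimension count come out to $\omega n^2$ on every maximal stratum and hence gives (1), (2) and the parametrisation $\{\mb p:\ p_i\le 2,\ \sum p_i=n\}$ underlying (5). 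Until that lemma is proved, your claims that the maximal strata are exactly the ones you describe, and that their dimensions agree, remain unestablished; everything else in your plan follows the paper's route and is sound.
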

Grand Motzkin paths of length $n$ are lattice paths from $(0,0)$ to $(n,0)$ with steps $(1,1)$, $(1,0)$ and $(1,-1)$,  without the requirement that the path is not allowed to cross the $x$-axis. Accordingly the number of irreducible components is independent of the parameter $\omega$ and the same in every approximation. 

In Section~\ref{sec:quiver-grass-cycle}, we study geometric properties of quiver Grassmannians for the equioriented cycle. This specialises to the proof of Theorem~\ref{trm:geom-prop-aff-flag-in-the-intro} as given in Section~\ref{sec:proof-of-geom-prop}. The identification of cells and affine Dellac configurations is proven in Section~\ref{sec:aff-dellac}. In Section~\ref{sec:lin-deg-flag}, we generalise the constructions from the previous sections to certain linear degenerations of the affine flag variety.

\subsection{Isomorphism of Finite Approximations and Quiver Grassmannians}\label{sec:iso:fin-approx-quiv-grass}
\begin{proof}[Proof of Theorem~\ref{trm:finite_approx-in-the-intro}]
In the approximation to the parameter $\omega$, the cyclic relations of the vector spaces, describing a point $(U_k)_{k=0}^{n-1}$ in the degenerate affine flag variety, induce the restrictions
\[  V_{-n\omega + k} \subseteq U_k \subseteq  V_{n\omega+k} \quad \mr{for} \ U_k \in \mr{SGr}_k. \]
Accordingly the corresponding approximations of the Sato Grassmannians 
\[ \mr{SGr}_{k,\omega} := \big\{ U \in \mr{SGr}_k : V_{-n\omega + k} \subseteq U \subseteq  V_{n\omega+k} \big\}\]
are isomorphic to the Grassmannian of vector subspaces $\mr{Gr}_{\omega n}(2\omega n)$. Here we identify the standard basis of $\C^{2 \omega n}$ with basis vectors of $V$ as
\[ e_j \longleftrightarrow v_{\omega n+k-j+1} \ \ \mathrm{for} \ \mathrm{all} \ j \in [2 \omega n].\]
Moreover, the Sato Grassmannian $\mr{SGr}_k$ is just the $k$-th shift of the Sato Grassmannian $\mr{SGr}_0$, i.e. $\mr{SGr}_k \cong s_k \mr{SGr}_0$ \cite[Section~1.2]{FFR2017}. Instead of one index shift by $n$ we can take the Sato Grassmannians $\mathrm{SGr}_0$ in the place of $\mathrm{SGr}_k$ and have a shift by one along each arrow of the quiver and for the maps $\mr{pr}_{k+1}$ we have to write $\mr{pr}_{1}$. If we want to apply these shifts on the left hand side of the containment relations, they become a shift by minus one. This identification works in the same way on the level of approximations. We obtain the projection $\mr{pr}_{\omega n}: \C^{2 \omega n} \to \C^{2 \omega n}$ if we apply the above base change for $k=0$ to the restriction of the projection $\mr{pr}_{1}: V \to V$ to the finite support induced by the approximation. Due to this base change, the index shift by minus one is turned into a shift by plus one. Combining these properties, we obtain the following chain of isomorphisms:
\begin{align*}
 \mathcal{F}l^a_\omega\big(\widehat{\mathfrak{gl}}_n\big)  &= \Bigg\{ \big(U_k \big)_{k=0}^{n-1} \in \prod_{k=0}^{n-1} \mathrm{SGr}_{k,\omega} \  : \ \mr{pr}_{k+1} U_k \subset U_{k+1}, \   \mr{pr}_{n}U_{n-1} \subset s_n U_0 \Bigg\}\\
  &\cong  \Bigg\{ \big(U_i \big)_{i \in \Z_n} \in \prod_{i \in \Z_n} \mathrm{SGr}_{0,\omega} \  : \ s_{-1} \circ \mr{pr}_{1} U_i \subset U_{i+1} \fa i \in \Z_n \Bigg\}\\
 &\cong \Bigg\{ \big(U_i \big)_{i \in \Z_n} \in \prod_{i \in \Z_n} \mathrm{Gr}_{\omega n}(2 \omega n) \  : \ s_{1} \circ \mr{pr}_{\omega n} U_i \subset U_{i+1} \fa i \in \Z_n \Bigg\}\\
 &\cong \mr{Gr}^{\Delta_n}_{\mb{e}_\omega}\big(M_\omega\big).
\end{align*} 
\end{proof}

\section{Quiver Grassmannians for the Equioriented Cycle}\label{sec:quiver-grass-cycle}
For every $i \in \Z_n$, we define the path with $\ell$ arrows starting at vertex $i$ as 
\[ p_i(\ell):=(i \vert \alpha_i \alpha_{i+1} \dots \alpha_{i+\ell-1} \vert i + \ell ).\]
The path algebra $\Kbb \Delta_n$ is denoted by $A_n$. This algebra is not finite dimensional because there are paths $p_i(\ell)$ of arbitrary length around the cycle. Let 
\[\mr{I}_N := \langle \, p_i(N) : i \in \Z_n \, \rangle \subset \Kbb \Delta_n \]
be the ideal of the path algebra generated by all paths of length $N$. For $N \in \N$, we define the bounded path algebra $A_ {n,N} :=  \Kbb \Delta_n / \mr{I}_N$. The following result is a special case of \cite[Theorem 5.4]{Schiffler2014}.
\begin{prop}
The category $\mr{rep}_\Kbb(\Delta_n,\mr{I}_N)$ of bounded quiver representations is equivalent to the category $A_{n,N}$-$\mr{mod}$ of (right) modules over the bounded path algebra.
\end{prop}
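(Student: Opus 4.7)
The plan is to exhibit an explicit pair of quasi-inverse functors between $\mathrm{rep}_\Kbb(\Delta_n,\mathrm{I}_N)$ and $A_{n,N}\text{-}\mathrm{mod}$, which is the standard way to prove this kind of equivalence for any bound quiver. Since the specific quiver $\Delta_n$ and the specific ideal $\mathrm{I}_N$ are very concrete, each step can be written down by hand rather than invoking the general machinery.

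First I would construct the functor $F \colon \mathrm{rep}_\Kbb(\Delta_n,\mathrm{I}_N) \to A_{n,N}\text{-}\mathrm{mod}$. Given $R = (V,M)$, set $F(R) := \bigoplus_{i \in \Z_n} V_i$ and define a right action of $A_{n,N}$ by letting the trivial path $e_i$ act as the projection onto $V_i$ and letting a path $p_i(\ell) = \alpha_i \alpha_{i+1} \cdots \alpha_{i+\ell-1}$ act on $v \in V_i$ as $M_{\alpha_{i+\ell-1}} \circ \cdots \circ M_{\alpha_i}(v) \in V_{i+\ell}$, and by zero on $V_j$ for $j \neq i$. One extends linearly and verifies compatibility with concatenation; the fact that every $R$ lies in $\mathrm{rep}_\Kbb(\Delta_n,\mathrm{I}_N)$ means all length-$N$ compositions vanish, so the action descends from $\Kbb\Delta_n$ to the quotient $A_{n,N}$. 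A morphism $\psi = (\psi_i)$ of representations gives the module map $\bigoplus \psi_i$; the intertwining relation $\psi_{i+1} \circ M_{\alpha_i} = N_{\alpha_i} \circ \psi_i$ is exactly what ensures equivariance with respect to right multiplication by each $\alpha_i$, and hence by every path.

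Next I would construct the inverse functor $G \colon A_{n,N}\text{-}\mathrm{mod} \to \mathrm{rep}_\Kbb(\Delta_n,\mathrm{I}_N)$. Given a right module $X$, the orthogonal idempotent decomposition $1 = \sum_{i \in \Z_n} e_i$ yields a direct sum decomposition $X = \bigoplus_{i \in \Z_n} Xe_i$ of $\Kbb$-vector spaces. I set $V_i := Xe_i$ and define the map $M_{\alpha_i} \colon V_i \to V_{i+1}$ as right multiplication by $\alpha_i$, which lands in $V_{i+1}$ because $\alpha_i = e_i \alpha_i e_{i+1}$ in $A_{n,N}$. The relations $p_i(N) = 0$ in $A_{n,N}$ translate directly into the required identity $M_{\alpha_{i+N-1}} \circ \cdots \circ M_{\alpha_i} = 0$, so $G(X)$ is genuinely an object of $\mathrm{rep}_\Kbb(\Delta_n,\mathrm{I}_N)$. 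Module homomorphisms respect both the idempotents $e_i$ and the arrows $\alpha_i$, so they restrict componentwise to morphisms of representations.

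Finally I would check that $FG$ and $GF$ are naturally isomorphic to the identity. On objects this is immediate from the constructions: $G(F(R))_i = F(R) e_i = V_i$ with the map $M_{\alpha_i}$ reinstated via right multiplication, while $F(G(X)) = \bigoplus_i Xe_i = X$ with the original action recovered from the actions of the idempotents and the arrows, since these generate $A_{n,N}$ as an algebra. Naturality is routine. The only real point requiring care is bookkeeping with the conventions for path composition and left/right actions, together with the verification that $\mathrm{I}_N$ is admissible (which holds for $N \geq 2$, since $\Delta_n$ has no loops for $n \geq 2$, so paths of length $N$ lie in the square of the arrow ideal); this is the minor obstacle in making the argument watertight, but it is essentially combinatorial and presents no genuine difficulty.
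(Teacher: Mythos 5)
Your proof is correct and is precisely the standard argument that the paper invokes by simply citing it as a special case of Theorem~5.4 in Schiffler's book: the paper gives no independent proof, and your explicit construction of the quasi-inverse functors $F$ and $G$ (via the idempotent decomposition $X=\bigoplus_i Xe_i$ and the descent of the $\Kbb\Delta_n$-action through the relations $p_i(N)=0$) is exactly what that reference does. The side remarks on admissibility of $\mathrm{I}_N$ and on left/right conventions are accurate and harmless.
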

Let $P_i \in \mr{rep}_\Kbb(\Delta_n,\mr{I}_N)$ be the projective bounded representation of $\Delta_n$ at vertex $i \in \Z_n$. Define the projective representation 
\[ X := \bigoplus_{i \in \Z_n} P_i \otimes \Kbb^{x_i}, \]
where $x_i \in \Z_{\geq 0}$ for all $i \in \Z_n$.
Analogously, let $I_j \in \mr{rep}_\Kbb(\Delta_n,\mr{I}_N)$ be the injective bounded representation of $\Delta_n$ at vertex $j \in \Z_n$ and define the injective representation 
\[Y := \bigoplus_{j \in \Z_n} I_j \otimes \Kbb^{y_j},\]
with $y_j \in \Z_{\geq 0}$ for all $j \in \Z_n$. Throughout this section, we study quiver Grassmannians 
\(\mr{Gr}_{\mb{e}}^{\Delta_n}(X \oplus Y), \)
where $ \mb{e} := \bdim X$ is the dimension vector of $X$.

A representation $U$ of the cycle is called nilpotent if there exists an integer $N$ such that $U$ satisfies the relations in $\mr{I}_N$, i.e. cyclic concatenations of the maps corresponding to $U$ are zero after a certain length. The indecomposable nilpotent representations of $\Delta_n$ are parametrised by some starting vertex $i \in \Z_n$ and a length parameter $\ell \in \Z_{\geq 0}$ \cite[Theorem 7.6]{Kirillov2016}. We denote them by $U_i(\ell)$ and the simple representations over the vertices have length one, i.e. $S_i = U_i(1)$. 
It turns out that we can identify bounded projective and bounded injective representations of the cycle, via indecomposable nilpotent representations.
\begin{prop}
\label{prop:proj_inj_dual}
For $n,N \in \N$ and all $i,j \in \Z_n$ the projective and injective representations $P_i$ and $I_j$ of the bound quiver $(\Delta_n,\mr{I}_N)$ satisfy 
\[ P_i \cong U_{i}(N) \cong I_{i+N-1} \quad \mr{and} \quad  I_j \cong U_{j-N+1}(N) \cong P_{j-N+1}.\]
\end{prop}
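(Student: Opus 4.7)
My plan is to verify both isomorphisms by writing down the basis and the action of the arrows for each of $P_i$, $I_j$, and $U_i(N)$, and observing that the resulting coefficient quivers coincide as labelled subquivers of $\Delta_n$. Because all three representations are indecomposable ``string'' modules on the cycle, determined up to isomorphism by their coefficient quiver together with the vertex labelling, this direct matching is enough.

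First I would analyse the projective case. Recall that every path in $\Delta_n$ has the form $p_i(\ell)$ for some $\ell \geq 0$, and the ideal $\mr{I}_N$ kills precisely the paths with $\ell \geq N$. Therefore in the bounded algebra $A_{n,N}$ the residue classes $[p_i(0)], [p_i(1)], \dots, [p_i(N-1)]$ form a basis of $P_i$, with $[p_i(\ell)]$ sitting at vertex $i + \ell \in \Z_n$. The map along the arrow $\alpha_{i+\ell}$ sends $[p_i(\ell)]$ to $[p_i(\ell+1)]$ for $0 \leq \ell \leq N-2$, and sends $[p_i(N-1)]$ to zero because $p_i(N) \in \mr{I}_N$. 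This is exactly the coefficient-quiver description of the nilpotent indecomposable $U_i(N)$, namely a linear chain of length $N$ starting at vertex $i$ and wrapping around the cycle. Hence $P_i \cong U_i(N)$.

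Next I would handle the injective $I_j$. Its vector space at vertex $k$ has basis the classes $[p_k(\ell)]$ with $0 \leq \ell < N$ and $k + \ell \equiv j \pmod n$. Since on $\Delta_n$ every non-trivial path from $k$ factors uniquely as $\alpha_k \cdot p_{k+1}(\ell - 1)$, the arrow action on $I_j$ (``remove the initial arrow'') sends $[p_k(\ell)] \mapsto [p_{k+1}(\ell-1)]$ for $\ell \geq 1$ and $[p_j(0)] \mapsto 0$. Reindexing the basis as $f_\ell := [p_{j-\ell}(\ell)]$ sitting at vertex $j - \ell$ for $\ell = 0, \dots, N-1$, we obtain a chain of length $N$ terminating at vertex $j$, which is exactly the coefficient quiver of $U_{j-N+1}(N)$. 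Applying the already-proven first isomorphism with starting vertex $j - N + 1$ then gives $U_{j-N+1}(N) \cong P_{j-N+1}$, and symmetrically $U_i(N) \cong I_{i+N-1}$ by substituting $j = i + N - 1$.

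The only delicate point is tracking the cyclic indices correctly for $I_j$: one must take care that the ``factor-out-the-first-arrow'' description of the injective produces a chain which \emph{ends} at $j$ rather than begins at it, so that the chain starts at $j - N + 1$ rather than $j$. Once this indexing convention is pinned down, the identification of the coefficient quivers is completely mechanical and the four isomorphisms all fall out simultaneously.
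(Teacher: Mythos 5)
Your proof is correct and follows essentially the same route as the paper, whose proof is a one-line appeal to the definitions of bounded projectives and injectives together with the classification of nilpotent indecomposables; you simply carry out explicitly the basis-and-arrow-action computation (including the correct handling of the cyclic reindexing for $I_j$) that the paper leaves implicit.
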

\begin{proof}
It follows from the definition of projective and injective representations of bound quivers as in \cite[Definition 5.3]{Schiffler2014} that they are of the form as described in \cite[Theorem 7.6]{Kirillov2016}.  
\end{proof}
This identification allows us to apply Theorem~\ref{trm:framed-module} to the quiver Grassmannians as introduced above. Accordingly it is possible to realise them as framed module space and we can use Theorem~\ref{trm:group-action} to study their geometry.
\subsection{Word Combinatorics}
\begin{defi}
For a nilpotent representation $U_i(\ell)$ of the equioriented cycle on $n$ vertices, the corresponding \f{word} $w_i(\ell)$ is defined as
\[ w_i(\ell) := i \ \ \ i+1 \ \ \ i+2 \ \ \ \dots \ \ \ i+\ell-2 \ \ \ i+\ell-1 \]
where we view each number in $\Z_n$. 
\end{defi}
To each nilpotent quiver representation $X$ we assign a diagram $\vartheta_X$, consisting of the words corresponding to the indecomposable direct summands of $X$. Define $r_j(w)$ as the number of repetitions of the letter $j$ in a word $w$. These numbers can be used to compute the dimension of the space of morphisms between two indecomposable nilpotent representations of the cycle. The linearity of the morphisms allows us to generalise this formula to compute the dimension of the morphism space for all nilpotent representations of the cycle. Here we use the notation $[\, . \, , \, . \, ]$ as abbreviation for the dimension of the space of morphisms $\Hom_{\Delta_n}( \, . \, , \, . \, )$.
\begin{prop}
\label{prop:hom_dim_words}
For two indecomposable nilpotent representations $U_i(\ell)$ and $U_j(k)$ of $\Delta_n$, let $w_i(\ell)$ and $w_j(k)$ be the corresponding words. Then the dimension of the space of morphisms from $U_i(\ell)$ to $U_j(k)$ equals
\begin{align*}
 \big[ U_i(\ell), U_j(k) \big] = \min \big\{ r_i\big(w_j(k) \big), r_{j+k-1}\big(w_i(\ell) \big) \big\} = \ r_{j+k-1}\big(w_i(m) \big). 
\end{align*}
where $m:= \min\{ \ell,k\}$.
\end{prop}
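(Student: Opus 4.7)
The plan is to pick explicit bases and solve for morphisms coefficient by coefficient. Let $e_1,\dots,e_\ell$ be the standard basis of $U_i(\ell)$, with $e_s$ at vertex $i+s-1$ and the arrow maps sending $e_s$ to $e_{s+1}$ (setting $e_{\ell+1}:=0$); fix $f_1,\dots,f_k$ for $U_j(k)$ analogously. A morphism $\phi\colon U_i(\ell)\to U_j(k)$ is then encoded by scalars $c_{s,t}$ with $\phi(e_s)=\sum_t c_{s,t}f_t$, and vertex-matching forces $c_{s,t}=0$ unless $t\equiv s+i-j\pmod n$.

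First I would extract the relations imposed by $\phi\circ M_\alpha=N_\alpha\circ\phi$ on each basis vector. I expect three: (i) the diagonal identity $c_{s+1,t+1}=c_{s,t}$ for $s<\ell$, $t<k$; (ii) the ``fresh coefficient'' vanishing $c_{s,1}=0$ for $s\geq 2$, coming from the fact that no $f_1$ appears on the right-hand side of the commutation; and (iii) the terminal vanishing $c_{\ell,t}=0$ for $t<k$, forced by $\alpha(e_\ell)=0$. Condition (i) groups the allowed coefficients into maximal chains $\{(s_0+r,t_0+r)\}_{r\geq 0}$ inside $[\ell]\times[k]$ along which the value is constant, and a chain contributes one free parameter unless killed by (ii) or (iii), i.e., unless it starts at $(s_0,1)$ with $s_0\geq 2$ or ends at $(\ell,t_{\mathrm{end}})$ with $t_{\mathrm{end}}<k$. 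The contributing chains are therefore exactly those running from $(1,t_0)$ on the left edge to $(s_{\mathrm{end}},k)$ on the top edge. Parametrising such a chain by its endpoint, the fit-in-rectangle condition becomes $s_{\mathrm{end}}\leq\min(\ell,k)=m$ and the residue condition becomes $s_{\mathrm{end}}\equiv j+k-i\pmod n$; counting such $s_{\mathrm{end}}\in[m]$ is, by definition, $r_{j+k-1}(w_i(m))$.

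To get the $\min$ formula, the idea is to compare $r_{j+k-1}(w_i(m))$ with each of the two expressions. Monotonicity of $r_x(w_y(-))$ in the length gives $r_{j+k-1}(w_i(m))\leq r_{j+k-1}(w_i(\ell))$, with equality when $m=\ell$. For the other bound, the reflection $s\mapsto k+1-s$ on $[k]$ identifies $\{s\in[k]:s\equiv j+k-i\pmod n\}$ with $\{t\in[k]:t\equiv i-j+1\pmod n\}$, and restricting to the subrange $s\in[m]$ yields $r_{j+k-1}(w_i(m))\leq r_i(w_j(k))$, with equality when $m=k$. Since $m\in\{\ell,k\}$, at least one of the two inequalities is an equality, and the minimum coincides with $r_{j+k-1}(w_i(m))$. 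The main obstacle I foresee is purely notational: keeping the two cyclic indexings (chain position versus cycle vertex) straight and verifying that the boundary conditions (ii) and (iii) excise exactly the right chains without off-by-one errors. The diagonal bookkeeping and the reflection itself are then routine.
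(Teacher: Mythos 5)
Your argument is correct; I checked the three relations you extract from the commutation condition (the diagonal identity, the vanishing of $c_{s,1}$ for $s\geq 2$, and the terminal vanishing $c_{\ell,t}=0$ for $t<k$), the identification of the surviving diagonal chains with the residues $s_{\mathrm{end}}\equiv j+k-i\pmod n$ in $[m]$, and the reflection argument for the $\min$ identity — the bookkeeping all works out, including the boundary cases $\ell=1$ and $k=1$. It is, however, a genuinely different route from the paper: the paper gives no proof of this proposition at all, remarking only that it is a reformulation of a result of Hubery on Ringel--Hall algebras of cyclic quivers \cite[Theorem~16~(1)]{Hubery2010}. Your explicit-basis computation therefore buys a self-contained, elementary verification and makes transparent where the word-combinatorial count actually comes from: the free parameters of a morphism are exactly the diagonal chains in the $\ell\times k$ grid that start in the first row and end in the last column, and these are enumerated by occurrences of the letter $j+k-1$ in $w_i(m)$. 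What the citation buys instead is brevity and the broader Hall-algebra context in which the formula sits. If you write this up, the only care needed is the one you already flag: keeping the position-along-the-segment indexing separate from the vertex indexing in $\Z_n$, and the off-by-one in the congruence $t\equiv s+i-j\pmod n$.
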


This proposition is just a different formulation of a result by A.~Hubery \cite[Theorem 16 (1)]{Hubery2010}. It is also possible to compute the dimension of the space of morphisms by counting certain repetitions of the letter $i$ in the word corresponding to $U_j(k)$. Here we have to exclude the repetitions coming before $\max\{0, m-\ell\}$ such that the parametrisation of the word, wherein we have to count the repetitions, becomes more complicated. Hence we exclude this case from the proposition.

We can use word combinatorics to compute the dimension of the space of morphisms from an arbitrary representation in $\mr{R}_\mathbf{e}(\Delta_n,\mr{I}_N)$ to an indecomposable representation of maximal length.
\begin{prop}
\label{prop:top_hom_dim}
Let $M \in \mr{R}_\mathbf{e}(\Delta_n,\mr{I}_N)$. Then
$[M,U_i(N)] = e_{i+N-1}$ for all $i \in  \Z_n$.
\end{prop}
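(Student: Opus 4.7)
The plan is to reduce the statement to a direct application of Proposition~\ref{prop:hom_dim_words} after decomposing $M$ into indecomposable summands. Since $M$ lies in $\mr{R}_{\mathbf{e}}(\Delta_n,\mr{I}_N)$, it is a nilpotent representation bounded by $\mr{I}_N$, so by Krull--Schmidt and the classification recalled before Proposition~\ref{prop:proj_inj_dual} we may write $M \cong \bigoplus_{k} U_{i_k}(\ell_k)$ with each $\ell_k \le N$. Additivity of $\Hom$ in the first argument then gives
\[ [M,U_i(N)] = \sum_{k} [U_{i_k}(\ell_k),U_i(N)]. \]

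Next I would apply Proposition~\ref{prop:hom_dim_words} to each summand. The minimum appearing there is $m=\min\{\ell_k,N\}=\ell_k$, so the formula simplifies to
\[ [U_{i_k}(\ell_k),U_i(N)] = r_{i+N-1}\bigl(w_{i_k}(\ell_k)\bigr). \]
Thus
\[ [M,U_i(N)] = \sum_{k} r_{i+N-1}\bigl(w_{i_k}(\ell_k)\bigr). \]

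Finally, I would interpret the right-hand side. By definition of the word $w_{i_k}(\ell_k)$ and of the coefficient quiver, each occurrence of the letter $i+N-1$ in $w_{i_k}(\ell_k)$ corresponds to exactly one basis vector of the vector space sitting at vertex $i+N-1$ in the summand $U_{i_k}(\ell_k)$. Summing these occurrences over all indecomposable summands therefore counts the dimension of the vector space of $M$ at vertex $i+N-1$, which by definition of the dimension vector is $e_{i+N-1}$. This yields the claimed equality.

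There is no serious obstacle here; the only point that requires care is ensuring that $\ell_k \le N$ so that the cleaner form of Proposition~\ref{prop:hom_dim_words} (avoiding the complication mentioned in the remark following it) applies to every summand. This is guaranteed by the hypothesis $M \in \mr{R}_{\mathbf{e}}(\Delta_n,\mr{I}_N)$, so the argument goes through uniformly.
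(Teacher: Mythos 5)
Your argument is correct and follows essentially the same route as the paper's own proof: decompose $M$ into indecomposable nilpotent summands, use additivity of $\Hom$, apply Proposition~\ref{prop:hom_dim_words} with $m=\min\{\ell_k,N\}=\ell_k$, and identify $r_{i+N-1}\bigl(w_{i_k}(\ell_k)\bigr)$ with the $(i+N-1)$-th entry of the dimension vector of the summand. No gaps.
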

\begin{rem}
Here it is important that the indecomposable representation $U_i(N)$ is of maximal length, or at least longer than every summand of $M$. Otherwise, $M$ could contain $U_{i-1}(N+1)$, which contradicts the statement of the proposition because
\( [U_{i-1}(N+1),U_i(N)] = 0 \)
by Proposition~\ref{prop:hom_dim_words}, since $r_{i+N}\big(w_i(N) \big) = 0$. For the injective labelling of the indecomposable representations $U(j;N) := U_{j-N+1}(N)$, the statement of the proposition reads as 
\( [M,U(j;N)] = e_{j} \ \mr{for} \ \mr{all} \ j \in  \Z_n.  \)
\end{rem}
\begin{proof}[Proof of Proposition \ref{prop:top_hom_dim}]
Every representation $M \in \mr{R}_\mathbf{e}(\Delta_n,\mr{I}_N)$ is nilpotent and can be written as a direct sum of indecomposable nilpotent representations, namely
\[ M \cong \bigoplus_{j \in\Z_n} \bigoplus_{\ell \in [N]} U_j(\ell) \otimes \Kbb^{m_{j,\ell}}. \]
Thus 
\[ [M,U_i(N)] = \sum_{j \in\Z_n} \sum_{\ell \in [N]} m_{j,\ell} \cdot \dim \, [U_j(\ell),U_i(N)] \]
because the morphisms of quiver representations are linear maps between finite dimensional vector spaces. 

Now it suffices to show that \( [U_j(\ell),U_i(N)] = d_{i+N-1}\), where $\mathbf{d}:= \bdim U_j(\ell)$. We want to apply Proposition~\ref{prop:hom_dim_words}. By assumption we know that $\ell \leq N$, and hence we have to count the repetitions of the vertex $i+N-1$ (which is the end point of $U_i(N)$) in the word $w_j(\ell)$ corresponding to $U_j(\ell)$. This value is given by the ${(i+N-1)}$-th entry of the dimension vector of $U_j(\ell)$.  
\end{proof}
By \cite[Lemma 2.4]{CFR2012}, the dimension of the stratum of $U \in \mr{Gr}^{\Delta_n}_{\mb{e}}(X \oplus Y)$ is
\[ \dim \mathcal{S}_U = [ U, X \oplus Y ]- [ U,U].\] For the first part, we can apply Proposition~\ref{prop:top_hom_dim}, and obtain
\( [U, X \oplus Y ] = [X, X \oplus Y ] \)
for all $U \in \mr{Gr}^{\Delta_n}_{\mb{e}}(X \oplus Y)$, because $X$ and $Y$ consist of summands of the form $U_i(N)$, and the morphisms are linear. To compute the dimension of the quiver Grassmannian, we are interested in the value of  
\( [U,U], \)
and want to find the elements of the quiver Grassmannian minimising it. On the variety of quiver representations $\mr{R}_\mathbf{e}(\Delta_n,\mr{I}_N)$, we have an action of the group $\mr{G}_\mb{e}$. The dimension of the $\mr{G}_\mb{e}$-orbit of $U \in \mr{R}_\mathbf{e}(\Delta_n,\mr{I}_N)$ is computed as 
\( \dim \, \mr{G}_\mb{e}.U = \dim \, \mr{G}_\mb{e} - [U,U].\) 
Thus we can compute maximisers for the dimension of the $\mr{G}_\mb{e}$-orbit in $\mr{R}_\mathbf{e}(\Delta_n,\mr{I}_N)$, among the elements of the Grassmannian $\mr{Gr}^{\Delta_n}_{\mb{e}}(X \oplus Y)$, in order to find the strata of highest dimension in this quiver Grassmannian.
\begin{rem}
Let $Q$ be a Dynkin quiver and let $X$, $Y$ be exceptional representations of $Q$ such that $\mr{Ext}^1_Q(X,Y)=0$. Then
\( [ U, U ] \geq [ X, X ] \) 
holds for all $U \in \mr{Gr}^Q_{\mb{e}}(X \oplus Y)$, i.e. the dimension of the orbit of $X$ in $\mr{R}_\mathbf{e}(Q)$ is maximal among all elements of the quiver Grassmannian. Thus we obtain
\( \dim \mr{Gr}_{\mb{e}}^Q(X \oplus Y) = [ X, Y ], \)
and additionally the quiver Grassmannian is the closure of the stratum of $X$. For more details on this see \cite[Section~3.1]{CFR2012}.
\end{rem}
Unfortunately, this does not work in the same way for the equioriented cycle. But in some special cases at least the dimension formula holds. 
\begin{prop}
\label{prop:hom_ineq}
Let $N = \omega \cdot n$ for $\omega \in \N$. Then
\( [ U, U ] \geq [ X, X ] \)
holds for all $U \in \mr{Gr}^{\Delta_n}_{\mb{e}}(X \oplus Y)$.
\end{prop}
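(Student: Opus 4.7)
The plan is to reduce the inequality to the fact that $X$ is rigid as a bounded module, combined with the classification of irreducible components of the representation variety of the cyclic quiver.

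First I would compute $[X,X]$ explicitly. Since $P_i\cong U_i(N)$ by Proposition~\ref{prop:proj_inj_dual} and the word $w_i(N)$ covers each vertex of $\Z_n$ exactly $\omega$ times (because $N=\omega n$), Proposition~\ref{prop:hom_dim_words} yields $[P_i,P_{i'}]=\omega$, and summation with multiplicities gives $[X,X]=\omega|\mb{x}|^2$. By Proposition~\ref{prop:top_hom_dim} and the dual statement (which follows immediately from Hubery's formula applied to the projective-injective $P_i$), we also obtain $[U,X]=[X,U]=\omega|\mb{x}|^2$ for every $U$ with $\bdim U=\mb{e}=\omega|\mb{x}|\mathbf{1}$, so the value to be compared with $[U,U]$ depends only on the dimension vector.

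Next I would use that $X=\bigoplus_i P_i^{x_i}$ is projective in $A_{n,N}\text{-}\mr{mod}$ and therefore rigid: $\mr{Ext}^1_{A_{n,N}}(X,X)=0$. Voigt's lemma for module varieties then implies that the $\mr{G}_\mb{e}$-orbit $\mathcal{O}_X$ is open in its irreducible component of $\mr{R}_\mb{e}(\Delta_n,\mr{I}_N)$. Via the framed moduli interpretation of Theorem~\ref{trm:framed-module} and the bijection in Theorem~\ref{trm:group-action}, this openness transports to the quiver Grassmannian: the stratum $\mathcal{S}_X$ is open in an irreducible component of $\mr{Gr}^{\Delta_n}_\mb{e}(X\oplus Y)$, and by the stratum dimension formula of \cite[Lemma~2.4]{CFR2012} combined with step one this component has dimension $[X,X\oplus Y]-[X,X]=[X,Y]$.

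To conclude, I would argue that every other irreducible component of the Grassmannian arises in the same way, namely with a top stratum $\mathcal{S}_{X'}$ for some $X'=\bigoplus_i P_i^{x_i'}$ with $\sum x_i'=|\mb{x}|$. All such $X'$ satisfy $[X',X']=\omega|\mb{x}|^2$ by the computation above, so all top strata share the common dimension $[X,Y]$. Upper semicontinuity of $[\cdot,\cdot]$ on orbit closures, pulled back from $\mr{R}_\mb{e}(\Delta_n,\mr{I}_N)$ via Theorem~\ref{trm:group-action}, then forces $[U,U]\geq[X',X']=[X,X]$ for every $U\in\mr{Gr}^{\Delta_n}_\mb{e}(X\oplus Y)$. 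The main obstacle is identifying the generic modules of every irreducible component of $\mr{R}_\mb{e}^{(\mb{d})}(\Delta_n,\mr{I}_N)$ with projective direct sums of the $P_i$'s; this rests on Kempken's~\cite{Kempken1982} analysis of the variety of quiver representations of the equioriented cycle, and once it is in place the semicontinuity step is routine.
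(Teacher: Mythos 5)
Your overall strategy --- reduce everything to the claim that the generic modules of the relevant components are direct sums of the projectives $P_i$, then conclude by upper semicontinuity of $[\,\cdot\,,\,\cdot\,]$ along orbit degenerations --- is in the same spirit as the paper's proof, and your preliminary computations ($[X,X]=\omega k^2$ with $k=\sum_i x_i$, the openness of $\mathcal{O}_X$ via rigidity of projectives, the constancy of $[U,X\oplus Y]$ on the Grassmannian) are all correct. The problem is that the step you defer as ``the main obstacle'' is not an obstacle you can outsource: it \emph{is} the proposition. You need that every $U\in\mr{Gr}^{\Delta_n}_{\mb e}(X\oplus Y)$ lies in the orbit closure of some $X'=\bigoplus_i P_i\otimes\Kbb^{x_i'}$ which itself embeds into $X\oplus Y$. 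Kempken's thesis \cite{Kempken1982} supplies a combinatorial criterion for when one given nilpotent orbit degenerates to another (her Satz~5.5), but not the classification of components of $\mr R^{(\mb d)}_{\mb e}(\Delta_n,\mr I_N)$ in the form you invoke; in the paper that classification (Lemma~\ref{lma:irr_comp}) is deduced \emph{from} Proposition~\ref{prop:hom_ineq}, not used to prove it, so as written your argument is close to circular.

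The missing content is an explicit construction, and it is also exactly where the hypothesis $N=\omega n$ enters --- your proposal uses it only to get $[P_i,P_{i'}]=\omega$, which should worry you, since the statement fails for $N\neq\omega n$ (see the counterexample with $n=4$, $N=5$ given after the proposition, where a $U$ with $[U,U]<[X,X]$ exists and hence cannot lie in any projective orbit closure). Concretely: if $U$ has a summand of length $<N$, the homogeneity of $\mb e=\bdim X$ (which holds because each $\bdim P_i$ is the constant vector $\omega\cdot\mathbf 1$, i.e.\ because $n\mid N$) forces two such summands $U_i(\ell)$, $U_j(k)$ whose words overlap as $w_1w_2$ and $w_2w_3$; replacing them by $w_1w_2w_3$ and $w_2$ yields $\hat U$ with $\mathcal O_U\subset\overline{\mathcal O_{\hat U}}$ (this is the point where Satz~5.5 is actually used), and a segment-wise embedding argument shows $\hat U$ still embeds into $X\oplus Y$. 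Iterating terminates at a direct sum of the $P_i$ with $[\hat U,\hat U]=[X,X]$, again only because $N=\omega n$. Without this construction your semicontinuity step has nothing to degenerate from.
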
 
There are counter examples for this result if $N \neq \omega \cdot n$. For instance let $n=4, N=5$ and $X= U_3(5) \oplus U_4(5)$, $Y= U_1(5) \oplus U_1(5)$, $U= U_2(4) \oplus U_2(4) \oplus U_3(2)$. Then $U \in \mr{Gr}^{\Delta_n}_{\mb{e}}(X \oplus Y)$ for $\mb{e}:= \bdim X$ and $[U,U]=5 < 6 = [X,X]$, which is computed with the word combinatorics as above. The indecomposable representation $U_i(\omega n)$ has length $\omega n$, which means that it is winding around the cycle on $n$ points exactly $\omega$ times. For this reason we refer to $\omega$ as winding number. The proof of this statement is based on the characterisation of minimal degenerations of orbits in the variety of quiver representations as given by G.~Kempken in \cite{Kempken1982}. 
\begin{proof}[Proof of Proposition~\ref{prop:hom_ineq}]
The idea of the proof is to show that for every element $U \in \mr{Gr}^{\Delta_n}_{\mb{e}}(X \oplus Y)$, its $\mr{G}_\mb{e}$-orbit in the variety of quiver representations is the degeneration of an orbit with the same codimension as the $\mr{G}_\mb{e}$-orbit of $X$, or already has the same codimension. 
In the first case, if the $\mr{G}_\mb{e}$-orbit of $U$ is contained in the closure of a different $\mr{G}_\mb{e}$-orbit, the codimension of $\mathcal{O}_U := \mr{G}_\mb{e}.U$ is strictly bigger. The following property of a subrepresentation helps us to decide how far the representation is degenerate from $X$. For $U \in \mr{Gr}_{\mb{e}}^{\Delta_n}(X \oplus Y)$ define
\[ S(U) := \big\{ U_i(\ell) \subseteq U \ \mr{direct} \ \mr{summand} \ : \ell < N \big\}. \]
This set includes all direct summands of $U$, which are not of maximal length. If the set $S(U)$ is empty, we directly obtain 
\([ U, U ] = [ X, X ], \)
since 
\( [ U_i(N),U_j(N) ] =\omega \)
for all $i,j \in \Z_n$ and $N= \omega \cdot n$. This follows from Proposition~\ref{prop:top_hom_dim} if we set $M = U_i(N)$. 

Now let $U \in \mr{Gr}_{\mb{e}}^{\Delta_n}(X \oplus Y)$ be given such that $S(U) \ne \emptyset$. Since all entries of the dimension vector $\bdim U$ are equal, and $U$ consists of indecomposable summands of lengths at most $N$, the set $S(U)$ has to contain at least two elements.

We can find $U_i(\ell), U_j(k) \in S(U)$ and can assume without loss of generality that $j$ is contained in the word $w_i(\ell+1)$. This pair has to exist because otherwise the dimension vector of $U$ could not be homogeneous, i.e. all entries being equal. By changing the labelling of the two representations, we can ensure that they satisfy the relation we want. Let $w_2$ be the overlap of the words $w_i(\ell)$ and $w_j(k)$. We can write them as $w_i(\ell) =w_1w_2$ and $w_j(k) = w_2w_3$, where it is possible that $w_2$ is the empty word. We define the representation
\[ \hat{U} := U \oplus U_i(i-j+\ell+k) \oplus U_j(j-i) \setminus U_i(\ell) \setminus U_j(k). \]
Here $\setminus \, U_i(\ell)$ means that we do not take the direct summand $U_i(\ell)$ of $U$ for the definition of $\hat{U}$. We have to show that $U$ is a degeneration of $\hat{U}$ and that $\hat{U}$ is contained in the quiver Grassmannian.

In the terminology of words the representation $U_i(i-j+\ell+k)$ corresponds to $w_1w_2w_3$ and $U_j(j-i)$ corresponds to $w_2$. We can assume that the word $w_1w_2w_3$ has not more than $N$ letters, because there have to exist  $w_i(\ell) =w_1w_2$ and $w_j(k) = w_2w_3$ such that this is satisfied. Without such words it would not be possible that the dimension vector of $U$ is homogeneous and that all words corresponding to it have at most $N$ letters.

By construction $U$ and $\hat{U}$ have the same dimension vector. It is sufficient to find an arbitrary embedding of $\hat{U}$ into $X \oplus Y$, to show that $\hat{U}$ is contained in the quiver Grassmannian. The easiest way to do this is to identify a segment wise embedding, i.e. $\iota : U(i;\ell) \hookrightarrow U(i;k)$ for $k \geq \ell$. For nilpotent representations of the equioriented cycle, the existence of segment wise embeddings is equivalent to the existence of arbitrary embeddings. This follows from the structure of the Auslander Reiten quiver for the indecomposable nilpotent representations of the equioriented cycle. 

For example, this is computed using the knitting algorithm \cite[Chapter 3.1.1]{Schiffler2014}. Given a fixed nilpotence parameter $N$, we take an equioriented type A quiver of length $2Nn$ and identify the vertex $i$ with all repetitions $i+kn$. Similarly we identify vertices in the Auslander Reiten quiver of type A and obtain the Auslander Reiten quiver for the equioriented cycle. 

Hence there exists a segment wise embedding of $U$ into $X \oplus Y$. From the structure of segment wise embeddings of indecomposable representations of the cycle as introduced above, we know that $U_i(i-j+\ell+k)$ embeds into the same $U_p(N)$ as $U_j(k)$, and $U_j(j-i)$ embeds into the same $U_q(N)$ as $U_i(\ell)$. It follows that the representation $\hat{U}$ embeds into the same summands of $X \oplus Y$ as $U$. 

Following \cite[Satz~5.5]{Kempken1982} by G.~Kempken, the orbit of $U$ is a degeneration of the orbit $\mathcal{O}_{\hat{U}}$, i.e. $ \mathcal{O}_U \subset \overline{\mathcal{O}_{\hat{U}}}$. Hence we obtain $\dim \mathcal{O}_U < \dim \mathcal{O}_{\hat{U}}$, since $U$ and $\hat{U}$ are not isomorphic because their diagrams of words are constructed such that they do not contain the same words. This degeneration might not be minimal, but here it is not of interest to find minimal degenerations. Thus we do not have to satisfy the restrictions on the words in \cite[Satz~5.5]{Kempken1982}. 

Since all the vector spaces $U_i$ for $i \in \Z_n$ corresponding to the subrepresentation $U$ are equidimensional, we can apply this procedure starting from any $U$ in the quiver Grassmannian, until we arrive at an $\hat{U} \in \mr{Gr}_{\mb{e}}^{\Delta_n}(X \oplus Y)$ with $S(\hat{U}) = \emptyset$. Thus we obtain 
\( [ U, U ] > [ X, X ], \)
for every $U \in \mr{Gr}_{\mb{e}}^{\Delta_n}(X \oplus Y)$ with $S(U) \ne \emptyset$.  
\end{proof}
For the proof it is crucial that the dimension vector of the subrepresentations is homogeneous, and that the length of the cycle divides the length of the indecomposable projective and injective representations. This is guaranteed by the condition $N= \omega n$. Otherwise we can not assure that the gluing procedure of the words ends in a representation with $S(U) = \emptyset$. In the setting where $N \neq \omega n$, it is not possible to control the minimal codimension of the $\mr{G}_\mb{e}$-orbits.
\subsection{Dimension Formula and Parametrisation of Irreducible Components}
For this subsection, we restrict us to the case $N= \omega n$. The bounded projective and injective representations in $ \mr{rep}_\Kbb(\Delta_n,\mr{I}_{ \omega n})$ will be denoted by $P_i^\omega$ and $I_j^\omega$. Based on Proposition~\ref{prop:hom_ineq} we can compute the dimension of $\mr{Gr}_{\mb{e}}^{\Delta_n}(X \oplus Y)$. In its proof the subrepresentations $U$, with the same codimension as $X$, are characterised. This allows to determine the irreducible components of the quiver Grassmannian.
\begin{lma}
\label{lma:dim_Gr}
Let 
\[ X_\omega := \bigoplus_{i \in \Z_n} P_i^\omega \otimes \Kbb^{x_i}  \ \mr{and} \  Y_\omega := \bigoplus_{j \in \Z_n} I_j^\omega \otimes \Kbb^{y_j}\]
and set $\mb{e}_\omega := \bdim X_\omega$, where $x_i, y_j \in \N$ for all $i,j \in \Z_n$ . The dimension of the quiver Grassmannian is computed as
\[ \dim \mr{Gr}^{\Delta_n}_{\mb{e}_\omega}(X_\omega \oplus Y_ \omega) = \omega k(m-k),\]
where $ k := \sum_{i \in \Z_n} x_i$ and $ m := \sum_{i \in Z_n} x_i + y_i$.
\end{lma}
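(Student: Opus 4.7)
The plan is to combine the stratum dimension formula from \cite[Lemma~2.4]{CFR2012} with Propositions~\ref{prop:top_hom_dim} and~\ref{prop:hom_ineq}. For every $U \in \mr{Gr}^{\Delta_n}_{\mb{e}_\omega}(X_\omega \oplus Y_\omega)$,
\[ \dim \mathcal{S}_U = [U, X_\omega \oplus Y_\omega] - [U, U]. \]
Since all indecomposable summands of $X_\omega \oplus Y_\omega$ have the maximal length $\omega n$, Proposition~\ref{prop:top_hom_dim} yields
\[ [U, X_\omega \oplus Y_\omega] = \sum_{i \in \Z_n} (x_i + y_i)(\mb{e}_\omega)_{i+\omega n - 1} = [X_\omega, X_\omega \oplus Y_\omega], \]
so the first term is constant on the quiver Grassmannian. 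Proposition~\ref{prop:hom_ineq} provides the lower bound $[U, U] \geq [X_\omega, X_\omega]$, with equality attained for instance at $U = X_\omega$ (viewed as a subrepresentation of $X_\omega \oplus Y_\omega$ via the obvious inclusion). Hence
\[ \dim \mathcal{S}_U \leq [X_\omega, X_\omega \oplus Y_\omega] - [X_\omega, X_\omega] = [X_\omega, Y_\omega], \]
and this maximum is achieved.

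The quiver Grassmannian is the union of finitely many strata, each locally closed and irreducible, so its dimension equals the maximal stratum dimension $[X_\omega, Y_\omega]$. It then remains to evaluate this Hom-dimension. By Proposition~\ref{prop:proj_inj_dual} we have $P_i^\omega \cong U_i(\omega n)$ and $I_j^\omega \cong U_{j - \omega n + 1}(\omega n)$, and the word $w_i(\omega n)$ contains every letter of $\Z_n$ exactly $\omega$ times. Proposition~\ref{prop:hom_dim_words} therefore gives $[P_i^\omega, I_j^\omega] = \omega$ for all $i, j \in \Z_n$. Using bilinearity of $\Hom_{\Delta_n}$ with respect to direct sums, I would conclude
\[ [X_\omega, Y_\omega] = \sum_{i, j \in \Z_n} x_i\, y_j \cdot \omega = \omega \Bigl(\sum_{i} x_i\Bigr)\Bigl(\sum_{j} y_j\Bigr) = \omega\, k\, (m-k), \]
since $m - k = \sum_j y_j$ by definition.

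The main obstacle has essentially already been disposed of by Proposition~\ref{prop:hom_ineq}: that inequality, which crucially requires $N = \omega n$, is what pins down the generic stratum and prevents a pathology of the sort illustrated by the counterexample recorded right after its statement. Given the machinery already in place, no further ingredients appear to be needed; the argument is a bookkeeping exercise in stratum dimensions and word combinatorics.
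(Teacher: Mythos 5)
Your proposal is correct and follows essentially the same route as the paper: the stratum dimension formula from \cite[Lemma~2.4]{CFR2012}, constancy of $[U,X_\omega\oplus Y_\omega]$ via Proposition~\ref{prop:top_hom_dim}, the bound $[U,U]\geq[X_\omega,X_\omega]$ from Proposition~\ref{prop:hom_ineq}, and the final computation $[X_\omega,Y_\omega]=\omega k(m-k)$ by bilinearity and $[P_i^\omega,I_j^\omega]=\omega$. The only (immaterial) difference is that you evaluate $[P_i^\omega,I_j^\omega]$ directly from the word combinatorics of Proposition~\ref{prop:hom_dim_words}, where the paper invokes Proposition~\ref{prop:top_hom_dim}.
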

\begin{proof}
For better legibility we drop the index $\omega$ in this proof, i.e. $\mb{e} := \mb{e}_\omega$, $X:=X_\omega$ and $Y:=Y_\omega$. By \cite[Lemma 2.4]{CFR2012} we have $\dim \mathcal{S}_{U} = [ U, X \oplus Y ]-[ X, X ]$ for all $U \in \mr{Gr}^{\Delta_n}_{\mb{e}}(X \oplus Y)$. From Proposition~\ref{prop:hom_ineq} we get \( [U, X \oplus Y ] = [X, X \oplus Y ] \) and Proposition~\ref{prop:top_hom_dim} yields \( [ U, U ] \geq [ X, X ] \). It follows that
 \(  \dim \mathcal{S}_U \leq \dim \mathcal{S}_{X} \)
holds for all $U \in \mr{Gr}^{\Delta_n}_{\mb{e}}(X \oplus Y)$. It remains to compute the dimension of the stratum of $X$, which is given by \( \dim \mathcal{S}_{X} = [ X, X \oplus Y ] -[ X, X ] = [ X, Y ]. \)

As in Proposition~\ref{prop:proj_inj_dual} we identify $P_i^\omega \cong U_i(\omega n)$ and $I_j^\omega \cong U_{j-\omega n+1}(\omega n)$, and apply Proposition~\ref{prop:top_hom_dim} to obtain 
\( [ U_i(\omega n),U_j( \omega n) ] = d_{j+ \omega n -1} = \omega \) 
for all $i,j \in \Z_n$ where $\mb{d} := \bdim U_i(\omega n)$. The dimension of the stratum computes as
\begin{align*} 
[ X, Y ] &= \sum_{i \in\Z_n} \sum_{j \in\Z_n} [P_i^\omega \otimes \Kbb^{x_i} ,I_j^\omega \otimes \Kbb^{y_j} ] = \sum_{i \in\Z_n} {x_i} \sum_{j \in\Z_n} {y_j}  [P_i^\omega  ,I_j^\omega  ]\\
&= \sum_{i \in\Z_n} {x_i} \sum_{j \in\Z_n} {y_j} \cdot \omega = \sum_{i \in\Z_n} {x_i} \cdot \omega  \cdot (m-k) =\omega \cdot k(m-k).
\end{align*}  
\end{proof}
Based on the characterisation of the strata with the same codimension as the stratum of $X_\omega$, from the previous section, we obtain the following parametrisation of the irreducible components of the quiver Grassmannians.
\begin{lma}
\label{lma:irr_comp}
The irreducible components of $\mr{Gr}^{\Delta_n}_{\mb{e}_\omega}(X_\omega \oplus Y_ \omega)$ are in bijection with the set
\[ C_k(\mb{d}) := \Big\{ \mb{p} \in \Z_{\geq 0}^{n} : p_i \leq d_i \ \mr{for} \ \mr{all} \ i \in \Z_n, \sum_{i \in \Z_n} p_i = k \Big\}, \]
where $d_i := y_i + x_{i+1}$ and they all have dimension $\omega k(m-k)$. 
\end{lma}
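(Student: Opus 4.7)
The plan is to identify the irreducible components with closures of top-dimensional strata and then to parametrise those strata combinatorially by $C_k(\mb{d})$; the dimension claim $\omega k(m-k)$ is immediate from Lemma~\ref{lma:dim_Gr}.

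I first establish that the top-dimensional strata correspond exactly to those $U$ with $S(U) = \emptyset$. By the proof of Proposition~\ref{prop:hom_ineq}, any $U \in \mr{Gr}^{\Delta_n}_{\mb{e}_\omega}(X_\omega \oplus Y_\omega)$ with $S(U) \neq \emptyset$ satisfies $[U,U] > [X_\omega, X_\omega]$ strictly; combined with $[U, X_\omega \oplus Y_\omega] = [X_\omega, X_\omega \oplus Y_\omega]$ from Proposition~\ref{prop:top_hom_dim}, this forces $\dim \mathcal{S}_U < \omega k(m-k)$. Moreover, iterating the gluing construction of that proof writes every $U$ as a degeneration of some $U^\star$ with $S(U^\star) = \emptyset$, and Theorem~\ref{trm:group-action} transfers this to show that every stratum lies in the closure of some top-dimensional stratum. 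Hence the irreducible components are precisely the closures $\overline{\mathcal{S}_U}$ with $S(U) = \emptyset$, pairwise incomparable because they all share the top dimension.

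It remains to enumerate such $U$ up to isomorphism and to pin down the embedding criterion. Each $U$ with $S(U) = \emptyset$ is a direct sum of length-$\omega n$ indecomposables, which by Proposition~\ref{prop:proj_inj_dual} are exactly the injectives $I_j^\omega$ for $j \in \Z_n$, so I write $U \cong \bigoplus_{j \in \Z_n} I_j^\omega \otimes \Kbb^{p_j}$. Since each $I_j^\omega$ has homogeneous dimension vector $(\omega, \ldots, \omega)$, the equality $\bdim U = \mb{e}_\omega$ reduces to $\sum_j p_j = k$. Using $P_i^\omega \cong I_{i-1}^\omega$ I rewrite the ambient representation as $X_\omega \oplus Y_\omega \cong \bigoplus_{j \in \Z_n} I_j^\omega \otimes \Kbb^{d_j}$ with $d_j = x_{j+1} + y_j$. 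The claim is that such a $U$ embeds into this representation iff $p_j \leq d_j$ for every $j$: the ``if'' direction is componentwise inclusion, while for ``only if'' I use that $U$, being a direct sum of injectives, is itself injective, so any embedding $U \hookrightarrow X_\omega \oplus Y_\omega$ splits and gives $X_\omega \oplus Y_\omega \cong U \oplus V'$, whence Krull-Schmidt forces $p_j \leq d_j$. Distinct $\mb{p} \in C_k(\mb{d})$ produce non-isomorphic $U$'s by Krull-Schmidt, hence distinct strata and distinct irreducible components, completing the bijection.

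The main obstacle is the injectivity-plus-Krull-Schmidt step in the previous paragraph; everything else is bookkeeping on top of Proposition~\ref{prop:hom_ineq} and Theorem~\ref{trm:group-action}. The running hypothesis $N = \omega n$ is essential precisely at this step, because it is exactly what makes the maximal-length indecomposables simultaneously projective and injective (Proposition~\ref{prop:proj_inj_dual}); without it the splitting of $U \hookrightarrow X_\omega \oplus Y_\omega$ would fail, paralleling the counter-example to Proposition~\ref{prop:hom_ineq} noted in the paper.
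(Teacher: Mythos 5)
Your proposal is correct and follows essentially the same route as the paper: top-dimensional strata are identified with the subrepresentations $U$ satisfying $S(U)=\emptyset$ via Proposition~\ref{prop:hom_ineq}, the degeneration/orbit picture of Theorem~\ref{trm:group-action} rules out further components, and the resulting summand multiplicities give $C_k(\mb{d})$. Your justification of the necessity of $p_j\le d_j$ by injectivity of $U$ plus Krull--Schmidt is a clean substitute for the paper's appeal to segment-wise embeddings, but it does not change the overall argument.
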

\begin{rem}
In particular, the number of irreducible components is independent of the winding number $\omega$.
\end{rem}
\begin{proof}
We use the interpretation of the Grassmannian as framed moduli space 
\[ \mr{Gr}_{\mb{e}_\omega}^{\Delta_n}(X_\omega \oplus Y_\omega) \cong \mr{R}_{\mb{e}_\omega,\mb{d}}^s(\Delta_n,\mr{I}_{\omega n})/\mr{G}_{\mb{e}_\omega} \]
as in Theorem~\ref{trm:framed-module}. Here the $i$-th entry of $\mb{d}$ is given by the multiplicity of the injective representation $I_i^\omega$ as summand of $X_\omega \oplus Y_\omega$, and these numbers are independent of the winding number $\omega$. 

The irreducible components of $\mr{Gr}_{\mb{e}_\omega}^{\Delta_n}(X_\omega \oplus Y_\omega)$ are given by the closures of the strata which are maximal in the partial order $\mathcal{S}_U \leq \mathcal{S}_V  :\Leftrightarrow \mathcal{S}_U \subseteq \overline{\mathcal{S}_V}$. We apply Theorem~\ref{trm:group-action} to this setting such that the irreducible components are in bijection with the maximal elements of $\mr{R}_{\mb{e}_\omega}^{(\mb{d})}(\Delta_n,\mr{I}_{\omega n})/\mr{G}_{\mb{e}_\omega}$, with respect to the partial order induced by the inclusion of orbits in orbit closures. 

In the proof of Proposition~\ref{prop:hom_ineq}, we have seen that the maximal elements for this order are parametrised by the $U \in \mr{Gr}_{\mb{e}_\omega}^{\Delta_n}(X_\omega \oplus Y_\omega)$ such that $S(U) = \emptyset$, i.e. all summands of $U$ have the same dimension vector, and are of the form $U_i(\omega n)$ for some $i \in\Z_n$. Accordingly, these subrepresentations are representatives for strata whose closures give the irreducible components. The set of all such subrepresentations of $X_\omega \oplus Y_\omega$, with dimension vector $\mb{e}_\omega$, is parametrised by the set $C_k(\mb{d})$. 
For every tuple $\mb{p} \in C_k(\mb{d})$, define the representation
\[ U(\mb{p}) := \bigoplus_{i \in \Z_n} U(i;N)\otimes \Kbb^{p_i}.\] 
The assumption \(\sum_{i \in \Z_n} p_i = k \) ensures that the dimension vector of $U(\mb{p})$ is $\mb{e}$, since the dimension vector of $U(i;N)$ has all entries equal to $\omega$. The restriction
\[p_i  \in \{0,1,\dots, d_i \} \ \mr{for} \ \mr{all} \ i \in \Z_n\]
is necessary to guarantee that the representation $U(\mb{p})$ corresponding to the tuple $\mb{p}$ embeds into $X_\omega \oplus Y_\omega$. The dimension of the irreducible components is computed in the same way as done for the stratum $ \mathcal{S}_{X_\omega}$ in Lemma~\ref{lma:dim_Gr}. 
\end{proof}
For arbitrary $N$, we would have $d_i := y_i + x_{i-N+1}$ but for $N=\omega \cdot n$ the shift by $N$ does not change the index, because it is considered as a number in $\Z_n$.
The number of irreducible components is bounded by $\binom{n+k-1}{k}$ since 
\[ C_k(\mb{d})\subseteq C_k := \Big\{ \mb{p} \in \Z_{\geq 0}^{n} : p_i \leq k \ \mr{for} \ \mr{all} \ i \in \Z_n, \sum_{i \in Z_n} p_i = k \Big\}. \]
The set $C_k$ is the set of all partitions of the number $k$ into at most $n$ parts. Partitioning the number $k$ into at most $n$ parts, is equivalent to choosing $n-1$ points, out of $n+k-1$ points, to be the separators between the $n$ parts of a partition of the remaining $k$ points. The number of these choices is given by 
\[ \vert C_k \vert = \binom{n+k-1}{n-1} = \binom{n+k-1}{k}.\]
For the case $N=n$, this parametrisation of the irreducible components, together with a precise count, is proven in the thesis of N.~Haupt \cite[Proposition 3.6.16]{Haupt2011}. We can use his formula for the case $\omega =1$, to compute the number of irreducible components of the quiver Grassmannian 
\[ \mr{Gr}_{\mb{e}_\omega}^{\Delta_n}\big(X_\omega \oplus Y_\omega\big), \]
because the number of irreducible components is independent of $\omega$.
\subsection{Geometric Properties}\label{sec:geom-prop}
Back in the setting where the indecomposable summands of $X$ and $Y$ have arbitrary but all the same length $N$, we do not have a parametrisation of the irreducible components, but nevertheless they have the following properties.
\begin{lma}
\label{lma:rat_sing}
The irreducible components of $\mr{Gr}^{\Delta_n}_{\mb{e}}(X \oplus Y)$ are normal,\\ Cohen-Macaulay and have rational singularities.
\end{lma}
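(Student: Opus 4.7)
The plan is to leverage the framed moduli interpretation and transport the singularity question from the quiver Grassmannian to the variety of quiver representations, where orbit closures for the equioriented cycle have been analysed by G.~Kempken.

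First I would invoke Theorem~\ref{trm:framed-module} to rewrite
\[ \mr{Gr}^{\Delta_n}_{\mb{e}}(X\oplus Y)\;\cong\;\mr{M}^s_{\mb{e},\mb{d}}(\Delta_n,\mr{I}_N), \]
where the multiplicity vector $\mb{d}$ records the injective summands of $X\oplus Y$ (using Proposition~\ref{prop:proj_inj_dual} to view $P_i$ as an injective representation of the bound quiver). Theorem~\ref{trm:group-action} then supplies a bijection between the $\mr{Aut}_{\Delta_n}(X\oplus Y)$-stable subvarieties of the moduli space and the $\mr{G}_{\mb{e}}$-stable subvarieties of $\mr{R}^{(\mb{d})}_{\mb{e}}(\Delta_n,\mr{I}_N)$, and crucially this correspondence preserves inclusions, closures, irreducibility and types of singularities. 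Therefore it is enough to establish that the relevant $\mr{G}_{\mb{e}}$-stable subvarieties on the representation side are normal, Cohen--Macaulay and have rational singularities.

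Next I would identify which subvarieties these are. Since the strata $\mathcal{S}_U$ of the quiver Grassmannian are exactly the images of the $\mr{G}_{\mb{e}}$-orbits $\mathcal{O}_U\subset\mr{R}^{(\mb{d})}_{\mb{e}}(\Delta_n,\mr{I}_N)$ under the quotient, and an irreducible component is by definition the closure of a stratum not contained in another, the $\mr{G}_{\mb{e}}$-stable subvariety corresponding to an irreducible component is precisely an orbit closure $\overline{\mathcal{O}_M}\subset\mr{R}^{(\mb{d})}_{\mb{e}}(\Delta_n,\mr{I}_N)$ that is maximal in the degeneration order. Hence the lemma reduces to the assertion that every such orbit closure in the representation variety of the bound cycle is normal, Cohen--Macaulay and has rational singularities.

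Finally I would appeal to Kempken's analysis \cite{Kempken1982} of the variety of representations of the cycle, where orbit closures for nilpotent representations of $(\Delta_n,\mr{I}_N)$ are shown to enjoy exactly these geometric properties; the transfer along Theorem~\ref{trm:group-action} then yields the claim for the irreducible components of $\mr{Gr}^{\Delta_n}_{\mb{e}}(X\oplus Y)$. The main obstacle I anticipate is making precise that ``types of singularities'' in Theorem~\ref{trm:group-action} indeed covers normality, the Cohen--Macaulay property and the existence of rational singularities: all three are local and étale-local invariants, so one has to either verify they are preserved under the GIT quotient by a free action of $\mr{G}_{\mb{e}}$ on the stable locus (which is the content of the cited theorem), or argue directly that the quotient map is smooth on $\mr{R}^s_{\mb{e},\mb{d}}(\Delta_n,\mr{I}_N)$ and appeal to faithfully flat descent. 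Once this descent step is in place, the lemma follows by pulling through the three properties one at a time.
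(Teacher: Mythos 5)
Your proposal follows essentially the same route as the paper: pass to the framed moduli space via Theorem~\ref{trm:framed-module}, transfer the question to $\mr{G}_{\mb{e}}$-orbit closures through Theorem~\ref{trm:group-action}, and invoke Kempken's results on orbit closures for the cycle. The only detail the paper adds is that Kempken's Satz~4.5 is stated for the unbounded variety $\mr{R}_{\mb{e}}(\Delta_n)$, so one must also cite her Korollar~2.10 to know that orbit closures of nilpotent representations contain no non-nilpotent points and hence agree with the closures taken inside $\mr{R}_{\mb{e}}(\Delta_n,\mr{I}_N)$; the paper then deduces normality and Cohen--Macaulayness from rational singularities via Kempf rather than transporting all three properties separately.
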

\begin{proof}
In her thesis G.~Kempken shows that the orbit closures inside $\mr{R}_\mb{e}(\Delta_n)$ are normal, Cohen-Macaulay and have rational singularities (compare \cite[Satz 4.5]{Kempken1982} combined with \cite[p. 50]{Kempf1973}). Her result holds for arbitrary representations of $\Delta_n$. We can apply it to orbit closures of nilpotent representations in $\mr{R}_\mb{e}(\Delta_n,\mr{I}_N)$, because by \cite[Korollar 2.10]{Kempken1982} there are no non-nilpotent representations inside these orbit closures. Combining this with Theorem~\ref{trm:group-action}, we get that the closures of the strata in the quiver Grassmannian have rational singularities, which again combined with \cite[p. 50]{Kempf1973} yields that they are normal and Cohen-Macaulay. Applying it to the closures of the strata, which are maximal for the partial order as introduced in Lemma~\ref{lma:irr_comp}, we obtain the desired result. 
\end{proof}
Moreover, G.~Kempken gives a description of the types of singularities which can occur, and she also describes the structure of the orbit closures and the codimension of the minimal degenerations of orbits.
\subsection{Torus Action and Cellular Decomposition}\label{sec:cell-decomp}
From now on, we restrict us to the setting where $\Kbb=\C$. For every nilpotent representation $U \in \mr{rep}_{\C}(\Delta_n)$, there exists a nilpotence parameter $N \in \N$ such that $U \in \mr{rep}_{\C}(\Delta_n, \mr{I}_N)$. Hence by \cite[Theorem~1.11]{Kirillov2016} it is conjugated to a direct sum of indecomposable nilpotent representations, i.e.
\[ U \cong U(\mb{d}) := \bigoplus_{i \in \Z_n} \bigoplus_{\ell = 1}^N U(i;\ell)\otimes \C^{d_{i,\ell}}, \]
where $d_{i,\ell} \in \Z_{\geq 0}$ for all $i \in \Z_n$ and $\ell \in [N]$. 

Let $\mathcal{B}_i := \{ v_{1}^{(i)}, v_{2}^{(i)}, \dots , v_{m_i}^{(i)} \}$ be a basis of the vector space $V_i$ corresponding to the representation \( M:= ( \, (V_i)_{i \in \Z_n},(M_\alpha)_{\alpha \in \Z_n} \,) \cong U(\mb{d}) \) where $\mb{m} := \bdim M $. Here we choose $M$ such that $M$ and $U(\mb{d})$ have the same coefficient quiver. Hence $M$ is just a different parametrisation of the quiver representation $U(\mb{d})$. It is required to give a formal definition for the torus action. A \f{segment} of $M$ is a maximal collection of vectors $\{ v_{k}^{(i)}\} \subseteq \mathcal{B}_\bullet := \cup_{i \in \Z_n}\mathcal{B}_i$ such that there is a unique starting point $v_{k_0}^{(i)}$, and every other element $v_{k'}^{(j)}$ of the collection is computed as
\[ v_{k'}^{(j)} = M_{j-1} \circ \dots \circ M_{i+1} \circ M_i \Big(v_{k_0}^{(i)}\Big). \]
To simplify notation we also use the index $i$ for the arrow $\alpha : i \to i+1$. The segments of $M$ represent its direct summands $U_i(\ell)= U(i+\ell-1;\ell)$.

The number of indecomposable direct summands of $U(\mb{d})$, ending over the vertex $i \in \Z_n$, is given by 
\[ d_i := \sum_{\ell = 1}^N d_{i,\ell}. \]
We rearrange the segments of $M$ such that they end in the $d_i$ last basis vectors over each vertex. In each package of segments ending over some vertex we order them from long to short, i.e. the shortest segment ending over the $i$-th vertex ends in the basis vector $v_{m_i}^{(i)}$. We continue the segments, to the last free basis vectors over the $(i-1)$-th vertex of of $\Delta_n$, such that their initial order by length is preserved. This way of arranging the segments ensures that they do not cross. 

The condition $d(\alpha) := d_{s_\alpha}$ for all $\alpha \in \Z_n$ induces a grading of the vertices in the coefficient quiver of $U(\mb{d})$. Choose the nilpotence parameter $N$ such that there exists an $i \in \Z_n$ such that $d_{i,N} \neq 0$ and define $q_i := m_i - d_i$ for all $i \in \Z_n$. Take $i_0 \in \Z_n$ such that $d_{i_0,N} \geq d_{i,N}$ for all $i \in \Z_n$. Let $j_0 := i_0 -N+1 \mod n$.

Give weight one to $v_1^{(j_0)}$. Compute the weight of the other points on the segment starting in $v_1^{(j_0)}$ using the grading of the arrows as defined above. The end point of this segment is $v_{q_{i_0}+1}^{(i_0)}$ and its weight is \[w := 1 + \sum_{\ell =0}^{N-2} d(\alpha_{j_0+\ell}).\] For $t \in [d_{i_0}]$, the weight of $v_{q_{i_0}+j}^{(i_0)}$ is $w+t-1$. Let $r_i := m_i - d_{i,1}$ be the largest index such that $v_{r_{i}}^{(i)}$ lives on a non-trivial segment and let $k$ be the weight of $v_{r_{i_0}}^{(i_0)}$. If $r_{i_0}=0$, we set $k=0$, since it implies that $r_i=0$ for all $i \in \Z_n$.

For the remaining end points $v_{q_{i}+j}^{(i)}$ with $i \in \Z_n$ and $j \in [d_i]$ their weight is given by $k+j+d_{i,1}-d_i$. The weight of all other vertices is computed along the arrows in opposite direction. This grading is strictly increasing with the indices of basis vectors in each basis $\mathcal{B}_i$. Moreover it is compatible with the assumptions in \cite[Theorem~1]{Cerulli2011}.

Using this grading, we can define a torus action on the quiver representation $M$. An element $\lambda \in T:= \C^*$ acts on every element $b \in \mathcal{B}_\bullet$ of the new ordered basis as
\[ \lambda . b := \lambda^{d(b)} b. \]
By linearity, this action extends to all elements of $M$, and also to the quiver Grassmannian \cite[Lemma 1.1]{Cerulli2011}. This implies that the Euler-Poincar\'e characteristic of the quiver Grassmannian is given by the number of its torus fixed points \cite[Theorem~1]{Cerulli2011}, and that they are parametrised by successor closed subquivers \cite[Proposition 1]{Cerulli2011}. Analogous to \cite[Section~6.4]{CFFFR2017}, this action induces a cellular decomposition of the quiver Grassmannian. 

For a torus fixed point $L \in \mr{Gr}^{\Delta_n}_{\mb{e}}(M)^T$, its \f{attracting set} is defined as
\[ \mathcal{C}(L) := \Big\{ V \in \mr{Gr}^{\Delta_n}_{\mb{e}}(M) : \lim_{\lambda \to 0} \lambda . V = L \Big\} \]
\begin{trm}
\label{trm:cell_decomp-approx-lin-deg-aff-flag}
Let $M = U(\mb{d})$ and $\mb{e} \leq \bdim M$. For every $L \in \mr{Gr}^{\Delta_n}_{\mb{e}}(M)^T$, the subset $\mathcal{C}(L) \subseteq \mr{Gr}^{\Delta_n}_{\mb{e}}(M)$ is an affine space, and the quiver Grassmannian admits a cellular decomposition 
\[  \mr{Gr}^{\Delta_n}_{\mb{e}}(M) = \bigsqcup_{L \in \mr{Gr}^{\Delta_n}_{\mb{e}}(M)^T} \mathcal{C}(L).  \]
\end{trm}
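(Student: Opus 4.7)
The plan is to follow the Bialynicki-Birula-style argument for quiver Grassmannians developed by G.~Cerulli Irelli and pushed through in \cite[Section~6.4]{CFFFR2017} for the type A degenerate flag, adapted to the cyclic setting. First I would verify that the $T=\C^*$-action descends to $\mr{Gr}^{\Delta_n}_{\mb{e}}(M)$: the grading $d$ on $\mathcal{B}_\bullet$ is additive along arrows of the coefficient quiver (the arrow $\alpha$ contributes $d(\alpha)$ to the weight of each target basis vector), so each $M_\alpha$ is $T$-equivariant up to a scalar twist $\lambda^{d(\alpha)}$, and hence the subrepresentation condition $M_\alpha(V_i)\subseteq V_j$ is $T$-invariant. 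Combined with \cite[Proposition~1]{Cerulli2011}, this identifies $\mr{Gr}^{\Delta_n}_{\mb{e}}(M)^T$ with the set of successor closed subquivers $\mathcal{S}\subseteq Q(M)$ of dimension vector $\mb{e}$; each such $\mathcal{S}$ yields a coordinate fixed point $L=(L_i)_{i\in\Z_n}$ with $L_i=\mr{span}(J_i)$ and $J_i:=\mathcal{S}\cap\mathcal{B}_i$.

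Next I would describe $\mathcal{C}(L)$ via a normal form. Because $d$ is strictly increasing along the index within each $\mathcal{B}_i$, every $V_i\subseteq V_i^M$ admits a unique reduced row echelon presentation
\[ w_k^{(i)} \;=\; v_{m_k^{(i)}}^{(i)} \;+\; \sum_{\ell\in F_k^{(i)}} c_{k,\ell}^{(i)}\,v_\ell^{(i)}, \qquad F_k^{(i)} := \{\ell \in \mathcal{B}_i \setminus J_i : \ell > m_k^{(i)}\}, \]
with respect to the ordered basis $\mathcal{B}_i$, and a direct computation shows $\lim_{\lambda\to 0}\lambda\cdot V_i=\mr{span}(J_i)$. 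Thus $V\in\mathcal{C}(L)$ if and only if the pivots of $V_i$ are exactly $J_i$ for every $i$, and the coefficients $c_{k,\ell}^{(i)}$ realise $\mathcal{C}(L)$ as a locally closed subset of the affine space $\C^N$, with $N=\sum_{i,k}|F_k^{(i)}|$, after ignoring the subrepresentation condition.

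Third I would show that the conditions $M_\alpha(V_i)\subseteq V_j$ cut out an affine subspace of this $\C^N$, which is the crux of the argument. Since $M_\alpha$ sends each basis vector to another basis vector or to zero, applying it to $w_k^{(i)}$ produces a vector whose leading (lowest-weight) term is either zero or $v_{\pi(m_k^{(i)})}^{(j)}$; because $L$ is itself a subrepresentation, $\pi(m_k^{(i)})\in J_j$ whenever non-zero, so it equals the pivot of a unique $w_{k^*}^{(j)}$. Matching $M_\alpha(w_k^{(i)})$ with the linear combination $\sum_{k'}\alpha_{k,k'}w_{k'}^{(j)}$ and reading off the coefficients in $\mathcal{B}_j\setminus J_j$ gives a system of equations on the $c$-parameters. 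The hard part is verifying that these equations, while a priori containing cross-products of $c^{(i)}$ and $c^{(j)}$, reduce to affine-linear equations after accounting for the weight constraints: the monotonicity of $d$ together with the pivot structure forces each non-pivot coefficient equation to have a single free variable appearing with coefficient $1$, determined by the pivot of $L$, while all would-be quadratic contributions arise only from pairs of indices that are excluded by the echelon form. Once this triangularity is established, $\mathcal{C}(L)$ is the zero locus of affine-linear equations in $\C^N$, hence an affine space.

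Finally, for the decomposition, every $V\in\mr{Gr}^{\Delta_n}_{\mb{e}}(M)$ has a well-defined limit $\lim_{\lambda\to 0}\lambda\cdot V$ by projectivity, this limit is a torus fixed point, and $V\in\mathcal{C}(L)$ if and only if it equals $L$; so $\mr{Gr}^{\Delta_n}_{\mb{e}}(M)=\bigsqcup_{L}\mathcal{C}(L)$ is a disjoint union. The main obstacle throughout is Step three: controlling the non-linear behaviour of the subrepresentation equations and showing, via a careful index-chasing argument in the coefficient quiver, that the grading forces affine-linearity. This is in direct analogy with \cite[Section~6.4]{CFFFR2017}, with the novelty that indecomposable summands of $M$ can wind multiple times around $\Delta_n$, so the analysis must track pivot positions modulo $n$ along long segments.
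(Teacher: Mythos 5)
Your overall architecture matches the paper's: identify the fixed points with successor closed subquivers, put each $V_i$ in echelon form with pivots in $J_i$ relative to the weight-ordered basis, and analyse the equations imposed by $M_\alpha(V_i)\subseteq V_j$. The problem is in your Step three, which you yourself identify as the crux. Your claimed mechanism --- that ``all would-be quadratic contributions arise only from pairs of indices that are excluded by the echelon form'' so that $\mathcal{C}(L)$ is cut out by \emph{affine-linear} equations --- is false. If you carry out the matching of $M_\alpha(w_k^{(i)})$ against $w_{k^*}^{(j)}+\sum \mu_{\ell,k}^{(i)}w_{\ell'}^{(j)}$ explicitly, the coefficient of a non-pivot basis vector $v_j^{(j)}$ receives genuine cross-terms of the form $\mu_{\ell,k}^{(i)}\mu_{j',\ell'}^{(i+1)}$, and these survive; the resulting system is quadratic, not linear. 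What actually saves the argument (and what the paper does) is not linearity but \emph{triangularity}: one orders the parameters $\mu_{\ell,k}^{(i)}$ totally, starting from the end points of the segments, and checks that each equation expresses exactly one parameter $\mu_k$ as a polynomial in parameters strictly later in the order. The solution set is then the graph of a polynomial map over the affine space of free parameters, hence an affine space --- no linearity needed or available. You mention triangularity, but you derive it from the false premise that the quadratic terms vanish, so as written the key step does not go through; you need instead the index-ordering argument on the coefficient quiver.

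A secondary omission: for the decomposition you only argue that every point flows to a unique fixed point, which gives a set-theoretic partition into locally closed affine cells. To get a cellular decomposition in the usual sense (a filtration by closed subsets whose successive differences are the cells) one also needs that the attracting sets can be totally ordered so that each partial union is closed; the paper obtains this $\alpha$-partition property from Carrell's Lemma~4.12. You should either invoke such a result or make explicit that your notion of cellular decomposition requires only the partition.
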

In Particular, these quiver Grassmannians have property (S) \cite[Definition 1.7]{CLP1988}. For Dynkin qiuvers and acyclic orientations of affine Dynkin diagrams, the existence of a cellular decomposition is proven in \cite{CEFR2018}. Using similar methods, it might also be possible to prove it for the equioriented cycle. But this explicit parametrisation of the cells allows us to use the combinatorics of coefficient quivers to study geometric properties of the quiver Grassmannians, like Poincar\'e polynomials or moment graphs for the action of a bigger torus.	
\begin{proof}
By \cite[Lemma~4.12]{Carrell02}, there exists a total order of the fixed points 
$ \mr{Gr}^{\Delta_n}_{\mb{e}}(M)^T = \{L_1, \dots, L_r \}$ such that the decomposition as in the statement of the theorem is an $\alpha$-partition, i.e. 
\( \bigsqcup_{j=1}^s \mathcal{C}(L_j) \)
is closed in $\mr{Gr}^{\Delta_n}_{\mb{e}}(M)$ for all $s \in [r]$. Hence it remains to show that the $\mathcal{C}(L)$ are isomorphic to affine spaces.

It follows from the definition of quiver Grassmannians that
\[\mr{Gr}^{\Delta_n}_{\mb{e}}(M) = \Big\{ (V_i)_{i \in \Z_n} \in \prod_{i \in \Z_n} \mr{Gr}_{e_i}({m_i}) \ : \ M_iV_i \subseteq V_{i+1} \fa i \in \Z_n \Big\},\]
where $\mr{Gr}_{e}({m})$ is the Grassmannian of $e$-dimensional subspaces in $\C^{m}$. First we show that the attractive sets 
\(\mathcal{C}(L^{(i)}) := \mathcal{C}(L) \cap \mr{Gr}_{e_i}({m_i}) \) are isomorphic to affine spaces.

By the structure of the $\C^*$-fixed points as described in \cite[Theorem~1]{Cerulli2011}, there exists an index set $K_i \in \binom{[m_i]}{e_i}$ such that $L^{(i)}$ is the span of the $v^{(i)}_k$ for $k \in K_i$. From the structure of the $\C^*$-action we deduce that a point $V_i$ in the attracting set $\mathcal{C}(L^{(i)})$ has generators
\[  w_{k}^{(i)} = v_{k}^{(i)} + \sum_{\ell \in [m_i] \setminus  [k] \, : \ \ell \notin K_i} \mu_{\ell,k}^{(i)} v_\ell^{(i)}\]
for $k \in K_i$ and $\mu_{\ell,k}^{(i)} \in \C$. Hence $\mathcal{C}(L^{(i)})$ is an affine space. Observe that for a representation $V$ in an attracting set of a $\C^*$-fixed point $L$, it holds that
\begin{align*}
V \in \mathcal{C}(L) \Leftrightarrow  V \in  \mr{Gr}^{\Delta_n}_{\mb{e}}(M)  \cap \prod_{i \in\Z_n} \mathcal{C}(L^{(i)}) . 
\end{align*}

Now we describe the equations arising from the condition $M_iV_i \subseteq V_{i+1}$. By the arrangement of the segments in $Q(M)$, it follows that $M_i w_{k}^{(i)}=0$ if $M_i v_{k}^{(i)}=0$. In this case there are no relations. Assume $M_i v_{k}^{(i)} \neq 0$ and let $k' \in K_{i+1}$ be such that $M_i v_{k}^{(i)}=v_{k'}^{(i+1)}$. Analogously, we define the index set $K_i' \subseteq [m_{i+1}]$. Then $M_i w_{k}^{(i)}$ equals
\[  M_iv_{k}^{(i)} + \sum_{\substack{\ell\in [m_i] \, : \\ \ell \notin [k],  \\ \ell \notin K_i}} \mu_{\ell,k}^{(i)} M_iv_\ell^{(i)}= v_{k'}^{(i+1)} + \sum_{\substack{\ell \in [m_i] \setminus  [k] \, : \\M_iv_\ell^{(i)} \neq 0, \\ \ell' \notin K_{i+1}}} \mu_{\ell,k}^{(i)} \,v_{\ell'}^{(i+1)} +\sum_{\substack{\ell \in [m_i] \setminus  [k] \, : \\ M_iv_\ell^{(i)} \neq 0, \\ \ell' \in K_{i+1} \setminus K_i' }} \mu_{\ell,k}^{(i)} \,v_{\ell'}^{(i+1)}. \]
Since $Q(L)$ is successor closed, we have $K_i' \subseteq K_{i+1}$, as the indices of the end points of segments in $K_i$ play no role in the set $K_i'$. The vector $M_i w_{k}^{(i)}$ is included in the span of the $w_{q}^{(i+1)}$ for $q \in K_{i+1}$ if and only if it equals 
\begin{align*}
&w_{k'}^{(i+1)}+ \sum_{\substack{\ell \in [m_i] \setminus  [k] \, : \\ M_iv_\ell^{(i)} \neq 0, \\ \ell' \in K_{i+1} \setminus K_i'}} \mu_{\ell,k}^{(i)} \, w_{\ell'}^{(i+1)}\\
= \, &v_{k'}^{(i+1)}+ \sum_{\substack{\ell \in [m_i] \setminus  [k] \, :  \\ M_iv_\ell^{(i)} \neq 0, \\ \ell' \in K_{i+1} \setminus K_i'}} \mu_{\ell,k}^{(i)}  v_{\ell'}^{(i+1)} + \sum_{\substack{j \in [m_{i+1}]: \\ j \notin  [k'], \\  j \notin K_{i+1}}} \Big( \mu_{j,k'}^{(i+1)} + \sum_{\substack{\ell \in [m_i] \setminus  [k] \, :  \\ M_iv_\ell^{(i)} \neq 0, \, \ell' < j, \\ \ell' \in K_{i+1} \setminus K_i'}} \mu_{\ell,k}^{(i)} \mu_{j,\ell'}^{(i+1)} \Big) v_{j}^{(i+1)}.
\end{align*}
This is equivalent to the equations
\begin{align*}
\mu_{j,k}^{(i)} &= \mu_{j',k'}^{(i+1)} + \sum_{\substack{\ell \in [j-1] \setminus [k]\, :  \\ M_iv_\ell^{(i)} \neq 0, \\ \ell' \in K_{i+1} \setminus K_i' }} \mu_{\ell,k}^{(i)} \, \mu_{j',\ell'}^{(i+1)}  \quad \big( \mr{if} \ j \in [m_i]\setminus [k] \, : \ M_i v_{j}^{(i)}\neq 0, \ j' \notin K_{i+1}\big), \\
0 &= \mu_{h,k'}^{(i+1)} + \sum_{\substack{\ell \in [m_i] \setminus  [k] \, : \\ M_iv_\ell^{(i)} \neq 0, \ \ell' < h, \\ \ell' \in K_{i+1} \setminus K_i', }} \mu_{\ell,k}^{(i)} \, \mu_{h,\ell'}^{(i+1)} \quad \binom{\mr{if} \ h \in [m_{i+1}] \setminus  [k'] \, : \ h \notin K_{i+1},}{\nexists \ell \in [m_i]\setminus[k] \ \mr{s.t.:}  \ M_i v_{\ell}^{(i)}= v_h^{(i+1)}}.
\end{align*}
It remains to show that these equations parametrise an affine subspace in the product of Grassmannians $\mr{Gr}_{e_i}(m_i)$. Starting at the end points of the segments, we define a total order of the parameters $\mu_{\ell,k}^{(i)}$ and relabel them as $\mu_1, \dots \mu_q$. Then all of the equations above describe one $\mu_k$ in terms of monomials in the $\mu_\ell$ with $\ell \in [q] \setminus [k]$. 
\end{proof}
\subsection{Proof of Theorem~\ref{trm:geom-prop-aff-flag-in-the-intro}}\label{sec:proof-of-geom-prop}
The representation $M_\omega$ as defined in Theorem~\ref{trm:finite_approx-in-the-intro} is nilpotent, and every nilpotent representation of $\Delta_n$ has a decomposition into the indecomposable representations $U(i;\ell)$ \cite[Theorem~1.11]{Kirillov2016}. The following observation is the key to examine geometric properties of the degenerate affine flag, because it allows us to apply the results about quiver Grassmannians from this section.
\begin{lma}
For $\omega \in \N$ there is an isomorphism of $\Delta_n$-representations
\[ M_\omega := \Big( \, \big( V_i:= \C^{2\omega n} \big)_{i \in \Z_n}, \big( M_{\alpha_i} := s_1 \circ \mr{pr}_{\omega n} \big)_{i \in \Z_n} \,  \Big) \cong \bigoplus_{j \in \Z_n} I_j^\omega \otimes \C^2.\]
\end{lma}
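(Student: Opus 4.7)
The plan is to read off the Krull--Schmidt decomposition of $M_\omega$ directly from its coefficient quiver. In the standard basis $e_1,\dots,e_{2\omega n}$ of $\C^{2\omega n}$ used in Theorem~\ref{trm:finite_approx-in-the-intro}, the map $M_{\alpha_i}=s_1\circ \mr{pr}_{\omega n}$ sends $e_{\omega n}\mapsto 0$, $e_{2\omega n}\mapsto 0$, and $e_j\mapsto e_{j+1}$ for every other $j$. The same formula appears at every arrow, so the coefficient quiver of $M_\omega$ decomposes into disjoint ``strings'' of basis vectors, each of length at most $\omega n$. In particular $M_\omega$ is nilpotent, so by \cite[Theorem~1.11]{Kirillov2016} together with \cite[Theorem~7.6]{Kirillov2016} it decomposes uniquely into indecomposables of the form $U_i(\ell)$, and the summands correspond bijectively to those basis vectors $e_j\in V_i$ that do not lie in the image of the incoming map $M_{\alpha_{i-1}}$.

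From the explicit formula, the image of $M_{\alpha_{i-1}}$ in $V_i$ is spanned by $\{e_j : j\in\{2,\dots,2\omega n\}\setminus\{\omega n+1\}\}$, so the only ``source'' basis vectors are $e_1$ and $e_{\omega n+1}$, and this holds at every vertex, producing $2n$ strings. Tracing the string starting at $e_1\in V_i$ gives
\[
  (e_1,i)\;\to\;(e_2,i+1)\;\to\;\cdots\;\to\;(e_{\omega n},i+\omega n-1)\;\to\;0,
\]
so this string has length $\omega n$ and ends at vertex $i+\omega n-1\equiv i-1\pmod n$. The string starting at $e_{\omega n+1}\in V_i$ behaves identically with indices shifted by $\omega n$, so it also has length $\omega n$ and terminates at vertex $i-1$. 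A dimension count confirms that the $2n$ strings exhaust the coefficient quiver ($2n\cdot\omega n = 2\omega n^2 = n\cdot\dim V_i$).

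Consequently $M_\omega\cong\bigoplus_{i\in\Z_n} U_i(\omega n)\otimes\C^2$, and Proposition~\ref{prop:proj_inj_dual} identifies $U_i(\omega n)\cong I_{i-1}^\omega$; after reindexing $j:=i-1$ this yields the asserted decomposition. The only subtlety is verifying that the $2n$ strings are pairwise disjoint and really realise the abstract indecomposables $U_i(\omega n)$. Both points are immediate because every basis vector $e_j\in V_i$ either has the unique preimage $e_{j-1}\in V_{i-1}$ under $M_{\alpha_{i-1}}$ or no preimage at all, so it belongs to a unique string, and the action of $M_{\alpha_i}$ on that string reproduces exactly the string action defining $U_i(\omega n)$.
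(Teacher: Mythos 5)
Your proof is correct and follows essentially the same route as the paper: both arguments read off the Krull--Schmidt decomposition from the coefficient quiver, identifying the $2n$ segments of length $\omega n$ (two starting and ending over each vertex) with the injectives $I_j^\omega\cong U(j;\omega n)$ via Proposition~\ref{prop:proj_inj_dual}. Your version merely makes explicit the bookkeeping (source vectors $e_1$, $e_{\omega n+1}$, the terminal vertex $i-1$, and the dimension count) that the paper's one-sentence proof leaves implicit.
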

\begin{proof}
The coefficient quiver of $M_\omega$ consists of $2n$ segments of length $\omega n$, and over each vertex there start and end exactly two segments. Each of these segments corresponds to a bounded injective representation $I_j^\omega \cong U(j;\omega n )$. 
\end{proof}
With this identification of quiver representations, Theorem~\ref{trm:geom-prop-aff-flag-in-the-intro} is a specialisation of the results about quiver Grassmannians for the cycle as developed above.
\begin{proof}[Proof of Theorem~\ref{trm:geom-prop-aff-flag-in-the-intro}]
Part (1) follows from Lemma~\ref{lma:dim_Gr} and the identification with a quiver Grassmannian as in Theorem~\ref{trm:finite_approx-in-the-intro}, i.e.
\[ \dim \mathcal{F}l^a_\omega\big(\widehat{\mathfrak{gl}}_n\big) = \omega n(2n - n) = \omega n^2.\] From the $\C^*$-action and the induced cellular decomposition as in Theorem~\ref{trm:cell_decomp-approx-lin-deg-aff-flag}, we obtain (3). With $x_i=y_i=1$, we obtain (2) and (5) as special case of Lemma~\ref{lma:irr_comp}, i.e. the irreducible components are equidimensional and parametrised by
\[ \Big\{ \mb{p} \in \Z_{\geq 0}^{n} : p_i \leq 2 \ \mr{for} \ \mr{all} \ i \in \Z_n, \sum_{i \in \Z_n} p_i = n \Big\}. \]
With $b_i := p_i-1$, the tuples $\mb{b}$ describe the steps in grand Motzkin paths, and we obtain a bijection between the above set parametrising the irreducible components and the set of grand Motzkin paths of length $n$. The geometric properties in part (4) are obtained as application of Lemma~\ref{lma:rat_sing}.  
\end{proof}
\section{Affine Dellac Configurations}\label{sec:aff-dellac}
For the Feigin degeneration of the classical flag variety of type $A_n$, the Poincar\'e polynomial can be computed using Dellac configurations, which are counted by the median Genocchi numbers. This description was develloped by E.~Feigin in \cite{Feigin2011}. The torus fixed points of the symplectic degenerated flag variety are identified with symplectic Dellac configurations by X.~Fang and G.~Fourier in \cite{FaFo2016}. In this section, we identify affine Dellac configurations with the cells of the degenerate affine flag variety, based on the parametrisation of its torus fixed points via successor closed subquivers. 
\begin{defi}
For $n \in \N$ an \textbf{affine Dellac configuration} $\hat{D}$ to the parameter $\omega \in \N$ consists of a rectangle of $2n\times n$ boxes, with $2n$ entries $k_j \in \{0,1,2,\dots,\omega\}$ such that:
\begin{enumerate}
\item $ \ $There is one number in each row.
\item $ \ $There are two numbers in each column.
\item $ \ $\(\sum_{j=1}^{2n}(p_j+nr_j) = \omega n^2,\)
\end{enumerate}
where $r_j:=\max \{k_j-1,0\}$. The left hand side and the right hand side of the rectangle are identified to obtain boxes on a cylinder. There is a staircase around the cylinder from left to right as separator. In the planar picture, we draw it from the lower left corner to the upper right corner of the rectangle of boxes. With respect to the staircase, $p_j$ is the number of steps from the separator to the entry going left. If the entry is zero, the position is zero as well. The set of \textbf{affine Dellac configurations} to the parameter $\omega$ is denoted by $\widehat{DC}_n(\omega)$.
\end{defi}
\begin{ex}
For $n=4$ and $\omega = 3$, the subsequent configuration
\begin{center}
\begin{tikzpicture}[scale=.5]
\node at (-0.5+3,0.5-1) {$2$};
\node at (-0.5+2,0.5-2) {$2$};
\node at (-0.5+4,0.5-3) {$3$};
\node at (-0.5+1,0.5-4) {$1$};
\node at (-0.5+2,0.5-5) {$2$};
\node at (-0.5+3,0.5-6) {$0$};
\node at (-0.5+4,0.5-7) {$3$};
\node at (-0.5+1,0.5-8) {$2$};

\node at (-0.5+4.2,0.5-9.5) {$\Sigma=$};

\node at (-0.5+6,0.7) {$p_j:$};
\node at (-0.5+6,0.5-1) {$1$};
\node at (-0.5+6,0.5-2) {$1$};
\node at (-0.5+6,0.5-3) {$2$};
\node at (-0.5+6,0.5-4) {$4$};
\node at (-0.5+6,0.5-5) {$2$};
\node at (-0.5+6,0.5-6) {$0$};
\node at (-0.5+6,0.5-7) {$2$};
\node at (-0.5+6,0.5-8) {$4$};
\node at (-0.5+6,0.5-9.5) {$16$};

\node at (-0.5+8,0.7) {$r_j:$};
\node at (-0.5+8,0.5-1) {$1$};
\node at (-0.5+8,0.5-2) {$1$};
\node at (-0.5+8,0.5-3) {$2$};
\node at (-0.5+8,0.5-4) {$0$};
\node at (-0.5+8,0.5-5) {$1$};
\node at (-0.5+8,0.5-6) {$0$};
\node at (-0.5+8,0.5-7) {$2$};
\node at (-0.5+8,0.5-8) {$1$};
\node at (-0.5+8,0.5-9.5) {$8$};

\foreach \n in{4}{
\foreach \i in {0,1,2,3,4}{
  \draw(\i,0) -- (\i,-\n-\n);
  \draw(0,-\i) -- (\n,-\i);
  \draw(0,-\i-\n) -- (\n,-\i-\n);  
}
}
\foreach \n in{4}{
\foreach \i in {0,1,2,3 }{
  \draw [line width=0.5mm, black] (\i,\i-\n+1) -- (\i+1,\i-\n+1);
  \draw [line width=0.5mm, black] (\i,\i-\n-\n+1) -- (\i+1,\i-\n-\n+1);
  \draw [line width=0.5mm, black] (\i,\i-\n) -- (\i,\i-\n+1);
  \draw [line width=0.5mm, black] (\i,\i-\n-\n) -- (\i,\i-\n-\n+1);
}
}
\end{tikzpicture}
\end{center}
is contained in the set $\widehat{DC}_4(3)$, since $16+4\cdot 8 = 48 = 3\cdot 4^2.$
\end{ex}

\begin{trm}
\label{trm:bij-cells-aff-delllac}
For $\omega \in \N$, the cells in the approximation $\mathcal{F}l^{a}_{\omega}\big(\widehat{\mathfrak{gl}}_n\big)$ of the degenerate affine flag variety are in bijection with affine Dellac configurations to the parameter $\omega$. 
\end{trm}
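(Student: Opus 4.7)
The plan is to combine Theorem~\ref{trm:cell_decomp-approx-lin-deg-aff-flag} (cells correspond to torus fixed points) with the parametrisation of $T$-fixed points by successor closed subquivers from \cite[Proposition~1]{Cerulli2011}, reducing the claim to an explicit combinatorial bijection between successor closed subquivers of $Q(M_\omega)$ with dimension vector $\mb{e}_\omega$ and $\widehat{DC}_n(\omega)$. From Theorem~\ref{trm:finite_approx-in-the-intro} and the isomorphism $M_\omega \cong \bigoplus_{j\in\Z_n} I_j^\omega \otimes \C^2$ established in Section~\ref{sec:proof-of-geom-prop}, the coefficient quiver $Q(M_\omega)$ is a disjoint union of $2n$ segments of length $\omega n$, with exactly two segments ending at each vertex of $\Delta_n$; hence a successor closed subquiver is encoded by a tuple of tail lengths $(m_r)_{r=1}^{2n}$ with $m_r \in \{0,\ldots,\omega n\}$.

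Next I would construct the explicit bijection. Label segments so that segment $r$ ends at $e_r := n - r \bmod n$, and write $m_r = p_r + n\, r_r$ with $r_r = \max(k_r - 1, 0)$, $k_r \in \{0,\ldots,\omega\}$ and $p_r \in \{0,\ldots,n\}$, requiring $(k_r, p_r) = (0,0)$ iff $m_r = 0$. I place the entry $k_r$ in row $r$ at column $c_r := n - r - p_r + 1 \bmod n$, with the convention that the result lies in $\{1,\ldots,n\}$. A direct geometric check against the tikz description confirms that this column is precisely ``$p_r$ steps to the left of the staircase'' in both halves of the rectangle. Condition (1) of the definition of $\widehat{DC}_n(\omega)$ is immediate, and condition (3) reduces to $\sum_r m_r = \omega n^2$, which holds because $\sum_v (\mb{e}_\omega)_v = \omega n^2$.

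The heart of the argument is condition (2), which I claim is equivalent to the dimension vector condition on the subquiver $S$. Writing $\mathrm{arc}_r := \{e_r - p_r + 1, \ldots, e_r\} \bmod n$ (empty if $p_r = 0$) and $N_v := \#\{r : v \in \mathrm{arc}_r\}$, the multiplicity of vertex $v$ in $S$ equals $\sum_r r_r + N_v$; since the first sum is independent of $v$, the dimension constraint $\bdim S = \mb{e}_\omega$ is equivalent to $N_v$ being independent of $v$. Expanding $N_v - N_{v-1}$ as the count of arcs starting at $v$ minus arcs ending at $v-1$, and using the key observation that both $p_r = 0$ and $p_r = n$ force $c_r \equiv e_r + 1 \pmod n$, the contributions of empty-tail and full-winding-tail rows cancel on both sides of the identity, yielding
\[ N_v - N_{v-1} = \#\{r : c_r = v\} - 2, \]
where the $-2$ reflects the two segments ending at each vertex of $\Delta_n$. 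Consequently $N_v$ is constant exactly when every column contains two entries. The map $S \mapsto \hat{D}$ is clearly invertible, since each row of a configuration uniquely recovers $(k_r, p_r)$ and hence $m_r$.

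The main obstacle I anticipate is the final cancellation in the computation of $N_v - N_{v-1}$: one must carefully track the placements of empty-tail rows ($p_r = 0$) and full-winding-tail rows ($p_r = n$), both of which sit at column $e_r + 1 \bmod n$, and verify that they contribute equally to the two counts so that only the column-count discrepancy remains. Once this cancellation is established, the reduction of condition (2) to the dimension vector condition is immediate, and the desired bijection follows.
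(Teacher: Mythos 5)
Your proposal follows essentially the same route as the paper: reduce via Theorem~\ref{trm:cell_decomp-approx-lin-deg-aff-flag} and \cite[Proposition~1]{Cerulli2011} to successor closed subquivers of the coefficient quiver of $\bigoplus_j I_j^\omega\otimes\C^2$, encode these by the $2n$ tail lengths, and transfer them to configurations via $\ell = p + n\,\max(k-1,0)$, with condition (3) matching the total dimension and condition (2) matching the constancy of the dimension vector through the fact that exactly two segments start and end over each vertex. Your explicit computation of $N_v - N_{v-1} = \#\{r : c_r = v\} - 2$, with the cancellation of the $p_r=0$ and $p_r=n$ rows, is a more careful spelled-out version of the paper's argument that two entries per column corresponds to two subsegments starting over each vertex, and it is correct.
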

\subsection{Proof of Theorem~\ref{trm:bij-cells-aff-delllac} } 
For $k \in \N$, let $[k]_0 $ denote the set $\{0,1,\dots,k\}$. The combinatorics of successor closed subquivers yields the following parametrisation of the cells.
\begin{lma}
\label{lma:cyclic-dellac}
The cells of $\mathcal{F}l^{a}_{\omega}\big(\widehat{\mathfrak{gl}}_n\big)$ are parametrised by the elements of the set
\[ \mathcal{C}^{a}_n(\omega ) := \Big\{ \mb{l} := (\ell_{i,1},\ell_{i,2})_{i \in \Z_n} \in \bigoplus_{i \in \Z_n} [\omega n]_0 \times [\omega n]_0 : \ \bdim U(\mb{l}) = \mb{e}_{\omega} \Big\}\]
where $\mb{e}_{\omega} := (e_i=\omega n)_{i \in \Z_n}$ and
\[ U(\mb{l}) := \bigoplus_{i \in \Z_n} U(i;\ell_{i,1}) \oplus U(i;\ell_{i,2}). \]
\end{lma}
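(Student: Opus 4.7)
The plan is to chain together three earlier results: the identification of the approximation with a quiver Grassmannian (Theorem~\ref{trm:finite_approx-in-the-intro}), the cellular decomposition whose cells are the attracting sets of $T$-fixed points (Theorem~\ref{trm:cell_decomp-approx-lin-deg-aff-flag}), and the description of those fixed points as successor closed subquivers of the coefficient quiver from \cite[Proposition~1]{Cerulli2011}. Under these identifications, the cells of $\mathcal{F}l^{a}_{\omega}\big(\widehat{\mathfrak{gl}}_n\big)$ correspond bijectively to successor closed subquivers of $Q(M_\omega)$ whose induced subrepresentation has dimension vector $\mb{e}_\omega$, so the task reduces to enumerating such subquivers combinatorially.

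I would then exploit the decomposition $M_\omega \cong \bigoplus_{j \in \Z_n} I_j^\omega \otimes \C^2$ established in the lemma preceding Section~\ref{sec:proof-of-geom-prop}. This shows that $Q(M_\omega)$ is a disjoint union of $2n$ linear segments, two for each $j \in \Z_n$, each of shape $U(j;\omega n)$. A successor closed subquiver of a single segment $v_1 \to v_2 \to \cdots \to v_{\omega n}$ is forced to be a terminal run $\{v_{k+1}, \ldots, v_{\omega n}\}$: including $v_k$ forces $v_{k+1}$ by successor-closedness, and iterating covers all $v_{k+1}, \ldots, v_{\omega n}$. Such a run is parametrised by its length $\ell := \omega n - k \in [\omega n]_0$ (with $\ell = 0$ for the empty choice), and the subrepresentation it generates is a copy of $U(j;\ell)$. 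Because the $2n$ segments of $Q(M_\omega)$ are pairwise disjoint, a successor closed subquiver of the whole coefficient quiver is a choice of suffix length $\ell_{j,s} \in [\omega n]_0$ for each segment, with $s \in \{1,2\}$ indexing the two copies attached to $j$, and the resulting subrepresentation is precisely the $U(\mb{l})$ appearing in the statement.

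Finally, the requirement that this subrepresentation lie in $\mr{Gr}^{\Delta_n}_{\mb{e}_\omega}(M_\omega)$ translates exactly into the constraint $\bdim U(\mb{l}) = \mb{e}_\omega$, giving the set $\mathcal{C}^{a}_n(\omega)$. The argument is essentially a translation between combinatorial and representation-theoretic data; the only delicate point is to verify the shape of $Q(M_\omega)$ carefully and to confirm that successor closed subquivers of a single linear segment really are its suffixes. Once that is spelled out, everything else is a direct unwinding of definitions, and there is no serious obstacle beyond bookkeeping.
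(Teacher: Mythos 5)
Your proposal is correct and follows essentially the same route as the paper: identify the approximation with $\mr{Gr}^{\Delta_n}_{\mb{e}_\omega}(M_\omega^a)$, invoke the cellular decomposition and \cite[Proposition~1]{Cerulli2011} to reduce to successor closed subquivers of the coefficient quiver, and then observe that such a subquiver is a choice of suffix of each of the $2n$ segments, recorded by the length tuple $\mb{l}$ subject to the dimension-vector constraint. The only difference is that you spell out explicitly why a successor closed subquiver of a linear segment must be a terminal run, which the paper leaves implicit.
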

\begin{proof}
The approximation $\mathcal{F}l^{a}_{\omega}\big(\widehat{\mathfrak{gl}}_n\big)$ is isomorphic to the quiver Grassmannian
\[ \mr{Gr}^{\Delta_n}_{\mb{e}_{\omega}}\big(M_{\omega}^a\big), \]
where $\mb{e}_{\omega} :=  (e_i=\omega n)_{i \in \Z_n}$ and 
\(M_{\omega}^a = \bigoplus_{i \in \Z_n} U(i;\omega n) \otimes \C^2.\)

By \cite[Proposition~1]{Cerulli2011} and Theorem~\ref{trm:cell_decomp-approx-lin-deg-aff-flag} we obtain that the cells in the approximation of the degenerate flag variety are in bijection with successor closed subquivers in the coefficient quiver of $M_{\omega}^a$, which have $\omega n$ marked points over each vertex $i \in \Z_n$. Each of the subquivers consists of $2n$ segments with length between zero and $\omega n$. There are exactly two segments ending over each vertex and they correspond to the indecomposable representations $U(i;\ell_{i,1})$ and $U(i;\ell_{i,2})$, where $U(i;0)$ is the zero representation independent of the indexing vertex. The marked points of the subquiver are encoded in the dimension vector of the representation $U(\mb{l})$. Hence the set $\mathcal{C}^{a}\big(n,\omega \big)$ is in bijection with the set of successor closed subquivers, which parametrise the cells of $\mathcal{F}l^{a}_{\omega}\big(\widehat{\mathfrak{gl}}_n\big)$. 
\end{proof}
Now we want to study how every row of an affine Dellac configuration encodes an indecomposable representation $U(i;\ell_{i,k})$ for $k \in \{1,2\}$. Given an entry $k_j$ of a configuration, and its relative position $p_j$ to the separator, we set $\ell_{i,k} = p_j+ n r_j$, where $i=j$, $k=1$ for $j \leq n$, and $i=j-n$, $k=2$ for $j > n$. Vice versa, we compute $k_j := \lceil \ell_{i,k}/n \rceil$ and $p_j := \ell_{i,k} - n r_j$, where $j=i$ for $k=1$ and $j=i+n$ for $k=2$. These maps are inverse to each other. It remains to show that the image of a cell is an affine Dellac configuration to the parameter $\omega$, and that each of these configurations is mapped to a cell.

Given a length tuple parametrising the fixed point $U(\mb{l})$, which describes a cell in the quiver Grassmannian, we compute the numbers $k_j$ for $j \in [2n]$ as described above. By construction of the map, we obtain exactly $2n$ parameters $k_j \in [\omega]_0$ and have a unique way to write them in the $2n$ rows of a configuration. These parameters satisfy 
\[ \sum_{j=1}^{2n}(p_j+nr_j) = \sum_{i=1}^{n} \ell_{i,1} + \ell_{i,2} = \omega n^2, \]
since the entries in the dimension vector of $U(\mb{l})$ are equal to $\omega n$. It follows from the parametrisation of cells by successor closed subquivers, that one cell is obtained from each other cell by moving parts of subsegments in the coefficient quiver. These movements preserve the number of segments starting and ending over each vertex of the underlying quiver, where the empty segment over the vertex $i$ is considered as segment starting over the vertex $i+1$. By Lemma~\ref{lma:irr_comp} the top-dimensional cells have two segments starting and ending over each vertex of the quiver $\Delta_n$. Hence this is true for all other cells such that in the configuration, which is assigned to the length tuple $\mb{l}$, there are exactly two entries in each column. Accordingly, the image of a cell is an affine Dellac configuration to the parameter $\omega$.

Starting with an affine Dellac configuration $D(\mb{k})$ to the parameter $\omega$, we compute the numbers $\ell_{i,k}$ as described above. We obtain $2n$ numbers parametrising the length of the subsegments in the coefficient quiver of $M_{\omega}^a$, and by construction all of these numbers live in the set $[\omega n]_0$. It remains to show that the subquiver, which is parametrised by this length tuple, has dimension vector $\mb{e}_{\omega} =  (e_i=\omega n)_{i \in \Z_n}$. 

From an affine Dellac configuration, we can compute the dimension vector of the corresponding successor closed subquiver as follows. In the $j$-th row we fill the boxes on the right of the box with the entry $k_j$ and on the left of the separator with the number $k_j$, and in all remaining boxes of this row we write $r_j$. The row vector as obtained by this procedure equals the dimension vector of the quiver representation $U(i,\ell_{i,k})$, where $i$ and $k$ are obtained as above. Summing all row vectors computed from the configuration, we obtain the dimension vector of the associated quiver representation.

Property $(1)$ and $(2)$ of affine Dellac configurations imply that over each vertex of $\Delta_n$ there are starting and ending exactly two segments of the associated subquiver. Hence the entries of the dimension vector of the subquiver are all the same and equal to $\omega n$, since their sum is equal to $\omega n^2$ by Property $(3)$ of affine Dellac configurations. Thus the image of an affine Dellac configuration is a torus fixed point, which parametrises a cell in the quiver Grassmannian.
\section{Linear Degenerations of the Affine Flag Variety}\label{sec:lin-deg-flag}
In this section, we study linear degenerations of the affine flag variety, following the approach of G.~Cerulli Irelli, X.~Fang, E.~Feigin, G.~Fourier and M.~Reineke as introduced in \cite{CFFFR2017}. 
In contrast to toric degenerations, where methods from toric geometry are used to study the degenerations and deduce properties of the original variety, linear degenerations are tailored to be studied as quiver Grassmannians. For more details we refer to the overview \cite{FaFoLi2016}.
\subsection{Rotation Invariant Parametrisation of the Affine Flag Variety}
In Proposition~\ref{prop:alt-param-classical} we obtained an alternative parametrisation of the affine flag variety, which is based on Sato Grassmannians. Using the fact that $s_k \mathrm{SGr}_0 \cong \mathrm{SGr}_k$, we get a rotation invariant parametrisation of the affine flag variety. 
\begin{prop}
\label{prop:alternative_parametrisation}
The affine flag variety $\mathcal{F}l\big(\widehat{\mathfrak{gl}}_n\big)$, as subset in the product of $n$ copies of the Sato Grassmannian $\mathrm{SGr}_0$, is parametrised as
\[\mathcal{F}l\big(\widehat{\mathfrak{gl}}_n\big) \cong \Big\{ \big(U_i \big)_{i\in \Z_n}  \in \mathrm{SGr}_0^{\times n}  \ : \  s_{-1} U_i \subseteq  U_{i+1} \fa i \in \Z_n \Big\}. \]
\end{prop}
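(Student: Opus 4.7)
The plan is to deduce this parametrisation from the one in Proposition~\ref{prop:alt-param-classical} by applying an index shift that rotates each Sato Grassmannian $\mathrm{SGr}_k$ back to $\mathrm{SGr}_0$. Concretely, I would define the map
\[ \Phi\big((U_k)_{k=0}^{n-1}\big) := \big(\tilde{U}_i := s_{-i} U_i\big)_{i \in \Z_n}. \]
Using the identification $s_k\, \mathrm{SGr}_0 \cong \mathrm{SGr}_k$ already recorded in the excerpt (citing \cite[Section~1.2]{FFR2017}), each component $\tilde{U}_i$ lies in $\mathrm{SGr}_0$, and the inverse assignment $(\tilde{U}_i)_{i \in \Z_n} \mapsto (s_i \tilde{U}_i)_{i=0}^{n-1}$ shows that $\Phi$ is a set-theoretic bijection between the two products of Sato Grassmannians.

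The second step is to check that $\Phi$ transforms the containment relations of Proposition~\ref{prop:alt-param-classical} into the single cyclic relation $s_{-1}\tilde{U}_i \subseteq \tilde{U}_{i+1}$. For $i \in \{0,\dots,n-2\}$, applying the linear bijection $s_{-(i+1)}$ on $V$ to both sides of $U_i \subset U_{i+1}$ turns the inclusion into $s_{-1}\tilde{U}_i \subseteq \tilde{U}_{i+1}$. For the wrap-around inclusion $U_{n-1} \subset s_n U_0$, applying $s_{-n}$ yields $s_{-n}U_{n-1} \subseteq U_0$, which rewrites as $s_{-1}\tilde{U}_{n-1} \subseteq \tilde{U}_0$; this is precisely the case $i=n-1$ of the cyclic condition, with index $i+1$ read in $\Z_n$. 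Conversely, applying the inverse shifts to any tuple satisfying the cyclic condition recovers a tuple fulfilling the chain of inclusions of Proposition~\ref{prop:alt-param-classical}.

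The only subtlety worth keeping an eye on is that $s_{-1}\tilde{U}_i$ itself need not be an element of $\mathrm{SGr}_0$, but the displayed constraint is merely an inclusion of subspaces of the ambient space $V$, not a statement about membership in a particular Sato Grassmannian, so this causes no obstruction. Putting the two steps together, $\Phi$ bijects the set in Proposition~\ref{prop:alt-param-classical} with the one in the statement, and the desired parametrisation of $\mathcal{F}l\big(\widehat{\mathfrak{gl}}_n\big)$ follows immediately.
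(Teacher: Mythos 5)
Your argument is correct and is essentially the one the paper intends: the text derives Proposition~\ref{prop:alternative_parametrisation} from Proposition~\ref{prop:alt-param-classical} precisely by invoking $s_k\,\mathrm{SGr}_0 \cong \mathrm{SGr}_k$ and rotating each factor back to $\mathrm{SGr}_0$, which is exactly your map $\Phi$. Your explicit check that the shifts $s_{-(i+1)}$ convert the chain of inclusions (including the wrap-around $U_{n-1} \subset s_n U_0$) into the single cyclic condition, and your remark that the condition is only an inclusion of subspaces of $V$, fill in the details the paper leaves implicit.
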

If we replace the shifts $s_{-1}$ by arbitrary linear maps, we obtain linear degenerations of the affine flag variety.
\begin{defi}\label{def:lin-deg-aff-flag}
For a tuple $f=(f_i)$ of linear maps $f_i : V \to V$ with $i \in \Z_n$, the \textbf{f-linear degenerate affine flag variety} is defined as
\[\mathcal{F}l^f\big(\widehat{\mathfrak{gl}}_n\big) := \Big\{ \big(U_i \big)_{i\in \Z_n}  \in \mathrm{SGr}_0^{\times n}  \ : \  f_i U_i \subseteq  U_{i+1} \fa i \in \Z_n \Big\}. \]
\end{defi}
The degeneration as in Definition~\ref{def:deg-aff-flag} corresponds to $f_i = s_{-1} \circ \mr{pr}_{1} $. In the rest of this section we study the geometry of the linear degenerations via quiver Grassmannians for the equioriented cycle and parametrise the isomorphism classes of linear degenerations. The rotation invariance of the affine flag as in Proposition~\ref{prop:alternative_parametrisation} simplifies the study of isomorphism classes for the linear degenerations.
\subsection{Finite Approximations of the Linear Degenerations}\label{sec:fin-approx-lin-deg}
For a positive integer $\omega$, the finite approximation of the $f$-linear degenerate affine flag variety is defined as
\[ \mathcal{F}l^f_\omega\big(\widehat{\mathfrak{gl}}_n\big)  := \Big\{  (U_i)_{i \in \Z_n} \in \mathcal{F}l^f\big(\widehat{\mathfrak{gl}}_n\big) \ : \ V_{-\omega n} \subseteq U_i \subseteq V_{\omega n} \fa i \in \Z_n \Big\} .\] 
In order to view this finite approximation as quiver Grassmannian, define the finite dimensional vector space 
\[ V^{(\ell)} := \mr{span}\big( v_\ell, v_{\ell-1}, \dots , v_{-\ell+2}, v_{-\ell+1} \big) \quad \mr{for} \ \ell \in \N. \]
\begin{trm}
\label{trm:finite_approx-lin-deg}
Let $\omega \in \N$ be given, define the quiver representation
\[ M_\omega := \Big( \, \big( V_i:= V^{(\omega n)} \big)_{i \in \Z_n}, \big( M_{\alpha_i} := f_i\vert_{V^{(\omega n)}} \big)_{i \in \Z_n} \,  \Big),\] and the dimension vector $\mb{e}_\omega := (e_i := \omega n)_{i \in \Z_n}$. Then the finite dimensional approximation of the $f$-linear degenerate affine flag variety is isomorphic to the quiver Grassmannian corresponding to $M_\omega$ and $\mb{e}_\omega$, i.e.
\[ \mathcal{F}l^f_\omega\big(\widehat{\mathfrak{gl}}_n\big) \cong \mr{Gr}^{\Delta_n}_{\mb{e}_\omega}\big(M_\omega\big). \]
\end{trm}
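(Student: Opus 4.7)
The overall plan is to follow the proof of Theorem~\ref{trm:finite_approx-in-the-intro}, but to start directly from the rotation invariant description of the affine flag variety in Proposition~\ref{prop:alternative_parametrisation}. All subspaces $U_i$ already sit in a single Sato Grassmannian $\mr{SGr}_0$, so no index-shift has to be absorbed into the arrows of the cycle, and the maps $f_i$ are exactly the data that should become the linear maps $M_{\alpha_i}$ of the $\Delta_n$-representation after finite-dimensionalisation.

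First I would identify the vector space attached to each vertex. The approximation condition $V_{-\omega n}\subseteq U_i\subseteq V_{\omega n}$, together with $U_i\in\mr{SGr}_0$, forces $\dim(U_i/V_{-\omega n})=\omega n$, so the quotient map $U_i\mapsto U_i/V_{-\omega n}$ gives a bijection between admissible $U_i$ and $\omega n$-dimensional subspaces of the $2\omega n$-dimensional space $V_{\omega n}/V_{-\omega n}$. The assignment $[v_j]\mapsto v_j$ for $-\omega n<j\leq\omega n$ identifies this quotient with $V^{(\omega n)}$.

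Next I would translate the compatibility condition $f_i U_i\subseteq U_{i+1}$. Since $U_{i+1}\subseteq V_{\omega n}$, the image $f_i(U_i)$ lies in $V_{\omega n}$; passing to the quotient modulo $V_{-\omega n}$ induces an endomorphism of $V^{(\omega n)}$ which under the identification above agrees with $f_i|_{V^{(\omega n)}}=M_{\alpha_i}$. The containment then becomes $M_{\alpha_i}\bar U_i\subseteq\bar U_{i+1}$, which is precisely the defining relation of the quiver Grassmannian $\mr{Gr}^{\Delta_n}_{\mb{e}_\omega}(M_\omega)$. Assembling these bijections for all $i\in\Z_n$ yields the required isomorphism.

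The main subtlety is to verify that $M_{\alpha_i}:=f_i|_{V^{(\omega n)}}$ is genuinely a well-defined endomorphism of $V^{(\omega n)}$: this needs $f_i(V_{-\omega n})\subseteq V_{-\omega n}$, so that the induced map on the quotient exists, together with $f_i(V^{(\omega n)})\subseteq V_{\omega n}$, so that projecting back to $V^{(\omega n)}$ makes sense. Both are satisfied by $f_i=s_{-1}\circ\mr{pr}_1$ of Definition~\ref{def:deg-aff-flag} (and are implicit in the chain of isomorphisms used in Theorem~\ref{trm:finite_approx-in-the-intro}); I would impose the analogous compatibility as a standing assumption on the tuples $f$ under consideration, which is the natural one consistent with the setup of linear degenerations in \cite{CFFFR2017}.
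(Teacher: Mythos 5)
Your proof takes essentially the same route as the paper's: the paper's argument is exactly the chain of identifications $\mathcal{F}l^f_\omega\big(\widehat{\mathfrak{gl}}_n\big) = \big\{(U_i)\in \mathrm{SGr}_{0,\omega}^{\times n} : f_iU_i\subseteq U_{i+1}\big\} \cong \big\{(U_i)\in\prod_{i}\mathrm{Gr}_{\omega n}\big(V^{(\omega n)}\big) : f_i\vert_{V^{(\omega n)}}U_i\subseteq U_{i+1}\big\} \cong \mr{Gr}^{\Delta_n}_{\mb{e}_\omega}(M_\omega)$, carried out ``analogously to Theorem~\ref{trm:finite_approx-in-the-intro}'' just as you describe, with the approximated Sato Grassmannian identified with $\mr{Gr}_{\omega n}\big(V^{(\omega n)}\big)$ via the quotient by $V_{-\omega n}$. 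Your closing observation that $f_i\vert_{V^{(\omega n)}}$ is only a well-defined endomorphism of $V^{(\omega n)}$ under a compatibility assumption on $f$ is a genuine point the paper leaves implicit, and stating it as a standing hypothesis is the correct reading.
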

\begin{proof}
Analogous to the proof of Theorem~\ref{trm:finite_approx-in-the-intro} we obtain
\begin{align*}
 \mathcal{F}l^f_\omega\big(\widehat{\mathfrak{gl}}_n\big)  &= \Bigg\{ \big(U_i \big)_{i\in \Z_n}  \in \mathrm{SGr}_{0,\omega}^{\times n}  \ : \  f_i U_i \subseteq  U_{i+1} \fa i \in \Z_n \Bigg\}\\
 &\cong \Bigg\{ \big(U_i \big)_{i \in \Z_n}  \in \prod_{i \in \Z_n} \mathrm{Gr}_{\omega n}\big(V^{(\omega n)}\big) \  : \  f_i\vert_{V^{(\omega n)}} U_i \subseteq  U_{i+1} \fa i \in \Z_n \Bigg\}\\
 &\cong \mr{Gr}^{\Delta_n}_{\mb{e}_\omega}\big(M_\omega\big).
\end{align*} 
\end{proof}
\begin{rem}
This theorem implies that all approximations of linear degenerations of the affine flag variety can be studied using quiver Grassmannians for the equioriented cycle. However it is only possible to apply the methods introduced in the present paper if the quiver representation $M_\omega$ as in Theorem~\ref{trm:finite_approx-lin-deg} is nilpotent.
\end{rem}
\subsection{Nilpotent Linear Degenerations}
In this subsection we characterise the degenerations which can be studied with the methods introduced in this article. An endomorphism $f \in \mr{End}^{\times n}(V)$ is called \f{nilpotent} if for all $\ell \in \N$ there exists a $k \in \N$ such that the concatenation of the restrictions of $f_i$ to $V^{(\ell)}$, along the arrows of the cycle, vanish for each stating vertex $i \in \Z_n$ and paths of length $k$, i.e.:
\[ \big(f_{i+k}\circ \dots \circ f_{i+1}\circ f_{i}\big)\big\vert_{V^{(\ell)}} = 0.\]
The set of nilpotent endomorphism is denoted by $\mr{End}^{\times n}_{nil}(V)$.
\begin{rem}\label{rem:nilpot-quiver-rep}
If $f \in \mr{End}^{\times n}(V)$ is nilpotent, the quiver representation $M_\omega$ as in Theorem~\ref{trm:finite_approx-lin-deg} is nilpotent for every finite approximation and we can apply the results from Section~\ref{sec:quiver-grass-cycle} to the corresponding quiver Grassmannians. 
\end{rem}
In particular, we obtain a cellular decomposition of each approximation.
\begin{trm}\label{trm:cell-decomp-lin-deg}
Let $f \in \mr{End}^{\times n}_{nil}(V)$ and $\omega \in \N$, then $\mathcal{F}l^f_\omega\big(\widehat{\mathfrak{gl}}_n\big)$ admits a cellular decomposition into the attracting sets of its $\C^*$-fixed points.
\end{trm}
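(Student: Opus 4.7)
The plan is to reduce this statement directly to Theorem~\ref{trm:cell_decomp-approx-lin-deg-aff-flag} via the identification with a quiver Grassmannian. First I would apply Theorem~\ref{trm:finite_approx-lin-deg} to obtain the isomorphism
\[ \mathcal{F}l^f_\omega\big(\widehat{\mathfrak{gl}}_n\big) \cong \mr{Gr}^{\Delta_n}_{\mb{e}_\omega}\big(M_\omega\big), \]
where $M_\omega$ is the $\Delta_n$-representation with vector spaces $V^{(\omega n)}$ over every vertex and structure maps $M_{\alpha_i} = f_i|_{V^{(\omega n)}}$. The hypothesis $f \in \mr{End}^{\times n}_{nil}(V)$ guarantees, as noted in Remark~\ref{rem:nilpot-quiver-rep}, that $M_\omega$ is a nilpotent representation of the cycle.

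Next I would invoke \cite[Theorem~1.11]{Kirillov2016} to write
\[ M_\omega \cong U(\mb{d}) = \bigoplus_{i \in \Z_n} \bigoplus_{\ell=1}^{N} U(i;\ell) \otimes \C^{d_{i,\ell}} \]
for a suitable nilpotence parameter $N$ and multiplicities $d_{i,\ell} \in \Z_{\geq 0}$. Since $\mb{e}_\omega = (\omega n)_{i \in \Z_n} \leq \bdim M_\omega = (2\omega n)_{i \in \Z_n}$, the quiver Grassmannian $\mr{Gr}^{\Delta_n}_{\mb{e}_\omega}(M_\omega)$ fits exactly into the framework of Theorem~\ref{trm:cell_decomp-approx-lin-deg-aff-flag}. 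Applying that theorem produces a torus action of $T = \C^*$ on $\mr{Gr}^{\Delta_n}_{\mb{e}_\omega}(M_\omega)$ whose attracting sets $\mathcal{C}(L)$ of the fixed points $L$ are affine spaces, and which assemble into a cellular decomposition
\[ \mr{Gr}^{\Delta_n}_{\mb{e}_\omega}(M_\omega) = \bigsqcup_{L \in \mr{Gr}^{\Delta_n}_{\mb{e}_\omega}(M_\omega)^T} \mathcal{C}(L). \]
Transporting this decomposition through the isomorphism from Theorem~\ref{trm:finite_approx-lin-deg} then yields the claim.

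The only subtlety I anticipate, and the main thing to double-check, is that the construction of the $\C^*$-action in Section~\ref{sec:cell-decomp} only uses the abstract isomorphism class $M_\omega \cong U(\mb{d})$: one chooses a basis of the ambient spaces adapted to the segments of the coefficient quiver of $U(\mb{d})$ and grades it with the weights prescribed there. Since the isomorphism $M_\omega \cong U(\mb{d})$ produced above is an isomorphism of $\Delta_n$-representations, such a basis for $M_\omega$ exists and the resulting $\C^*$-action and its attracting sets are intrinsic to the isomorphism class. Hence no new input beyond the nilpotence of $f$ and the already-proved Theorem~\ref{trm:cell_decomp-approx-lin-deg-aff-flag} is required, and the proof is essentially a citation followed by two identifications.
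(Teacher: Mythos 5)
Your proposal is correct and follows exactly the paper's own argument: nilpotence of $f$ gives nilpotence of $M_\omega$ (Remark~\ref{rem:nilpot-quiver-rep}), the identification of Theorem~\ref{trm:finite_approx-lin-deg} turns the approximation into a quiver Grassmannian of a nilpotent representation, and Theorem~\ref{trm:cell_decomp-approx-lin-deg-aff-flag} then supplies the cellular decomposition. Your extra remark that the $\C^*$-action only depends on the isomorphism class $M_\omega \cong U(\mb{d})$ is a correct and worthwhile clarification, but it does not change the route.
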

\begin{proof}
By Remark~\ref{rem:nilpot-quiver-rep} the quiver representation $M_\omega$ is nilpotent if $f \in \mr{End}^{\times n}_{nil}(V)$. Hence we can apply Theorem~\ref{trm:cell_decomp-approx-lin-deg-aff-flag} to the corresponding approximation of the $f$-linear degenerate flag variety, by Theorem~\ref{trm:finite_approx-lin-deg}. 
\end{proof}
\begin{rem}
Analogous to \cite[Remark~6]{CFFFR2017}, Theorem~\ref{trm:finite_approx-lin-deg} allows to compute the Euler characteristic and the Poincar\'e polynomial of each finite approximation, corresponding to an $f \in \mr{End}^{\times n}_{nil}(V)$.
\end{rem}
\begin{rem}\label{rem:nilpotent-locus}
For the linear degenerations, which do not correspond to nilpotent endomorphisms, it is not possible to use the methods in the present paper to deduce results about their geometry. Hence it would be of interest to understand quiver Grassmannians for the cycle, with not necessarily nilpotent representations. 
\end{rem}
\subsection{Isomorphism Classes of Linear Degenerations}
On tuples of linear maps $f \in \mr{End}^{\times n}(V)$, we have an action of $g = (g_i)_{i \in \Z_n}$ in $G := \prod_{i \in \Z_n} \mr{GL}(V)$ via base change, i.e.
\[ g.f := \Big(  g_{i+1}f_{i}g_{i}^{-1} \Big)_{i \in \Z_n}.\]
Two linear degenerations are isomorphic if and only if their defining tuples of maps live in the same $G$-orbit. This follows directly from the parametrisation of the degenerations via Sato Grassmannians as in Definition~\ref{def:lin-deg-aff-flag}. Hence it is sufficient to study one representative for each $G$-orbit.  

For the restrictions of the map tuples $f=(f_i)_{i \in \Z_n}$ to $V^{(\ell)}$, these $G$-orbits where studied by G.~Kempken in her thesis \cite{Kempken1982}. In each finite approximation $\mr{End}^{\times n}(V^{(\ell)})$, the number of $G$-orbits is finite, whereas there are infinitely many different $G$-orbits in $\mr{End}^{\times n}(V)$. This makes it more complicated to characterise the isomorphism classes of degenerations in general. 

The $G$-action does not effect the corank of the maps in the tuples and the corank of their concatenations, since the matrices $g_i$ are invertible and by definition their action cancels out for the middle terms of concatenations along paths in the quiver. Hence, each $G$-orbit is characterised by these coranks for one representing map tuple. By Remark~\ref{rem:nilpotent-locus}, the corresponding degenerations can be studied with the methods from the present paper if and only if the map tuple is nilpotent. In the next subsection we give one special type of degenerations, where our methods apply and which is independent of the approximation parameter $\omega$.
\subsection{Partial Degenerations and their Corank Tuples}
In this subsection, we restrict us to intermediate degenerations between the non-degenerate affine flag variety and its degeneration as in Definition~\ref{def:deg-aff-flag}. These degenerations correspond to map tuples in the $G$-orbits of $f \in \mr{End}^{\times n}_{nil}(V)$, where each $f_i$ is either the shifted projection $s_{-1} \circ\mr{pr}_{1}$ or the index shift $s_{-1}$. 

Since degenerations corresponding to map tuples in one $G$-orbit are isomorphic, the intermediate degenerations are completely determined by the corank of the maps $f_i$, which is independent of the approximation parameter $\omega$. Hence it is sufficient to view corank tuples $\mb{c} \in \{1,2\}^{\Z_n}$. The corank tuple $\mb{c}^{(2)} = (2,\dots,2)$ corresponds to the degeneration from Definition~\ref{def:deg-aff-flag}, and the tuple $\mb{c}^{(1)} =(1,\dots,1)$ parametrises the non-degenerate affine flag variety as in Proposition~\ref{prop:alternative_parametrisation}. 

Let $\mathcal{F}l^{\mb{c}}\big(\widehat{\mathfrak{gl}}_n\big)$ be a representative, for the isomorphism class of linear degenerations of the affine flag variety, corresponding to the corank tuple $\mb{c} \in \{1,2\}^{\Z_n}$. In Section~\ref{sec:part-deg-dellac}, it is shown that these degenerations play a special role among the linear degenerations of the affine flag variety. Namely, they are the maximal subclass of degenerations, where it is possible to adapt the parametrisation of cells via affine Dellac configurations.
\begin{trm}
\label{lma:approx-lin-deg-aff-flag}
For $\omega \in \N$ and $\mb{c} \in \{1,2\}^{\Z_n}$ the finite approximation is given as
\[ \mathcal{F}l^{\mb{c}}_{\omega}\big(\widehat{\mathfrak{gl}}_n\big) \cong \mr{Gr}^{\Delta_n}_{\mb{e}_{\omega}}\big(M_{\omega}^\mb{c}\big), \]
where $\mb{e}_{\omega} := \bdim \bigoplus_{i \in \mathbb{Z}_n} U_i(\omega n) = (\omega n)_{i \in \Z_n}$ and for every $i \in \Z_n$ the representation $M_{\omega}^\mb{c}$ contains the summand $U_i(\omega n) \otimes \C^2$ if ${c}_{i}=2$ or $U_{i}(2 \omega n)$ if ${c}_{i}=1$. 
\end{trm}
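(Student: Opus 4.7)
The plan is to reduce the statement to a computation in $\mr{rep}_{\C}(\Delta_n, \mr{I}_{2\omega n})$ via the quiver-Grassmannian model of Theorem~\ref{trm:finite_approx-lin-deg}, and then read off the indecomposable decomposition of the defining representation from its coefficient quiver. A canonical representative of the $G$-orbit determining $\mathcal{F}l^{\mb c}(\widehat{\mathfrak{gl}}_n)$ is the tuple $f = (f_i)_{i \in \Z_n}$ with $f_i = s_{-1} \circ \mr{pr}_1$ when $c_i = 2$ and $f_i = s_{-1}$ when $c_i = 1$, and both components strictly decrease the index of every basis vector $v_k$, so $f \in \mr{End}^{\times n}_{nil}(V)$. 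Theorem~\ref{trm:finite_approx-lin-deg} then gives an isomorphism $\mathcal{F}l^{\mb c}_\omega(\widehat{\mathfrak{gl}}_n) \cong \mr{Gr}^{\Delta_n}_{\mb e_\omega}(N_\omega)$, where $N_\omega$ has vertex spaces $V^{(\omega n)}$ and structure maps $f_i\vert_{V^{(\omega n)}}$; it remains to show $N_\omega \cong M_\omega^{\mb c}$ as $\Delta_n$-representations.

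In the standard basis, $N_\omega$ has coefficient quiver with vertex set $\Z_n \times \{-\omega n + 1, \ldots, \omega n\}$ and arrows $(i, k) \to (i+1, k-1)$ present exactly when $k \geq -\omega n + 2$ and it is not the case that both $c_i = 2$ and $k = 1$. Since each node of this graph has indegree and outdegree at most one, the indecomposable decomposition of $N_\omega$ is determined by the maximal directed paths (segments), which I would enumerate by classifying sources and sinks. A short check shows the sources are the points $(j, \omega n)$ for all $j \in \Z_n$ together with $(j, 0)$ for $j$ with $c_{j-1} = 2$, and dually the sinks are $(j, -\omega n + 1)$ for all $j$ together with $(j, 1)$ for $j$ with $c_j = 2$.

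Tracing each source down-right, a segment starting at $(j, \omega n)$ first reaches row $1$ at column $j - 1$ after $\omega n - 1$ steps; if $c_{j-1} = 2$ the segment dies there with length $\omega n$, otherwise it passes through row $0$ at column $j$ and descends unobstructed to the bottom sink $(j-1, -\omega n + 1)$ with total length $2 \omega n$. A segment starting at $(j, 0)$ (which exists only if $c_{j-1} = 2$) runs straight down to $(j-1, -\omega n + 1)$ with length $\omega n$. All end-vertices are therefore of the form $j - 1$; substituting $i = j - 1$ and invoking the classification of nilpotent indecomposables \cite[Theorem~7.6]{Kirillov2016} assembles the decomposition into
\[ N_\omega \ \cong \bigoplus_{\substack{i \in \Z_n \\ c_i = 2}} U_i(\omega n)^{\oplus 2} \ \oplus \bigoplus_{\substack{i \in \Z_n \\ c_i = 1}} U_i(2\omega n) \ = \ M_\omega^{\mb c}, \]
and the dimension vector on both sides evaluates to $2 \omega n$ at every vertex, matching $\dim V^{(\omega n)}$.

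The only subtle point, which I regard as the main obstacle, is to ensure that when $c_{j-1} = 1$ the long top segment really does absorb what would otherwise have been a separate middle source at $(j, 0)$, so that the contribution at vertex $j - 1$ is a single copy of $U_{j-1}(2 \omega n)$ rather than two copies of $U_{j-1}(\omega n)$. The sources inventory makes this transparent: $(j, 0)$ fails to be a source precisely when $c_{j-1} = 1$, so the dichotomy between the two cases in the segment trace is exhaustive and non-overlapping.
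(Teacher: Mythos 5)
Your argument is correct and follows essentially the same route as the paper: you invoke Theorem~\ref{trm:finite_approx-lin-deg} for the representative $f_i = s_{-1}\circ\mr{pr}_1^{\,c_i-1}$ and then decompose the resulting representation by a source/sink analysis of its coefficient quiver, which is exactly the content of Proposition~\ref{prop:re-param-M_w} read in the opposite direction. The only remark is that your own trace yields segments starting at vertex $j$ under the condition $c_{j-1}=2$, i.e.\ summands $U_{i+1}(\omega n)^{\oplus 2}$ for $c_i=2$ in the paper's starting-vertex labelling of the $U_i(\ell)$ — the same harmless cyclic shift of indices already implicit in the paper's own statement, which does not affect the isomorphism class of the quiver Grassmannian since $\mb{e}_\omega$ is constant.
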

\begin{prop}\label{prop:re-param-M_w}
The quiver representation $M_{\omega}^\mb{c}$ is isomorphic to the quiver representation
\[ \Big( \, \big( V_i:= V^{(\omega n)} \big)_{i \in \Z_n}, \big( \, M_{\alpha_i} := s_{-1} \circ \mr{pr}_{1}^{c_{i}-1} \, \big)_{i \in \Z_n} \,  \Big). \]
\end{prop}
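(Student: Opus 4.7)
The proof reduces to directly decomposing the representation on the right-hand side into indecomposables and matching them against $M_\omega^\mb{c}$ as described in Theorem~\ref{lma:approx-lin-deg-aff-flag}. Both representations are nilpotent by Remark~\ref{rem:nilpot-quiver-rep}, so by \cite[Theorem~1.11]{Kirillov2016} it is enough to check that the indecomposable summands and their multiplicities agree.

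My plan is to fix a basis $\{v_j^{(i)} : -\omega n + 1 \leq j \leq \omega n\}$ for the copy of $V^{(\omega n)}$ over each vertex $i \in \Z_n$, and to read the coefficient quiver off from the formula $M_{\alpha_i} = s_{-1} \circ \mr{pr}_1^{c_i - 1}$. This map sends $v_j^{(i)} \mapsto v_{j-1}^{(i+1)}$ except on its kernel, which consists of $v_{-\omega n + 1}^{(i)}$ (the boundary vector, always killed by the implicit projection back into $V^{(\omega n)}$) together with $v_1^{(i)}$ when $c_i = 2$. The coefficient quiver is therefore a disjoint union of linear segments of the form $v_{\omega n}^{(i)} \to v_{\omega n - 1}^{(i+1)} \to \cdots$ and, when $c_{i-1} = 2$, also $v_0^{(i)} \to v_{-1}^{(i+1)} \to \cdots$; a short image computation shows that these are exactly the basis vectors not in $\mr{im}(M_{\alpha_{i-1}})$, i.e.\ the segment starting points.

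Next I would trace each segment to determine its length and starting vertex. For a segment starting at $v_{\omega n}^{(i)}$, step $k$ lands on $v_{\omega n - k}^{(i+k)}$, so the first chance to hit a non-boundary kernel element is at $k = \omega n - 1$, namely $v_1^{(i+\omega n - 1)} = v_1^{(i-1)}$ (using $\omega n \equiv 0 \pmod n$). This vector is killed iff $c_{i-1} = 2$; the segment therefore has length $\omega n$ in that case and length $2\omega n$ otherwise, giving a summand $U_i(\omega n)$ or $U_i(2\omega n)$ respectively. A segment starting at $v_0^{(i)}$ (present only when $c_{i-1} = 2$) cannot pass through position $1$ and terminates at the boundary $v_{-\omega n + 1}^{(i-1)}$ after $\omega n - 1$ steps, producing another $U_i(\omega n)$ summand. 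Collecting contributions at every vertex reproduces the direct summands of $M_\omega^\mb{c}$ listed in Theorem~\ref{lma:approx-lin-deg-aff-flag}.

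The main subtlety I foresee is the interpretation of $s_{-1}$ as an endomorphism of $V^{(\omega n)}$, since a priori $s_{-1}$ sends $v_{-\omega n + 1}$ out of $V^{(\omega n)}$; the convention in Theorem~\ref{trm:finite_approx-lin-deg} of restricting maps via $f_i|_{V^{(\omega n)}}$ is what forces this boundary vector into the kernel. Once this convention is pinned down, everything else is combinatorial bookkeeping: the cuts at position $1$, controlled by the $c_j = 2$ coordinates, split a length-$2\omega n$ wind-around into two length-$\omega n$ halves, which is precisely the content of the statement.
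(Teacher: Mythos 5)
Your overall strategy is sound and is essentially the paper's argument run in the opposite direction: the paper starts from the abstract decomposition of $M_\omega^{\mb{c}}$ into the summands listed in Theorem~\ref{lma:approx-lin-deg-aff-flag}, arranges their segments into an explicit coefficient quiver, reads off the maps in that arrangement as $s_1\circ\mr{pr}_{\omega n}^{c_i-1}$, and then invokes the base change from the proof of Theorem~\ref{trm:finite_approx-in-the-intro}; you instead take the explicit maps $s_{-1}\circ\mr{pr}_1^{c_i-1}$ on $V^{(\omega n)}$, compute their coefficient quiver, and decompose. Your identification of the kernels ($v_{-\omega n+1}^{(i)}$ always, $v_1^{(i)}$ exactly when $c_i=2$), of the segment sources ($v_{\omega n}^{(i)}$ always, $v_0^{(i)}$ exactly when $c_{i-1}=2$), and of the segment lengths is correct, and your remark about the boundary convention for $s_{-1}$ is exactly the right point to pin down.

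However, the last sentence of your argument does not follow from your own computation, and this is a genuine gap. The segment starting at $v_{\omega n}^{(i)}$ is cut at $v_1^{(i-1)}$ precisely when $c_{i-1}=2$, and the extra segment at $v_0^{(i)}$ exists precisely when $c_{i-1}=2$; so the decomposition you obtain is $\bigoplus_{i\in\Z_n} R_i$ with $R_i=U_i(\omega n)^{\oplus 2}$ if $c_{i-1}=2$ and $R_i=U_i(2\omega n)$ if $c_{i-1}=1$. Theorem~\ref{lma:approx-lin-deg-aff-flag} lists the summand starting at vertex $i$ as governed by $c_i$, not $c_{i-1}$. For non-constant $\mb{c}$ these two representations are not isomorphic (the multiplicities of $U_i(\omega n)$ differ), so ``collecting contributions reproduces the summands'' does not hold as stated; what you actually obtain is $M_\omega^{\mb{c}'}$ for the rotated tuple $c'_i=c_{i-1}$. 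You must either track this shift explicitly and reconcile it --- note that the paper's own realization appears to carry the same off-by-one, since in its arrangement the map along $\alpha_i$ kills the $\omega n$-th point over vertex $i$ precisely when $c_{i+1}=2$ rather than $c_i=2$ --- or observe that the discrepancy amounts only to a cyclic relabelling of the corank tuple, which is harmless for every use made of the proposition, and say so. As written, the final matching step is asserted rather than proved, and that is where your proof breaks.
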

\begin{proof}
For the representation $M_\omega^\mb{c}$, the vector space over each vertex $i \in \Z_n$ has dimension $2\omega n$, which is also the dimension of $V_i$. In the coefficient quiver of $M_{\omega}^\mb{c}$, there are $c_i$ segments starting over the vertex $i \in \Z_n$. 

The first segment is starting in the fist point over the vertex $i$, and in the $k$-th step its arrow goes from the $k$-th point over the vertex $i+k-1$ to the $k+1$-th point over the vertex $i+k$. If $c_i=2$, this segment has length $\omega n$ and there has to be a second segment starting over the same vertex. If $c_i=1$, this segment has length $2\omega n$ and there is no second segment starting over the vertex $i \in \Z_n$. 

Now assume that $c_i=2$. The first segment ends in the $\omega n$-th point over the vertex $i-1$, and it is not possible that there exists an arrow pointing to the $\omega n+1$-th point over the vertex $i$. We choose this point as starting point for the second segment starting over the vertex $i$.  

In the $k$-th step the arrow of this segment goes from the $\omega n +k$-th point over the vertex $i+k-1$ to the $\omega n +k+1$-th point over the vertex $i+k$, and it ends in the $2\omega n$-th point over the vertex $i+n-1=i-1$. With this realisation of the coefficient quiver of $M_{\omega}^\mb{c}$, we have the map \(s_{1} \circ \mr{pr}_{\omega n}^{c_{i}-1},\)
for the arrow $\alpha_i$ from vertex $i$ to vertex $i+1$. The claim follows with the base change as described in the proof of Theorem~\ref{trm:finite_approx-in-the-intro} as given in Section~\ref{sec:iso:fin-approx-quiv-grass}.  
\end{proof}
This yields an alternative parametrisation for the quiver Grassmannian, which can be identified with the approximation of the partial degeneration.
\begin{proof}[Proof of Theorem~\ref{lma:approx-lin-deg-aff-flag}]
Theorem~\ref{trm:finite_approx-lin-deg} applies to the approximations of the partial degenerations and the desired parametrisation of the corresponding quiver representation is given in Proposition~\ref{prop:re-param-M_w}.  
\end{proof}
\subsection{Partial Degenerations of Affine Dellac Configurations}\label{sec:part-deg-dellac}
In this section, we introduce subsets of affine Dellac configurations, which describe the cells in the approximations of the partial degenerate affine flag varieties. For every $i \in \Z_n$, the representation $M_{\omega}^\mb{c}$ contains the summand 
\[ U(i;\omega n) \otimes \C^2 \ \  \mr{if} \ {c}_{i}=2 \quad \quad  \mr{or} \quad \quad U(i;2\omega n) \ \ \mr{if} \ {c}_{i}=1.\]
Recall that for $\omega \in \N$ the finite approximation is given as
\[ \mathcal{F}l^{\mb{c}}_{\omega}\big(\widehat{\mathfrak{gl}}_n\big) \cong \mr{Gr}^{\Delta_n}_{\mb{e}_{\omega}}\big(M_{\omega}^\mb{c}\big), \]
where $\mb{e}_{\omega} := (e_i = \omega n)_{i \in \Z_n}$.  

Since the quiver representation $M_{\omega}^\mb{c}$ is nilpotent, we can apply Theorem~\ref{trm:cell_decomp-approx-lin-deg-aff-flag} to obtain a cellular decomposition of $\mathcal{F}l^{\mb{c}}_{\omega}\big(\widehat{\mathfrak{gl}}_n\big)$. 
By \cite[Proposition~1]{Cerulli2011}, the cells in the approximations are in bijection with successor closed subquivers in the coefficient quiver of $M_\omega^{\mb{c}}$, with $\omega n$ marked points over each vertex. 

Accordingly, these successor closed subquivers are parametrised by the set
\[ \mathcal{C}_{n}^{\mb{c}}(\omega):= \Big\{ \mb{l} := (\ell_{i,k}) \in \bigoplus_{i \in \Z_n} \bigoplus_{k=1}^{c_i} \big[(3-c_i)\omega n \big]_0 : \ \bdim U(\mb{l}) = \mb{e}_{\omega} \Big\}, \]
where 
\[ U(\mb{l}) := \bigoplus_{i \in \Z_n}  \bigoplus_{k=1}^{c_i} U(i;\ell_{i,k}). \]
\begin{defi}
An affine Dellac configuration $\widehat{D} \in \widehat{DC}_n(\omega)$ is $\mb{c}$\f{-degenerate} to the parameter $\mb{c} \in \{1,2\}^{\Z_n}$ if $k_j > 0$ for $j \in [n]$ implies that $k_{j+n}=\omega$ and $p_{j+n} =n$, whenever $c_j=1$. The set of all $\mb{c}$-degenerate affine Dellac configurations is denoted by $\widehat{DC}\,^\mb{c}_n(\omega)$.
\end{defi}
These configurations parametrise the cells in the partial degenerations.
\begin{trm}
\label{trm:bij-cells-lin-deg-aff-dellac-conf}
For $\omega \in \N$ and $\mb{c} \in \{1,2\}^{\Z_n}$, the cells in the approximation $\mathcal{F}l^{\mb{c}}_{\omega}\big(\widehat{\mathfrak{gl}}_n\big)$ of the partial degenerate affine flag variety are in bijection with $\mb{c}$-degenerate affine Dellac configurations to the parameter $\omega$.
\end{trm}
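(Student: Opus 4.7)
The plan is to construct an explicit bijection $\Phi \colon \mathcal{C}_n^{\mb{c}}(\omega) \to \widehat{DC}_n^{\mb{c}}(\omega)$ that specialises, at the fully degenerate corank $\mb{c} = (2,\dots,2)$, to the bijection used in Theorem~\ref{trm:bij-cells-aff-delllac}.

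First I would identify the cells of $\mathcal{F}l^{\mb{c}}_\omega(\widehat{\mathfrak{gl}}_n)$ with $\mathcal{C}_n^{\mb{c}}(\omega)$. By Theorem~\ref{lma:approx-lin-deg-aff-flag} the approximation is the quiver Grassmannian $\mr{Gr}^{\Delta_n}_{\mb{e}_\omega}(M_\omega^{\mb{c}})$, and since $M_\omega^{\mb{c}}$ is nilpotent Theorem~\ref{trm:cell_decomp-approx-lin-deg-aff-flag} equips it with a cellular decomposition whose cells, by \cite[Proposition 1]{Cerulli2011}, are indexed by successor closed subquivers of the coefficient quiver of $M_\omega^{\mb{c}}$ with $\omega n$ marked points over each vertex. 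These subquivers are in bijection with $\mathcal{C}_n^{\mb{c}}(\omega)$: the ambient pair $U_i(\omega n) \otimes \C^2$ at a vertex with $c_i = 2$ contributes two independent sub-segment lengths in $[\omega n]_0$, whereas the single ambient $U_i(2\omega n)$ at a vertex with $c_i = 1$ contributes one length in $[2\omega n]_0$.

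The map $\Phi$ is defined row by row. For $i$ with $c_i = 2$, rows $i$ and $i + n$ encode $\ell_{i,1}$ and $\ell_{i,2}$ via the standard rule $\ell = p + n r$ of Theorem~\ref{trm:bij-cells-aff-delllac}. For $i$ with $c_i = 1$, the length $\ell_{i,1} \in [2\omega n]_0$ is split uniquely as $\ell_{i,1} = a + b$ with $b = \min(\ell_{i,1},\omega n)$ and $a = \max(0, \ell_{i,1} - \omega n)$; the overflow $a$ is recorded in row $i$ and the main $b$ in row $i+n$. Since $a > 0$ forces $b = \omega n$ (hence $k_{i+n} = \omega$ and $p_{i+n} = n$), the image automatically satisfies the $\mb{c}$-degeneracy condition, and the assignment $\ell_{i,1} \mapsto (a,b)$ is a bijection between $[2\omega n]_0$ and $\{(a,b) \in [\omega n]_0^2 : a > 0 \Rightarrow b = \omega n\}$.

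Finally I would verify that $\Phi$ lands in $\widehat{DC}_n^{\mb{c}}(\omega)$ and is invertible. Property~(1) is immediate; property~(3) amounts to $\sum_j \ell_j = \sum_{i,k}\ell_{i,k} = \omega n^2$, which follows from the dimension vector constraint built into $\mathcal{C}_n^{\mb{c}}(\omega)$. Property~(2) and invertibility are handled simultaneously by transporting the argument from the proof of Theorem~\ref{trm:bij-cells-aff-delllac}: after the virtual splitting each cell carries exactly two virtual segments ending at every vertex of $\Delta_n$, and this count is preserved by the segment-move operations that connect any cell to a top-dimensional one, where property~(2) is obtained from the analogue of Lemma~\ref{lma:irr_comp}. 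The step I expect to require the most care is precisely property~(2) in this partial setting, because the number of \emph{actual} end points of sub-segments at a vertex now depends on $\mb{c}$; the key point to check is that the $\mb{c}$-degeneracy condition is exactly what is needed to make the splitting into overflow and main contribute two virtual end points at each vertex, thereby reducing property~(2) to the already known argument from the fully degenerate case.
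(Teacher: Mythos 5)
Your overall strategy coincides with the paper's: the published proof of this theorem consists precisely of the remark that the argument of Theorem~\ref{trm:bij-cells-aff-delllac} carries over because $M_\omega^{\mb{c}}$ still has at most $2n$ indecomposable summands, with at most two ending over each vertex. Your explicit splitting $\ell_{i,1}\mapsto(a,b)$ with $b=\min(\ell_{i,1},\omega n)$ and $a=\max(0,\ell_{i,1}-\omega n)$ at the corank-one vertices is correct, matches the $\mb{c}$-degeneracy condition exactly, and makes precise what the paper leaves implicit; the identification of the cells with $\mathcal{C}_n^{\mb{c}}(\omega)$ is also as in the paper.

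One step would fail as written: you propose to obtain property~(2) from ``the analogue of Lemma~\ref{lma:irr_comp}'' by transporting the segment-move argument that connects an arbitrary cell to a top-dimensional one. No such analogue is available when $\mb{c}$ is non-constant: Lemma~\ref{lma:irr_comp} and Proposition~\ref{prop:hom_ineq} require all indecomposable summands of the ambient representation to have the common length $N=\omega n$, whereas $M_\omega^{\mb{c}}$ mixes summands of lengths $\omega n$ and $2\omega n$, and Section~\ref{sec:lin-deg-higher-corank} of the paper states explicitly that Theorem~\ref{trm:framed-module} and its consequences do not apply outside the extremal coranks. The repair is to bypass top-dimensional cells altogether. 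For a single nonempty segment ending at vertex $v_{\mathrm{end}}$ and starting at vertex $v_{\mathrm{st}}$, the dimension vector $\mb{d}$ of the corresponding indecomposable satisfies $d_v-d_{v+1}=\delta_{v,v_{\mathrm{end}}}-\delta_{v+1,v_{\mathrm{st}}}$, and empty segments contribute $0$ under the convention that the empty segment at $i$ ends at $i$ and starts at $i+1$. Your splitting is additive on dimension vectors (since either $a=0$ or $b=\omega n\equiv 0 \bmod n$) and by construction produces exactly two virtual segments ending over each vertex, so summing over all $2n$ virtual segments gives
\[ \big(\bdim U(\mb{l})\big)_v-\big(\bdim U(\mb{l})\big)_{v+1} \;=\; 2-\#\{\text{virtual segments starting at } v+1\}. \]
Hence the homogeneity of $\bdim U(\mb{l})$, which is already encoded in the definition of $\mathcal{C}_n^{\mb{c}}(\omega)$, is equivalent to exactly two virtual segments starting over each vertex, i.e.\ to property~(2), and property~(3) is the statement that the common value is $\omega n$. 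With this substitution your bijection is complete and needs no input from the classification of irreducible components.
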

The proof of this statement is analogous to the proof of Theorem~\ref{trm:bij-cells-aff-delllac} in the setting with the degeneration as in Definition~\ref{def:deg-aff-flag}. Here it is crucial that the number of indecomposable summands of $M_{\omega}^\mb{c}$ is less or equal to $2n$ and that there are at most two indecomposable summands ending over each vertex of $\Delta_n$. This allows to parametrise the summands by the rows of affine Dellac configurations. This fails for degenerations, where the corank of some $f_i$ is strictly bigger than two. In this case there are at least three summands ending over some vertex of $\Delta_n$ and the structure of the corresponding quiver representations does not match the structure of affine Dellac configurations any more. 
\subsection{Linear Degenerations of higher Corank}\label{sec:lin-deg-higher-corank}
It turns out that the degeneration from Definition~\ref{def:deg-aff-flag} and the non-degenerate affine flag variety are very special, even among the degenerations corresponding to nilpotent endomorphism. In general, it is only possible to apply Theorem~\ref{trm:framed-module}, to the quiver Grassmannians from Theorem~\ref{trm:finite_approx-lin-deg}, in these two cases. This holds since already for the partial degenerations, the indecomposable summands of $M_\omega^{\mb{c}}$ are of different length, apart from the extremal cases $\mb{c}^{(1)}$ and $\mb{c}^{(2)}$. As pointed out in the end of the previous section, this does not get better for linear degenerations of higher corank. Hence we can not apply Lemma~\ref{lma:rat_sing}, to the linear degenerations. Accordingly, with the methods from the present paper, it is not possible to derive any statement about the geometry of the linear degenerations, apart from the cellular decomposition in the nilpotent locus as in Theorem~\ref{trm:cell-decomp-lin-deg}, and the parametrisation of the cells by Dellac configurations for the partial degenerations in Theorem~\ref{trm:bij-cells-lin-deg-aff-dellac-conf}. Thus it would be of great interest to have a better understanding for the geometry of arbitrary quiver Grassmannians for the cycle.
\subsection{Poincar\'e Polynomials of the Approximations of Partial Degenerations}\label{sec:p-poly-lin-deg-aff-flag}
There exists a function 
\begin{align*} 
h^\mb{c}: \widehat{DC}\,^\mb{c}_n(\omega) &\longrightarrow \ \Z\\
D \ \ &\longmapsto \ h^\mb{c}(D)
\end{align*}
such that $h^\mb{c}(D)$ is equal to the complex dimension of the corresponding cell, in the approximation of the partial degenerate affine flag variety \cite[Theorem 6.61]{Pue2019}. 
\begin{trm}
\label{trm:p-poly-lin-deg-aff-flag}
For $\omega \in \N$ and $\mb{c} \in \{0,1\}^n$, the Poincar\'e polynomial of $\mathcal{F}l^{\mb{c}}_{\omega}\big(\widehat{\mathfrak{gl}}_n\big)$ is given by
\[p_{ \mathcal{F}l^{\mb{c}}_{\omega}\big(\widehat{\mathfrak{gl}}_n\big)  }(q) = \sum_{ D \in \widehat{DC}\,^\mb{c}_n(\omega) } q^{h^\mb{c}(D)}.\]
\end{trm}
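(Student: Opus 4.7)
The strategy is to combine three ingredients established earlier: the identification of $\mathcal{F}l^{\mb{c}}_{\omega}\big(\widehat{\mathfrak{gl}}_n\big)$ with a quiver Grassmannian (Theorem~\ref{lma:approx-lin-deg-aff-flag}), its cellular decomposition (Theorems~\ref{trm:cell_decomp-approx-lin-deg-aff-flag} and~\ref{trm:cell-decomp-lin-deg}), and the bijection between its cells and $\mb{c}$-degenerate affine Dellac configurations together with the definition of $h^{\mb{c}}$ as the complex dimension function on the cells (Theorem~\ref{trm:bij-cells-lin-deg-aff-dellac-conf} and \cite[Theorem~6.61]{Pue2019}). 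Given these, the claim reduces to the standard fact that a projective variety with an $\alpha$-partition into locally closed affine cells has Poincar\'e polynomial equal to $\sum q^{\dim_{\C} C_i}$.

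To realise this standard fact in the present setting, I would invoke \cite[Lemma~4.12]{Carrell02}, already used in the proof of Theorem~\ref{trm:cell_decomp-approx-lin-deg-aff-flag}, to order the torus fixed points as $L_1,\dots,L_r$ so that $X_s := \bigsqcup_{j=1}^{s} \mathcal{C}(L_j)$ is closed for every $s \in [r]$. Then I would proceed by induction on $s$ using the long exact sequence in compactly supported cohomology for the open-closed decomposition $X_s = X_{s-1} \sqcup \mathcal{C}(L_s)$. Since $H_c^{\bullet}(\C^d)$ is one-dimensional in degree $2d$ and zero otherwise, the connecting morphisms vanish and each inductive step adds a single class in even degree $2\dim_{\C}\mathcal{C}(L_s)$. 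Consequently all odd Betti numbers of $X_r = \mathcal{F}l^{\mb{c}}_{\omega}\big(\widehat{\mathfrak{gl}}_n\big)$ vanish, and the $2d$-th Betti number equals the number of cells of complex dimension $d$. Projectivity of the quiver Grassmannian identifies $H_c^{\bullet}$ with ordinary $H^{\bullet}$, producing the equality $p_X(q) = \sum_L q^{\dim_{\C}\mathcal{C}(L)}$.

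The remaining step is bookkeeping: combining the bijection of Theorem~\ref{trm:bij-cells-lin-deg-aff-dellac-conf} with the fact that $h^{\mb{c}}(D)$ is by construction the complex dimension of the cell corresponding to $D$, the sum $\sum_{L} q^{\dim_{\C}\mathcal{C}(L)}$ rewrites as $\sum_{D \in \widehat{DC}\,^{\mb{c}}_n(\omega)} q^{h^{\mb{c}}(D)}$, which is the stated formula. The only point that requires genuine care is verifying that $h^{\mb{c}}$ as defined via \cite[Theorem~6.61]{Pue2019} transports correctly along the bijection of Theorem~\ref{trm:bij-cells-lin-deg-aff-dellac-conf}; beyond that the argument is routine and no new geometric input is needed, since the $\alpha$-partition structure of the decomposition has already been secured in Theorem~\ref{trm:cell_decomp-approx-lin-deg-aff-flag}.
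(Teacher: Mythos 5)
Your proposal is correct and follows essentially the same route as the paper: the paper deduces the formula directly from the cellular decomposition (Theorem~\ref{trm:cell_decomp-approx-lin-deg-aff-flag}), the bijection of cells with $\mb{c}$-degenerate affine Dellac configurations (Theorem~\ref{trm:bij-cells-lin-deg-aff-dellac-conf}), and the dimension function $h^{\mb{c}}$ from \cite[Theorem~6.61]{Pue2019}, leaving the standard compactly-supported-cohomology argument for $\alpha$-partitions implicit. You have merely spelled out that standard step, which is a correct and welcome addition rather than a deviation.
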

The precise formula for the function $h^\mb{c}$ is very complicated and only practical for computer programs. Hence we decide to omit this detail and highlight the description of the Poincar\'e polynomials, which is easier to handle in small examples. For the alternative approach, we draw the successor closed subquivers as described in Section~\ref{sec:cell-decomp}, and count the holes below the starting points of the segments as in \cite[Remark~6]{CFFFR2017}. By Theorem~\ref{trm:cell_decomp-approx-lin-deg-aff-flag}, this even works for all quiver Grassmannians associated to nilpotent representations of the cycle. Based on the second approach, we implemented a program to compute the Poincar\'e polynomials of some approximations using SageMath \cite{Sage}. The explicit dimension function $h^\mb{c}$ and the code for the computer program can be found in \cite[Chapter~6.10, Appendix~B]{Pue2019}. The formula based on the dimension function for the affine Dellac configurations might lead to a more efficient computer program, but the algorithmic version based on the coefficient quivers is efficient enough to compute examples for $n \leq 5$ and $\omega \leq 6$.
\section*{Acknowledgements} 
This research was funded by the DFG/RSF project "Geometry and representation theory at the interface of Lie algebras and quivers". Furthermore, I acknowledge the PRIN2017 CUP E8419000480006, and the MIUR Excellence Department Project awarded to the Department of Mathematics, University of Rome Tor Vergata, CUP E83C18000100006. I want to thank M.~Reineke for many very inspiring discussions during the preparation of this work and M.~Lanini for very helpful suggestions to correct a mistake in a previous version. Moreover I am grateful to an anonymous referee for very useful suggestions to optimise the structure of this article.

\end{document}